\newtheorem{definition}{Definition}
\newtheorem{query}{Query}
\title{\textit{Why} the classes P and NP are not well-defined \textit{finitarily}\\ \vspace{+1ex} \large{G\"{o}del's `formally' unprovable, but `numeral-wise' provable, arithmetical formula $[R(x)]$ is \textit{not} algorithmically \textit{computable} as a tautology by \textit{any} Turing machine, whether \textit{deterministic} or \textit{non-deterministic}}}
\author{\normalsize{Bhupinder Singh Anand}}
\date{\footnotesize{Mumbai, India} \\ \footnotesize{bhup.anand@gmail.com} \tiny({\href{https://orcid.org/0000-0003-4290-9549}{https://orcid.org/0000-0003-4290-9549}})} 
\journal{arXived on \today} 
\begin{document}
	
	\maketitle
	
	
	\begin{prelims}
		
		\def\abstractname{Abstract}
		\abstract{We distinguish \textit{finitarily} between algorithmic \textit{verifiability}, and algorithmic \textit{computability}, to show that G\"{o}del's `formally' unprovable, but `numeral-wise' provable, arithmetical proposition $[(\forall x)R(x)]$ can be \textit{finitarily} evidenced as: algorithmically \textit{verifiable} as `always' true, but not algorithmically \textit{computable} as `always' true. Hence, though $[R(x)]$ is algorithmically \textit{verifiable} as a tautology, it is not algorithmically \textit{computable} as a tautology by any Turing machine, whether \textit{deterministic} or \textit{non-deterministic}. By interpreting the P$v$NP problem arithmetically, rather than  set-theoretically, we conclude that the classes P and NP are not well-defined \textit{finitarily} since it immediately follows that SAT $\notin$ P, \textit{and} SAT $\notin$ NP.} \\ \keywords{algorithmic computability, algorithmic verifiability, Peano Arithmetic PA, PvNP, SAT} \\ \MSCclass 00A30, 03A05, 03B10, 03D10, 03D15, 03F25, 03F30, 68Q15, 68Q17, 68T27  \\ \tiny \textbf{DECLARATIONS} $\bullet$ \textbf{Funding}: Not applicable $\bullet$ \textbf{Conflicts of interest/Competing interests}: Not applicable $\bullet$ \textbf{Availability of data and material}: Not applicable $\bullet$ \textbf{Code availability}: Not applicable $\bullet$ \textbf{Authors' contributions}: Not applicable
		
		
	\end{prelims}
	

	\section{Introduction}
	\label{sec:intro.prov.thm.pvnp}
	In this investigation, we highlight a significant entailment for computational complexity of the distinction between algorithmic \textit{verifiability}, and algorithmic \textit{computability}, introduced in the 2012 Turing-centenary paper \cite{An12} presented at the AISB/IACAP World Congress in Birmingham, and the paper \cite{An16} published in the December 2016 issue of \textit{Cognitive Systems Research}:
	
	\begin{quote}
		\footnotesize
		``A number-theoretical relation $F(x)$ is algorithmically verifiable if, and only if, for any given natural number $n$, there is an algorithm $AL_{(F,\ n)}$ which can provide objective evidence$^{18}$ for deciding the truth/falsity of each proposition in the finite sequence $\{F(1), F(2), \ldots, F(n)\}$." \\ \textit{\tiny{\ldots Anand: \cite{An16}, \S 2, \textbf{Definition 1} (\textit{Algorithmic verifiability}).}}
		
		\vspace{+1ex}
		``A number theoretical relation $F(x)$ is algorithmically computable if, and only if, there is an algorithm $AL_{F}$ that can provide objective evidence for deciding the truth/falsity of each proposition in the denumerable sequence $\{F(1), F(2), \ldots\}$." \\ \textit{\tiny{\ldots Anand: \cite{An16}, \S 2, \textbf{Definition 2} (\textit{Algorithmic computability}).}}
		
		\vspace{+1ex}
		``We note that algorithmic computability implies the existence of an algorithm that can finitarily decide the truth/falsity of each proposition in a well-defined denumerable sequence of propositions,$^{19}$ whereas algorithmic verifiability does not imply the existence of an algorithm that can finitarily decide the truth/falsity of each proposition in a well-defined denumerable sequence of propositions.		
		
		\vspace{+1ex}
		\tiny{$^{18}$ cf. Murthy (1991): ``It is by now folklore \ldots that one can view the values of a simple functional language as specifying evidence for propositions in a constructive logic \ldots".
			
		\vspace{+1ex}
		$^{19}$ We note that the concept of `algorithmic computability' is essentially an expression of the more rigorously defined concept of `realizability' in Kleene (1952), p. 503.}"
		
		\vspace{+1ex}
		\textit{\tiny{\ldots Anand: \cite{An16}, \S 2, Defining algorithmic verifiability and algorithmic computability.}}
	\end{quote}
	
	We show that the \textit{finitary} proof of consistency for PA in \cite{An12}, and in \cite{An16}\footnote{\cite{An16}, \textbf{Theorem 6.7}: The axioms of PA are always algorithmically \textit{computable} as true under the interpretation $\mathcal{I}_{PA(\mathbb{N},\ SC)}$ and the rules of inference of PA preserve the properties of algorithmically \textit{computable} satisfaction/truth under $\mathcal{I}_{PA(\mathbb{N},\ SC)}$.}, is a \textit{strong}\footnote{Vis \`{a} vis the \textit{weak} (see \S \ref{sec:pa.axms.true.alg.ver}, Theorem \ref{sec:5.4.lem.5}) proof of consistency for PA in \cite{An16}, \textbf{Theorem 5.6}: The axioms of PA are always algorithmically \textit{verifiable} as true under the interpretation $\mathcal{I}_{PA(\mathbb{N},\ SV)}$ and the rules of inference of PA preserve the properties of algorithmically \textit{verifiable} satisfaction/truth under $\mathcal{I}_{PA(\mathbb{N},\ SV)}$.} proof of consistency (see \S \ref{sec:pa.axms.true.alg.cmp}, Theorem \ref{sec:6.lem.5}). Moreover, it entails a Provability Theorem\footnote{\cite{An16}, \S 7, \textbf{Theorem 7.1} (\textit{Provability Theorem for PA}): A PA formula $[F(x)]$ is PA-provable if, and only if, $[F(x)]$ is algorithmically \textit{computable} as always true in $\mathbb{N}$.} (see \S \ref{sec:6.3.a}, Theorem \ref{sec:6.3.thm.1}) for the first-order Peano Arithmetic PA that bridges the---hitherto only intuitively conjectured (see \S \ref{sec:6.3.a})---gap between arithmetic provability and Turing-computability.
	
	\vspace{+1ex}
	We then show how the Provability Theorem for PA further entails that Kurt G\"{o}del's `formally' unprovable, but `numeral-wise' provable, arithmetical proposition in \cite{Go31}---which is expressible as a PA formula of the form $[(\forall x)R(x)]$\footnote{G\"{o}del \textit{finitarily} defines, and refers to, the formulas $[(\forall x)R(x)]$ and $[R(x)]$ \textit{only} by their G\"{o}del-numbers `$17\ Gen\ r$' and `$r$', respectively, in his seminal 1931 paper \cite{Go31} (in eqns. \#13 and \#12, respectively, on p.25). His ensuing argument of \cite{Go31}, Theorem VI, then entails that:
		
	\begin{enumerate}
		\item[(i)] $[(\forall x)R(x)]$ is not `formally' provable in PA;
		
		\begin{quote}
			``$17\ Gen\ r$ is not $\kappa$-PROVABLE." \textit{\tiny{\ldots G\"{o}del: \cite{Go31}, p.25(1)}};
		\end{quote}
		
		\item[(ii)] for any \textit{specified} numeral $[n]$, the formula $[R(n)]$ is `numeral-wise' provable in PA.
		
		\begin{quote}
		 ``From this, we deduce, according to (15), $(n) Bew_{\kappa}[Sb(r \begin{array}{c} 17 \\ Z(n) \end{array})]$ \ldots" \textit{\tiny{\ldots G\"{o}del: \cite{Go31}, p.26(2)}}.
		\end{quote} 
	\end{enumerate}
	}---\textit{well-defines} (by \S \ref{sec:intro.prov.thm.pvnp}, Definition \ref{def:const.wd.nt.seq}) a quantifier-free PA formula $[R(x)]$ that is algorithmically \textit{verifiable} (by \S \ref{sec:intro.prov.thm.pvnp}, Definition \ref{sec:2.def.1}), but not algorithmically \textit{computable} (by \S \ref{sec:intro.prov.thm.pvnp}, Definition \ref{sec:2.def.2}), as a tautology.
	
	\vspace{+1ex}
	By interpreting the P$v$NP problem arithmetically, rather than set-theoretically (see \S \ref{sec:pvnp.problem}), we thus conclude that SAT \(\notin\) P \textit{and} SAT \(\notin\) NP (\S \ref{sec:lgcl.prf.p.neq.np}, Theorem \ref{thm:sat.prf.pvnp}), since \textit{no} Turing-machine, whether \textit{deterministic} or \textit{non-deterministic}, can \textit{evidence} whether or not $[R(x)]$ is a tautology\footnote{See also:
		
		\begin{enumerate}
			\item[(i)] \cite{An21}, \S 20.E, \textbf{Corollary 20.2}: Although \textit{no} mechanical intelligence can evidence that G\"{o}del's formula $[(\forall x)R(x)]$ is algorithmically \textit{verifiable}, a human intelligence \textit{can} evidence that $[(\forall x)R(x)]$ is algorithmically \textit{verifiable}; and
			
			\item[(ii)] \cite{An21}, \S 20.E, \textbf{Query 21} (Turing Test): Can you prove that, for any well-defined numeral $[n]$, G\"{o}del's arithmetic formula $[R(n)]$ is a theorem in the first-order Peano Arithmetic PA, where $[R(x)]$ is defined by its G\"{o}del number $r$ in eqn.12, and $[(\forall x)R(x)]$ is defined by its G\"{o}del number $17Gen\ r$ in eqn.13, on p.25 of \cite{Go31}? Answer only either `Yes' or `No'.
			
			\vspace{+1ex}
			\textit{Logician}: Yes.
			
			\begin{itemize}
				\item[ ] \textit{Reason}: By G\"{o}del's \textit{meta-mathematical} reasoning on p.26(2) of \cite{Go31}, a logician can conclude that, if a numeral $[n]$ is well-defined, then the formula $[R(n)]$ is a theorem in PA; even though the formula $[(\forall x)R(x)]$ is \textit{not} a theorem in PA.
			\end{itemize}
			
			\textit{Turing Machine}: No.
			
			\begin{itemize}
				\item[ ] \textit{Reason}: By \S \ref{sec:7}, Corollary \ref{sec:6.3.cor.1}, the formula $[\neg (\forall x)R(x)]$ is provable in PA and so, by the Provability Theorem for PA (\S \ref{sec:6.3.a}, Theorem \ref{sec:6.3.thm.1}), \textit{no} Turing machine can \textit{prove} that the formula $[(\forall x)R(x)]$ with G\"{o}del number $17Gen\ r$ is a theorem in PA and, ipso facto, conclude that, for any well-definable numeral $[n]$, G\"{o}del's arithmetic formula $[R(n)]$ is a theorem in PA.
			\end{itemize}			
		\end{enumerate}
	}.

	\section{Revisiting an \textit{evidence-based} paradigm}
	\label{sec:revisiting}
	
	We begin by revisiting the \textit{evidence-based} paradigm introduced in \cite{An12} and the paper \cite{An16}; a paradigm whose philosophical significance is that it pro-actively addresses the challenge\footnote{For a brief review of such challenges, see \cite{Fe06} and \cite{Fe08}; also \cite{An04} and \cite{Fre18}.}  which arises when an intelligence---whether human or mechanistic---accepts arithmetical propositions as \textit{true} under an interpretation---either axiomatically or on the basis of \textit{subjective} self-evidence---\textit{without} any specified methodology for \textit{objectively evidencing} such acceptance \textit{mechanically} in the sense of, for instance, Chetan Murthy and Martin L\"{o}b:
		
		\begin{quote}
			\footnotesize
			``It is by now folklore \ldots that one can view the \textit{values} of a simple functional language as specifying \textit{evidence} for propositions in a constructive logic \ldots" \\ \noindent \textit{\tiny{\ldots Murthy: \cite{Mu91}, \S 1 Introduction.}}
			
			\vspace{+1ex}
			\noindent ``Intuitively we require that for each event-describing sentence, $\phi_{o^{\iota}}n_{\iota}$ say (i.e. the concrete object denoted by $n_{\iota}$ exhibits the property expressed by  $\phi_{o^{\iota}}$), there shall be an algorithm (depending on \textbf{I}, i.e. $M^{*}$) to decide the truth or falsity of that sentence." \\ \noindent \textit{\tiny{\ldots L\"{o}b: \cite{Lob59}, p.165.}}
		\end{quote}
	
	\vspace{+1ex}
	The significance of the \textit{evidence-based} definitions of arithmetical truth, introduced in \cite{An12}, is that the first-order Peano Arithmetic PA---which, by \cite{An16}, Theorem 6.7, is \textit{finitarily} consistent\footnote{As sought (see \cite{Hi00}) by David Hilbert for the second of the twenty-three problems that he highlighted at the International Congress of Mathematicians in Paris in 1900; see also \S \ref{sec:pa.axms.true.alg.cmp}, Theorem \ref{sec:6.thm.2}.}---forms the bedrock for all formal mathematical languages that admit rational and real numbers\footnote{See, for instance, Edmund Landau's classically concise exposition \cite{La29} on the foundations of analysis; see also \cite{An21}, \S 22.C.c: \textit{A finitary perspective of the structure $\mathbb{N}$ of the natural numbers}.}.
	
		\begin{quote}
		\noindent \textbf{Axioms and rules of inference of the first-order Peano Arithmetic PA}\footnote{We use square brackets to differentiate between a symbolic expression $[F(x)]$---which denotes a formula of a formal language $L$---and the symbolic expression $F^{*}(x)$---which denotes its `semantics/meaning' under a \textit{well-defined} interpretation.}
		
		\begin{enumerate}
			\item[\textbf{PA$_{1}$}] $[(x_{1} = x_{2}) \rightarrow ((x_{1} = x_{3}) \rightarrow (x_{2} = x_{3}))]$;
			
			\item[\textbf{PA$_{2}$}] $[(x_{1} = x_{2}) \rightarrow (x_{1}^{\prime} = x_{2}^{\prime})]$;
			
			\item[\textbf{PA$_{3}$}] $[0 \neq x_{1}^{\prime}]$;
			
			\item[\textbf{PA$_{4}$}] $[(x_{1}^{\prime} = x_{2}^{\prime}) \rightarrow (x_{1} = x_{2})]$;
			
			\item[\textbf{PA$_{5}$}] $[( x_{1} + 0) = x_{1}]$;
			
			\item[\textbf{PA$_{6}$}] $[(x_{1} + x_{2}^{\prime}) = (x_{1} + x_{2})^{\prime}]$;
			
			\item[\textbf{PA$_{7}$}] $[( x_{1} \star 0) = 0]$;
			
			\item[\textbf{PA$_{8}$}] $[( x_{1} \star x_{2}^{\prime}) = ((x_{1} \star x_{2}) + x_{1})]$;
			
			\item[\textbf{PA$_{9}$}] For any well-formed formula $[F(x)]$ of PA: \\ $[F(0) \rightarrow (((\forall x)(F(x) \rightarrow F(x^{\prime}))) \rightarrow (\forall x)F(x))]$.
		\end{enumerate}
		
		\textbf{Generalisation in PA} If $[A]$ is PA-provable, then so is $[(\forall x)A]$.
		
		\vspace{+1ex}
		\textbf{Modus Ponens in PA} If $[A]$ and $[A \rightarrow B]$ are PA-provable, then so is $[B]$.		
		\end{quote}
		
	Hence the reliability of any conceptual metaphors\footnote{In the sense of \cite{LR00}; see also \cite{An21}, \S 25: \textit{The significance of \textit{evidence-based} reasoning for Cognitive Science}.} seeking to unambiguously express, and categorically communicate, our observations of physical phenomena which appeal---in their mathematical representations---to mathematical definitions of real numbers, must be circumscribed by whether, or not, PA can be interpreted \textit{categorically}, in some practicable sense\footnote{See \cite{An16}, \textbf{Corollary 7.2}: PA is categorical with respect to algorithmic \textit{computability} (see also \S \ref{sec:6.3.a}, Corollary \ref{cor:pa.categorical}).}, over the domain $\mathbb{N}$ of the natural numbers.
	
	\vspace{+1ex}
	Now conventional wisdom, whilst accepting Alfred Tarski's classical definitions\footnote{See \cite{Ta35}.} of the satisfiability and truth of the formulas of a formal language, under a \textit{well-defined} interpretation, as adequate to the intended purpose, \textit{postulates} that under the classical, putatively \textit{standard}, interpretation  $\mathcal{I}_{PA(\mathbb{N},\ S)}$ of the first-order Peano Arithmetic PA over the domain $\mathbb{N}$ of the natural numbers:
	
	\begin{enumerate}
		\item[(i)] The satisfiability/truth of the \textit{atomic} formulas of PA can be \textit{assumed} as \textit{uniquely} decidable under $\mathcal{I}_{PA(\mathbb{N},\ S)}$;
		
		\item[(ii)] The PA \textit{axioms} can be \textit{assumed} to \textit{uniquely} interpret as satisfied/true under $\mathcal{I}_{PA(\mathbb{N},\ S)}$;
		
		\item[(iii)] The PA \textit{rules of inference}---Generalisation and Modus Ponens---can be \textit{assumed} to \textit{uniquely} preserve such satisfaction/truth under $\mathcal{I}_{PA(\mathbb{N},\ S)}$;
		
		\item[(iv)] Aristotle's \textit{particularisation} can be \textit{assumed} to hold under $\mathcal{I}_{PA(\mathbb{N},\ S)}$.
		
		\begin{definition}[Aristotle's particularisation]
			\label{def:ap}
			If the formula $[\neg (\forall x) \neg F(x)]$\footnote{We note that, in a formal language, the formula `$[(\exists x)F(x)]$' is merely an abbreviation of the formula `$[\neg (\forall x)\neg F(x)]$'.} of a formal first order language L is defined as `true' under an interpretation, then we may always conclude \textit{unrestrictedly} that there must be some \textit{well-definable}, albeit \textit{unspecified}, object $s$ in the domain $D$ of the interpretation such that, if the formula $[F(x)]$ interprets as the relation $F^{*}(x)$ in $D$, then the proposition $F^{*}(s)$ is `true' under the interpretation.
		\end{definition}
		
		\begin{quote}
			\footnotesize			
			\textbf{Comment}. We note that Aristotle's particularisation entails\footnote{See \cite{An21}, \S 8.D., \textbf{Corollary 8.9}: If PA is consistent and Aristotle's particularisation holds over $\mathbb{N}$, then PA is $\omega$-consistent.} the \textit{non-finitary} assumption that if the classical first-order logic FOL\footnote{We take FOL to be a first-order predicate calculus such as the formal system $K$ defined in \cite{Me64}, p.57. For reasons detailed in \cite{An21}, \S 14.H.i (\textit{The misleading entailment of the fixed point theorem}) we prefer \cite{Me64} to \cite{Me15}.} is consistent, then it is $\omega$-consistent; thus we may always interpret the formal expression `$(\exists x)F(x)$' of a formal language under any \textit{well-defined} interpretation of FOL as `There exists an \textit{unspecified} object $s$ in the domain of the interpretation such that $F^{*}(s)$', where the formula $[F(x)]$ of the formal language interprets as $F^{*}(x)$.
			
			\vspace{+1ex}
			The far reaching consequences of the term \textit{unspecified}---in \S \ref{sec:intro.prov.thm.pvnp}, Definition \ref{def:ap}---for Rosser's `extension', in \cite{Ro36}, of G\"{o}del's proof of `undecidability' in \cite{Go31}, are highlighted by \S \ref{sec:7}, Corollary \ref{lem:ruleC.equiv.omega.cons}.
			
			\begin{quote}
			\textbf{$\omega$-consistent}. A formal system S is $\omega$-consistent if, and only if, there is no S-formula $[F(x)]$ for which, first, $[\neg (\forall x)F(x)]$ is S-provable and, second, $[F(a)]$ is S-provable for any \textit{specified} S-term $[a]$.			
			\end{quote}
		
			Moreover\footnote{See also \cite{An16}, p.36, fn.15.}, if Aristotle's particularisation holds in any finitary interpretation of PA over $\mathbb{N}$, then PA is $\omega$-consistent. Now, J. Barkley Rosser's `undecidable' arithmetical proposition in \cite{Ro36} is of the form $[(\forall y)(Q(h,y) \rightarrow (\exists z)(z \leq y \wedge S(h,z)))]$\footnote{See \cite{Ro36}, Theorem II, pp. 233–234; also \cite{Kl52}, Theorem 29, pp. 208–209, and \cite{Me64}, Proposition 3.32, pp. 145–146.}. Thus his `extension' of G\"{o}del's proof of undecidability does not yield a `formally undecidable proposition' in PA if PA is only \textit{simply} consistent; since the proof assumes that Aristotle's particularisation holds when interpreting the existential quantifier `$(\exists z)$' in $[(\forall y)(Q(h,y) \rightarrow (\exists z)(z \leq y \wedge S(h,z)))]$ under a finitary interpretation over $\mathbb{N}$, whence PA is implicitly assumed $\omega$-consistent.
		\end{quote}
	\end{enumerate}
	
	However, we shall see that the seemingly innocent and self-evident \textit{assumption} of \textit{uniqueness} in (i) to (iii) conceals an ambiguity with far-reaching consequences; as, equally if not more so, does the seemingly innocent \textit{assumption} in (iv) which, despite being obviously \textit{non-finitary}, is unquestioningly\footnote{See \cite{An21}, \S 7.B: \textit{Faith-based quantification}.} accepted in classical literature as equally self-evident under any logically unexceptionable interpretation of the classical first-order logic FOL.
	
	\vspace{+1ex}
	The ambiguity is revealed if we note that Tarski's classic definitions (see \S \ref{sec:rvwng.trski.ind.def} and \S \ref{sec:trski.ind.def}\footnote{See also \cite{An16}, \S 3: \textit{Reviewing Tarski's inductive assignment of truth-values under an interpretation}.}) permit both human and mechanistic intelligences to admit \textit{finitary}, i.e., \textit{evidence-based}, definitions of the satisfaction and truth of the \textit{atomic} formulas of PA over the domain $\mathbb{N}$ of the natural numbers in \textit{two} (as detailed in \S \ref{sec:alg.ver.int.pa.n} and \S \ref{sec:alg.cmp.int.pa}), hitherto unsuspected\footnote{However, we note that the \textit{non-finitary} (essentially \textit{faith-based} by \cite{An21}, \S 7.B), set-theoretically expressed, differentiation sought to be made between `intuitively learnable sets' and `intuitively computable sets' in \cite{CGK14}:
		
		\begin{quote}
			\footnotesize
			``We consider the notion of intuitive learnability and its relation to intuitive computability. We briefly discuss the Church's Thesis. We formulate the Learnability Thesis. Further we analyse the proof of the Church's Thesis presented by M. Mostowski. We indicate which assumptions of the Mostowski's argument implicitly include that the Church's Thesis holds. The impossibility of this kind of argument is strengthened by showing that the Learnability Thesis does not imply the Church's Thesis. Specifically, we show a \textit{natural} interpretation of intuitive computability under which intuitively learnable sets are exactly algorithmically learnable but intuitively computable sets form a proper superset of recursive sets." \\ \textit{\tiny{\ldots Czarnecki, Godziszewski, and Kaloci\'{n}ski: \cite{CGK14}, Abstract.}}			
		\end{quote}
	
	 \noindent appears, in intent, to mirror the \textit{finitary} (\textit{evidence-based} by \cite{An21}, \S 7.D), arithmetically-expressed, distinction between `algorithmically verifiable functions/relations' and `algorithmically computable functions/relations' introduced in \cite{An12}.} and essentially different, ways:
 	
	\begin{quote}
		(1a) In terms of \textit{classical} algorithmic \textit{verifiability} (see \S \ref{sec:intro.prov.thm.pvnp}, Definition \ref{sec:2.def.1}); and
		
		\begin{quote}
			\footnotesize
			\textbf{Comment}: `\textit{Classical}' since, as we argue in \S \ref{sec:ambgty.std.int.pa}, the classical, putatively \textit{standard}, interpretation $\mathcal{I}_{PA(\mathbb{N},\ S)}$ of PA (corresponding to Mendelson's `standard model for $S$' in \cite{Me64}, p.107) can be viewed as implicitly appealing to the algorithmic \textit{verifiability} of PA-formulas under the (\textit{well-defined}) interpretation $\mathcal{I}_{PA(\mathbb{N},\ SV)}$\footnote{As detailed in \cite{An16}, \S 5: \textit{The standard `verifiable' interpretation $\mathcal{I}_{PA(N,\ SV)}$ of PA over $N$} (see also \S \ref{sec:alg.ver.int.pa.n}).}.
		\end{quote}
		
		(1b) In terms of \textit{finitary} algorithmic \textit{computability} (see \S \ref{sec:intro.prov.thm.pvnp}, Definition \ref{sec:2.def.2});
	\end{quote}
	
	\noindent where we introduce the following \textit{evidence-based} (finitary) definitions:
	
	\begin{quote}
		\begin{definition}[Algorithmic verifiability]
			\label{sec:2.def.1}
			A number-theoretical relation $F(x)$ is algorithmically verifiable if, and only if, for any specifiable natural number $n$, there is a deterministic algorithm $AL_{(F,\ n)}$ which can provide objective evidence for deciding the truth/falsity of each proposition in the finite sequence $\{F(1), F(2), \ldots, F(n)\}$.
		\end{definition}
	
		\begin{definition}[Well-defined sequence]
			\label{def:const.wd.nt.seq}
			A Boolean number-theoretical sequence $\{F^{*}(1),$ $F^{*}(2),$ $\ldots\}$ is well-defined if, and only if, the number-theoretical relation $F^{*}(x)$ is algorithmically \textit{verifiable}.
		\end{definition}	
	
		\begin{definition}[Effective computability]
			\label{def:effective.computability}
			A number-theoretic function $F^{*}(x_{1}, \ldots, x_{n})$ is effectively computable if, and only if, $F^{*}(x_{1}, \ldots, x_{n})$ is well-defined.
		\end{definition}
		
		\begin{definition}[Integer specifiability]
			\label{sec:intro.def.1}
			An \textit{unspecified} natural number $n$ in $\mathbb{N}$ is \textit{specifiable} if, and only if, it can be explicitly denoted as a PA-numeral by a PA-formula that interprets as an algorithmically computable constant (natural number) in $\mathbb{N}$.
		\end{definition}
		
		\begin{quote}
			\footnotesize
			\textbf{Comment}: The significance of Definition \ref{sec:intro.def.1} is highlighted in \cite{An21}, Theorem 10.3\footnote{\cite{An21}, \S 10.C.a, \textbf{Theorem 10.3}: The Church-Turing Thesis entails Aristotle's particularisation.}.
		\end{quote}
		
		\begin{definition}[Deterministic algorithm]
			\label{def:det.alg}
			A deterministic algorithm is a well-defined mechanical method, such as a Turing machine, that computes a mathematical function which has a unique value for any input in its domain, and the algorithm is a process that produces this particular value as output.		
		\end{definition}
		
		\begin{quote}
			\footnotesize
			\textbf{Comment}: By `deterministic algorithm' we mean a `\textit{realizer}' in the sense of the \textit{Brouwer-Heyting-Kolmogorov} rules (see Stephen Cole Kleene's \cite{Kl52}, p.503-505).	
		\end{quote}
		
		\begin{definition}[Algorithmic computability]
			\label{sec:2.def.2}
			A number theoretical relation $F(x)$ is algorithmically computable if, and only if, there is a deterministic algorithm $AL_{F}$ that can provide objective evidence for deciding the truth/falsity of each proposition in the denumerable sequence $\{F(1), F(2), \ldots\}$.
		\end{definition}
		
		\begin{quote}
			\footnotesize
			\textbf{Comment}: In \cite{An21}, \S 7.G, Theorem 7.2\footnote{Corresponding to \cite{An16}, \textbf{Theorem 2.1}: There are number theoretic functions that are algorithmically \textit{verifiable} but not algorithmically \textit{computable}.}, we show that there are \textit{well-defined} number theoretic Boolean functions that are algorithmically \textit{verifiable} but not algorithmically \textit{computable}; and consider some consequences for the classical Church-Turing Thesis in \cite{An21}, \S 7.H.b.
		\end{quote}
	\end{quote}	

	We can now see the ambiguity in the unqualified \textit{assumption} of an \textit{unspecified} object $s$ in Aristotle's particularisation since, if the PA formula $[(\forall x)F(x)]$\footnote{For ease of exposition we consider, without loss of generality, only the case of a PA-formula with a single variable.} is intended\footnote{See also \cite{An21}, \S 10.C: \textit{Is the PA-formula $[(\forall x)F(x)]$ to be interpreted \textit{weakly} or \textit{strongly}?}.} to be read \textit{weakly} as \textit{`For any specified $x$, $F^{*}(x)$ is decidable as true'} under a \textit{well-defined} interpretation, where the formula $[F(x)]$ interprets as the arithmetical relation $F^{*}(x)$, then it must be consistently interpreted \textit{weakly} in terms of algorithmic \textit{verifiability} as follows:
	
	\begin{quote}
		\begin{definition}[Weak quantification]
			\label{def:ebr.weak.qntfcn}
			A PA formula $[(\forall x)F(x)]$ is algorithmically \textit{verifiable} as true under a \textit{well-defined} interpretation if, and only if, $F^{*}(x)$ is algorithmically \textit{verifiable} as always true.
		\end{definition}	
	\end{quote}
	
	Whereas, if the PA formula $[\neg (\forall x)F(x)]$ is intended to be read \textit{weakly} as \textit{`It is not the case that, for any specified $x$, $F^{*}(x)$ is true'}, then it must be consistently interpreted \textit{weakly} in terms of algorithmic \textit{verifiability} as:
	
	\begin{quote}
		\begin{definition}[Weak negation]
			\label{def:weak.neg}
			The PA formula $[\neg (\forall x)F(x)]$ is algorithmically \textit{verifiable} as true under a \textit{well-defined} interpretation if, and only if, there is no algorithm which will evidence that $[(\forall x)F(x)]$ is algorithmically \textit{verifiable} as always true under the interpretation.
		\end{definition}
	
		\vspace{+1ex}
		\footnotesize
		\textbf{Comment}: We note that \textit{weak negation} implies that $[(\forall x)F(x)]$ is not provable in PA; it \textit{does not}, however, entail that $F^{*}(x)$ is not algorithmically \textit{verifiable} as always true.
	\end{quote}

	Similarly, if $[(\forall x)F(x)]$ is intended to be read \textit{strongly} as \textit{`For all $x$, $F^{*}(x)$ is decidable as true'}, then it must be consistently interpreted \textit{strongly} in terms of algorithmic \textit{computability} as follows:
	
	\begin{quote}
		\begin{definition}[Strong quantification]
			\label{def:ebr.strong.qntfcn}
			A PA formula $[(\forall x)F(x)]$ is algorithmically \textit{computable} as true under a \textit{well-defined} interpretation if, and only if, $F^{*}(x)$ is algorithmically \textit{computable} as always true.
		\end{definition}	
	\end{quote}

	Whilst if the PA formula $[\neg (\forall x)F(x)]$ is intended to be read \textit{strongly} as \textit{`It is not the case that, for all $x$, $F^{*}(x)$ is true'}, then it must be consistently interpreted \textit{strongly} in terms of algorithmic \textit{computability} as:
	
	\begin{quote}
		\begin{definition}[Strong negation]
			\label{def:strong.neg}
			The PA formula $[\neg (\forall x)F(x)]$ is algorithmically \textit{computable} as true under a \textit{well-defined} interpretation if, and only if, there is no algorithm which will evidence that $[(\forall x)F(x)]$ is algorithmically \textit{computable} as always true under the interpretation.
		\end{definition}
	
		\vspace{+1ex}
		\footnotesize
		\textbf{Comment}: We note that \textit{strong negation}, too, implies that $[(\forall x)F(x)]$ is not provable in PA. By \S \ref{sec:6.3.a}, Theorem \ref{sec:6.3.thm.1} (\textit{Provability Theorem for PA}), it \textit{does}, however, entail that $F^{*}(x)$ is not algorithmically \textit{computable} as always true.
	\end{quote}
	
	We note that \textit{strong} algorithmic \textit{computability} implies the existence of an algorithm that can \textit{finitarily} decide the truth/falsity of each proposition in a \textit{well-defined} denumerable sequence of number-theoretical propositions; whereas \textit{weak} algorithmic \textit{verifiability} does not imply the existence of an algorithm that can \textit{finitarily} decide the truth/falsity of each proposition in a \textit{well-defined} denumerable sequence of number-theoretical propositions.

	\subsection{Reviewing Tarski's inductive assignment of truth-values under an interpretation}
	\label{sec:rvwng.trski.ind.def}
	
	The paper \cite{An16} essentially appeals to Definitions \ref{sec:2.def.1} to \ref{def:strong.neg} when addressing standard expositions\footnote{Such as in \cite{Me64}, pp.49-53.} of Tarski's inductive definitions\footnote{See \cite{Ta35}.} on the `satisfiability' and `truth' of the formulas of a formal language under an interpretation where:
	
	\begin{definition}
		\label{sec:5.def.1}
		If $[A]$ is an atomic formula $[A(x_{1}, x_{2}, \ldots, x_{n})]$ of a formal language S, then the denumerable sequence $(a_{1}, a_{2}, \ldots)$ in the domain $\mathbb{D}$ of an interpretation $\mathcal{I}_{S(\mathbb{D})}$ of S satisfies $[A]$ if, and only if:
		
		\begin{enumerate}
			\item[(i)] $[A(x_{1}, x_{2}, \ldots, x_{n})]$ interprets under $\mathcal{I}_{S(\mathbb{D})}$ as a unique relation $A^{*}(x_{1}, x_{2},$ $\ldots, x_{n})$ in $\mathbb{D}$ for any witness $\mathcal{W}_{\mathbb{D}}$ of $\mathbb{D}$;
			
			\item[(ii)] there is a Satisfaction Method that provides objective \textit{evidence}\footnote{In the sense of \cite{Mu91} and \cite{Lob59} (see \S \ref{sec:intro.prov.thm.pvnp}).} by which any witness $\mathcal{W}_{\mathbb{D}}$ of $\mathbb{D}$ can objectively \textbf{define} for any atomic formula $[A(x_{1}, x_{2}, \ldots, x_{n})]$ of S, and any given denumerable sequence $(b_{1}, b_{2}, \ldots)$ of $\mathbb{D}$, whether the proposition $A^{*}(b_{1}, b_{2}, \ldots, b_{n})$ holds or not in $\mathbb{D}$;
			
			\item[(iii)] $A^{*}(a_{1}, a_{2}, \ldots, a_{n})$ holds in $\mathbb{D}$ for any $\mathcal{W}_{\mathbb{D}}$.
		\end{enumerate}
	\end{definition}
	
	\begin {quote}
	\footnotesize
	\textbf{Witness:} From an \textit{evidence-based} perspective, the existence of a `witness' as in (i) above is implicit in the usual expositions of Tarski's definitions.
	
	\vspace{+1ex}
	\textbf{Satisfaction Method:} From an \textit{evidence-based} perspective, the existence of a Satisfaction Method as in (ii) above is also implicit in the usual expositions of Tarski's definitions.
	
	\vspace{+1ex}
	\textbf{An \textit{evidence-based} perspective:} We highlight the word `\textit{\textbf{define}}' in (ii) above to emphasise the \textit{evidence-based} perspective underlying this paper; which is that the concepts of `satisfaction' and `truth' under an interpretation are to be explicitly viewed as objective assignments by a convention that is witness-independent. A Platonist perspective would substitute `decide' for `define', thus implicitly suggesting that these concepts can `exist', in the sense of needing to be discovered by some witness-dependent means---eerily akin to a `revelation'---if the domain $\mathbb{D}$ is $\mathbb{N}$.
\end{quote}

\subsubsection{Tarski's inductive definitions}
\label{sec:trski.ind.def}

The truth values of `satisfaction', `truth', and `falsity' are then \textit{assumed} assignable inductively---whether \textit{finitarily} or \textit{non-finitarily}---to the compound formulas of  a first-order theory S under the interpretation $\mathcal{I}_{S(\mathbb{D})}$ in terms of \textit{only} the satisfiability of the atomic formulas of S over $\mathbb{D}$\footnote{See \cite{Me64}, p.51; \cite{Mu91}.}:

\begin{definition}
\label{sec:5.def.2}
A denumerable sequence $s$ of $\mathbb{D}$ satisfies $[\neg A]$ under $\mathcal{I}_{S(\mathbb{D})}$ if, and only if, $s$ does not satisfy $[A]$;
\end{definition}

\begin{definition}
\label{sec:5.def.3}
A denumerable sequence $s$ of $\mathbb{D}$ satisfies $[A \rightarrow B]$ under $\mathcal{I}_{S(\mathbb{D})}$ if, and only if, either it is not the case that $s$ satisfies $[A]$, or $s$ satisfies $[B]$;
\end{definition}

\begin{definition}
\label{sec:5.def.4}
A denumerable sequence $s$ of $\mathbb{D}$ satisfies $[(\forall x_{i})A]$ under $\mathcal{I}_{S(\mathbb{D})}$ if, and only if, given any denumerable sequence $t$ of $\mathbb{D}$ which differs from $s$ in at most the $i$'th component, $t$ satisfies $[A]$;
\end{definition}

\begin{definition}
\label{sec:5.def.5}
A well-formed formula $[A]$ of $\mathbb{D}$ is true under $\mathcal{I}_{S(\mathbb{D})}$ if, and only if, given any denumerable sequence $t$ of $\mathbb{D}$, $t$ satisfies $[A]$;
\end{definition}

\begin{definition}
\label{sec:5.def.6}
A well-formed formula $[A]$ of $\mathbb{D}$ is false under $\mathcal{I}_{S(\mathbb{D})}$ if, and only if, it is not the case that $[A]$ is true under $\mathcal{I}_{S(\mathbb{D})}$.
\end{definition}

The implicit, non-finitary, assumption of Aristotle's particularisation (\S \ref{sec:intro.prov.thm.pvnp}, Definition \ref{def:ap}) in current mathematical paradigms is evidenced in (V)(ii) of Mendelson's assertion---following his formulation of Tarski's definitions (essentially as above)---that:

\begin{quote}
\footnotesize
``Verification of the following consequences of the definitions above is left to the reader. (Most of the results are also obvious if one wishes to use only the ordinary intuitive understanding of the notions of truth and satisfaction). \ldots

\vspace{+1ex}
(V) (i) A sequence $s$ satisfies $\mathcal{A} \wedge \mathcal{B}$ if and only if $s$ satisfies $\mathcal{A}$ and $s$ satisfies $\mathcal{B}$. A sequence $s$ satisfies $\mathcal{A} \vee \mathcal{B}$ if and only if $s$ satisfies $\mathcal{A}$ or $s$ satisfies $\mathcal{B}$. A sequence $s$ satisfies $\mathcal{A} \equiv \mathcal{B}$ if and only if $s$ satisfies both $\mathcal{A}$ and $\mathcal{B}$ or $s$ satisfies neither $\mathcal{A}$ nor $\mathcal{B}$.

\vspace{+1ex}
(ii) A sequence $s$ satisfies $(Ex_{i})\mathcal{A}$ if and only if there is a sequence $s'$ which differs from $s$ in at most the $i^{th}$ place such that $s'$ satisfies $\mathcal{A}$." \\ \textit{\tiny{\ldots Mendelson: \cite{Me64}, pp.51-52.}}
\end{quote}

\subsubsection{The ambiguity in the classical \textit{standard} interpretation of PA over $\mathbb{N}$}
\label{sec:ambgty.std.int.pa}

Now, the classical \textit{standard} interpretation $\mathcal{I}_{PA(\mathbb{N},\ S)}$ of PA over the domain $\mathbb{N}$ of the natural numbers (corresponding to Mendelson's `standard model for $S$' in \cite{Me64}, p.107) is obtained if, in $\mathcal{I}_{S(\mathbb{D})}$:

\begin{enumerate}
\item[(a)] we define S as PA with the standard first-order predicate calculus FOL as the underlying logic\footnote{Where the string $[(\exists \ldots)]$ is defined as---and is to be treated as an abbreviation for---the PA formula $[\neg (\forall \ldots) \neg]$. We do not consider the case where the underlying logic is Hilbert's formalisation of Aristotle's logic of predicates in terms of his $\varepsilon$-operator (\cite{Hi27}, pp.465-466).};

\item[(b)] we define $\mathbb{D}$ as the set $\mathbb{N}$ of natural numbers;

\item[(c)] we assume for any \textit{atomic} formula $[A(x_{1}, x_{2}, \ldots, x_{n})]$ of PA, and any given sequence  $(b_{1}^{*}, b_{2}^{*}, \ldots, b_{n}^{*})$ of $\mathbb{N}$, that the proposition $A^{*}(b_{1}^{*}, b_{2}^{*}, \ldots, b_{n}^{*})$ is decidable in $\mathbb{N}$;

\item[(d)] we define the witness $\mathcal{W}_{(\mathbb{N},\ S)}$ \textit{informally} as the `mathematical intuition' of a human intelligence for whom, classically, (c) has been \textit{implicitly} accepted as `\textit{objectively decidable}' in $\mathbb{N}$.

\item[(e)] we postulate that Aristotle's particularisation\footnote{Aristotle's particularisation (\S \ref{sec:intro.prov.thm.pvnp}, Definition \ref{def:ap}) entails that a PA formula such as $[(\exists x)F(x)]$ can always be taken to interpret under $\mathcal{I}_{PA(\mathbb{N},\ S)}$ as `There is some natural number $n$ such that $F(n)$ holds in $\mathbb{N}$'.} holds over $\mathbb{N}$.

\vspace{+1ex}
\footnotesize{
	\textbf{Comment}: Clearly, (e) (which, in \cite{Me64}, is implicitly entailed by \cite{Me64}, para (V)(ii), p.52) does not form any part of Tarski's inductive definitions of the satisfaction, and truth, of the formulas of PA under the above interpretation. Moreover, its inclusion makes $\mathcal{I}_{PA(\mathbb{N},\ S)}$ extraneously non-finitary\footnote{As argued by Brouwer in \cite{Br08}; see also \cite{An21}, \S 7.B: \textit{\textit{Faith-based} quantification}.}.
}
\end{enumerate}

We shall show that the implicit acceptance in (d) conceals an ambiguity that needs to be made explicit since:

\begin{lemma}
\label{sec:4.lem.1}
Under the interpretation $\mathcal{I}_{PA(\mathbb{N},\ S)}$, an atomic formula $A^{*}(x_{1}, x_{2}, \ldots, x_{n})$ would be \textit{evidenced} as both algorithmically verifiable and algorithmically computable in $\mathbb{N}$ by any witness $\mathcal{W}_{(\mathbb{N},\ S)}$.
\end{lemma}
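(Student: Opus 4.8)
The plan is to exploit the very restricted syntactic form of the atomic formulas of PA. First I would recall that, by the definition of the language of PA, every atomic formula $[A(x_{1}, \ldots, x_{n})]$ is an equation $[t_{1} = t_{2}]$ between two PA terms, and that every PA term is built by finitely many applications of the successor operation, addition ($+$) and multiplication ($\star$) from the constant $[0]$ and the variables $[x_{1}], \ldots, [x_{n}]$. Under $\mathcal{I}_{PA(\mathbb{N},\ S)}$ these primitive symbols interpret, respectively, as the constant $0 \in \mathbb{N}$, the map $m \mapsto m+1$, and ordinary addition and multiplication on $\mathbb{N}$ --- each of which is computed, for every argument tuple, by a single deterministic Turing machine.

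Next I would argue, by induction on the structure of a PA term $[t(x_{1}, \ldots, x_{n})]$, that its interpretation $t^{*} : \mathbb{N}^{n} \to \mathbb{N}$ is an algorithmically computable function: the base cases ($[0]$ and the variables) are immediate, and the inductive step composes the machines already constructed for the immediate subterms with the fixed machine for the leading operation. Hence, for the atomic formula $[A] = [t_{1} = t_{2}]$, the relation $A^{*}(x_{1}, \ldots, x_{n})$ is $t_{1}^{*}(x_{1}, \ldots, x_{n}) = t_{2}^{*}(x_{1}, \ldots, x_{n})$; and since equality of two natural numbers is decided by a single deterministic algorithm, composing the machines for $t_{1}^{*}$ and $t_{2}^{*}$ with the equality test yields one deterministic algorithm $AL_{A^{*}}$ which, on any tuple $(b_{1}, \ldots, b_{n})$ of $\mathbb{N}$, outputs objective evidence deciding whether $A^{*}(b_{1}, \ldots, b_{n})$ holds. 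This is exactly what is required to decide the truth/falsity of every proposition in the denumerable sequence $\{A^{*}(1), A^{*}(2), \ldots\}$ (under a fixed effective enumeration of $\mathbb{N}^{n}$), so by Definition \ref{sec:2.def.2} the relation $A^{*}$ is algorithmically computable; and, taking the restriction of $AL_{A^{*}}$ to the first $n$ arguments as $AL_{(A^{*},\ n)}$ for each $n$, Definition \ref{sec:2.def.1} gives a fortiori that $A^{*}$ is algorithmically verifiable.

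Finally I would stress that the whole construction is visibly witness-independent: the algorithm $AL_{A^{*}}$ is extracted purely from the syntax of $[A]$ together with the fixed interpretations of $0$, successor, $+$ and $\star$, and makes no appeal to the informally specified ``mathematical intuition'' $\mathcal{W}_{(\mathbb{N},\ S)}$. Indeed, clause (c) in the definition of $\mathcal{I}_{PA(\mathbb{N},\ S)}$ already presupposes that each such atomic proposition is decidable in $\mathbb{N}$, and the Satisfaction Method of Definition \ref{sec:5.def.1}(ii) may here simply be taken to be $AL_{A^{*}}$ itself; hence any witness $\mathcal{W}_{(\mathbb{N},\ S)}$ evidences $A^{*}$ as both algorithmically verifiable and algorithmically computable.

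I do not expect a genuine obstacle: the only place calling for care is the bookkeeping in the inductive step (composition of Turing machines) and the choice of enumeration of $\mathbb{N}^{n}$ so that the phrase ``denumerable sequence $\{A^{*}(1), A^{*}(2), \ldots\}$'' makes literal sense in the several-variable case --- which, as elsewhere in the paper, can be handled by reducing without loss of generality to the one-variable case. The conceptually load-bearing observation, which I would make explicit, is that algorithmic computability trivially subsumes algorithmic verifiability, so the two asserted conclusions are not independent and the proof reduces to producing the single algorithm $AL_{A^{*}}$.
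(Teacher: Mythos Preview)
Your proposal is correct. Your argument for algorithmic \emph{computability} is essentially the paper's own: atomic PA formulas reduce to equations $[c=d]$ between closed terms once numerals are substituted, the term functions are primitive recursive, and equality on $\mathbb{N}$ is decidable by a single algorithm --- this is exactly the content of the paper's Theorem~\ref{sec:5.5.lem.2}, to which the paper's proof of the lemma simply defers for part~(ii).

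Where you diverge is in part~(i). The paper treats verifiability and computability as two independent claims and, for verifiability, forwards to the argument of Theorem~\ref{sec:5.5.lem.1}, which goes via G\"{o}del's primitive recursive proof relation $xBy$: one searches proofs to decide which of $[A]$, $[\neg A]$ is PA-provable. You instead observe --- correctly, and the paper itself notes this elsewhere --- that algorithmic computability trivially entails algorithmic verifiability, so once $AL_{A^{*}}$ is in hand, the restrictions $AL_{(A^{*},\,n)}$ come for free. Your route is more economical and makes the logical dependence between the two halves of the lemma explicit; the paper's route, by keeping the two proofs separate, sets up the parallel structure it later exploits when contrasting $\mathcal{I}_{PA(\mathbb{N},\ SV)}$ with $\mathcal{I}_{PA(\mathbb{N},\ SC)}$.
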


\begin{proof} (i) It follows from the argument in \S \ref{sec:alg.ver.int.pa.n}, Theorem \ref{sec:5.4.lem.1}, that $A^{*}(x_{1}, x_{2}, \ldots, x_{n})$ would be \textit{evidenced} as algorithmically \textit{verifiable} in $\mathbb{N}$ by any witness $\mathcal{W}_{(\mathbb{N},\ S)}$.

\vspace{+1ex}
\noindent (ii) It follows from the argument in \S \ref{sec:alg.cmp.int.pa}, Theorem \ref{sec:5.5.lem.2}, that $A^{*}(x_{1}, x_{2},$ $\ldots, x_{n})$ would be \textit{evidenced} as algorithmically \textit{computable} in $\mathbb{N}$ by any witness $\mathcal{W}_{(\mathbb{N},\ S)}$. The lemma follows.\hfill $\Box$
\end{proof}

\subsection{The \textit{weak}, algorithmically \textit{verifiable}, standard interpretation $\mathcal{I}_{PA(\mathbb{N},\ SV)}$ of PA}
\label{sec:alg.ver.int.pa.n}

We note that conventional wisdom \textit{implicitly} considers, as \textit{the} classical \textit{standard} interpretation $\mathcal{I}_{PA(\mathbb{N},\ S)}$ of PA (corresponding to Mendelson's `standard model for $S$' in \cite{Me64}, p.107), a \textit{weak}, algorithmically \textit{verifiable}, interpretation $\mathcal{I}_{PA(\mathbb{N},\ SV)}$ of PA\footnote{See \cite{An16}, \S 5: \textit{The standard `verifiable' interpretation $\mathcal{I}_{PA(N,\ SV)}$ of PA over $N$}.}, where:

\begin{definition}
\label{sec:5. def.1}
An atomic formula $[A]$ of PA is satisfiable under the interpretation $\mathcal{I}_{PA(\mathbb{N},\ SV)}$ if, and only if, $[A]$ is algorithmically verifiable under $\mathcal{I}_{PA(\mathbb{N},\ SV)}$.
\end{definition}

\noindent It follows that\footnote{See \cite{An16}, \textbf{Theorem 5.1}: \textit{The atomic formulas of PA are algorithmically \textit{verifiable} as true or false under the standard interpretation $\mathcal{I}_{PA(\mathbb{N},\ SV)}$}.}:

\begin{theorem}
\label{sec:5.5.lem.1}
The atomic formulas of PA are algorithmically verifiable as true or false under the algorithmically \textit{verifiable} interpretation $\mathcal{I}_{PA(\mathbb{N},\ SV)}$. 
\end{theorem}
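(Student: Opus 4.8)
The plan is to reduce the statement to the elementary fact that, under $\mathcal{I}_{PA(\mathbb{N},\ SV)}$, the interpretation of every PA term is a Turing-computable (indeed primitive recursive) number-theoretic function, together with the fact that identity on $\mathbb{N}$ is decidable by a deterministic algorithm. By Definition \ref{sec:5. def.1}, an atomic PA formula is satisfiable under $\mathcal{I}_{PA(\mathbb{N},\ SV)}$ precisely when it is algorithmically verifiable under that interpretation, so it suffices to check that every atomic formula, read as a number-theoretic relation, satisfies Definition \ref{sec:2.def.1}.

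First I would note that, by the formation rules of PA, every atomic formula $[A]$ has the shape $[t_{1} = t_{2}]$ with $t_{1}, t_{2}$ terms built from $0$ and the variables by finitely many applications of $'$, $+$ and $\star$; under $\mathcal{I}_{PA(\mathbb{N},\ SV)}$ these letters denote the successor, addition and multiplication functions, and $0$ denotes the natural number $0$. By induction on the structure of a term, the interpretation $t^{*}$ of a term $t$ in the variables $x_{1}, \ldots, x_{n}$ is a composition of these primitive recursive functions, hence is itself a primitive recursive function of $n$ arguments for which some Turing machine, on any input tuple of $\mathbb{N}^{n}$, halts after finitely many steps with the value $t^{*}(b_{1}, \ldots, b_{n})$.

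Next I would make the Satisfaction Method required by Definition \ref{sec:5.def.1} explicit for $[A] \equiv [t_{1} = t_{2}]$: on any tuple $(b_{1}, \ldots, b_{n})$, run the two term-evaluating machines and then test the two resulting numerals for identity, declaring that $A^{*}(b_{1}, \ldots, b_{n})$ holds if and only if the two values coincide; since every step of this procedure is deterministic, the verdict is the same for every witness $\mathcal{W}_{(\mathbb{N},\ SV)}$, so it is a genuine witness-independent assignment. Finally, fixing a primitive recursive enumeration of $\mathbb{N}^{n}$, the algorithm $AL_{(A,\ m)}$ demanded by Definition \ref{sec:2.def.1} is: given a specifiable $m$, enumerate the first $m$ tuples and run the Satisfaction Method on each, returning the finite list of verdicts for $A^{*}$ on those tuples. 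This is a deterministic algorithm that provides objective evidence for deciding the truth/falsity of each proposition in $\{A^{*}(1), \ldots, A^{*}(m)\}$, so $A^{*}$, and hence $[A]$, is algorithmically verifiable; the dual conclusion for $[\neg A]$ follows at once, giving the theorem.

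The load-bearing step here is conceptual rather than computational: being explicit that what is produced is a bona fide deterministic algorithm --- a realizer in the Brouwer-Heyting-Kolmogorov sense of Definition \ref{def:det.alg}, uniform in the finite bound $m$ --- rather than merely appealing to the ``intuitive'' decidability of atomic arithmetic. This is precisely the evidence-based discipline on which the later contrast with algorithmic \textit{computability} (Definition \ref{sec:2.def.2}) turns, so it is worth spelling out with care even though no individual step is hard.
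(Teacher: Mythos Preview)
Your argument is correct, but it is not the route the paper takes. The paper's proof of this theorem does not evaluate terms directly; instead it invokes G\"{o}del's primitive recursive proof predicate $xBy$ and argues that, for a closed atomic instance $[A(b_{1},\ldots,b_{n})]$, one can algorithmically search for a PA-proof of either $[A(b_{1},\ldots,b_{n})]$ or $[\neg A(b_{1},\ldots,b_{n})]$, one of which must exist, thereby evidencing truth or falsity under $\mathcal{I}_{PA(\mathbb{N},\ SV)}$. Your approach---reduce an atomic formula to $[t_{1}=t_{2}]$, observe that each $t_{i}^{*}$ is primitive recursive, compute both values and compare---is more elementary and in fact is essentially the argument the paper itself gives later for the \emph{computability} analogue, Theorem \ref{sec:5.5.lem.2}. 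So what you have actually shown is the stronger statement that atomic formulas are algorithmically \emph{computable} as true or false, from which verifiability follows a fortiori. The paper's $xBy$ route buys an explicit link between truth-under-$\mathcal{I}_{PA(\mathbb{N},\ SV)}$ and PA-provability, which is thematically important for the later Provability Theorem; your route buys directness and avoids any detour through G\"{o}del numbering and proof search.
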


\begin{proof} 
It follows by G\"{o}del's definition of the primitive recursive relation $xBy$\footnote{\cite{Go31}, p. 22(45).}---where $x$ is the G\"{o}del number of a proof sequence in PA whose last term is the PA formula with G\"{o}del-number $y$---that, if $[A]$ is an atomic formula of PA, we can algorithmically verify which one of the PA formulas $[A]$ and $[\neg A]$ is necessarily PA-provable and, ipso facto, true under $\mathcal{I}_{PA(\mathbb{N},\ SV)}$.\hfill $\Box$
\end{proof}

\begin{quote}
\footnotesize
\textbf{Comment}: We note that the interpretation $\mathcal{I}_{PA(\mathbb{N},\ SV)}$ cannot claim to be finitary\footnote{Since it defines a model of PA if, and only if, PA is $\omega$-consistent and so we may always non-finitarily conclude from $[(\exists x)R(x)]$ the existence of some numeral $[n]$ such that $[R(n)]$.}, since it follows from \cite{An16}, Theorem 2.1\footnote{\cite{An16}, \textbf{Theorem 2.1}: There are number theoretic functions that are algorithmically verifiable but not algorithmically computable. See also \cite{An21}, \S 7.G, Theorem 7.2.}, that we cannot conclude finitarily from Tarski's Definition \ref{sec:5.def.1} (in \S \ref{sec:rvwng.trski.ind.def}), and Definitions \ref{sec:5.def.2} to \ref{sec:5.def.6} (in \S \ref{sec:trski.ind.def}), whether or not a quantified PA formula $[(\forall x_{i})R]$ is algorithmically \textit{verifiable} as true under $\mathcal{I}_{PA(\mathbb{N},\ SV)}$ if $[R]$ is algorithmically \textit{verifiable}, but not algorithmically \textit{computable}, under $\mathcal{I}_{PA(\mathbb{N},\ SV)}$\footnote{Although a proof that such a PA formula exists is not obvious, by \cite{An16}, Corollary 8.3, p.42 (see also \S \ref{sec:7}, Corollary \ref{sec:6.3.cor.1.1}), G\"{o}del's `undecidable' arithmetical formula $[R(x)]$ is algorithmically \textit{verifiable}, but not algorithmically \textit{computable}, under the interpretation $\mathcal{I}_{PA(\mathbb{N},\ SV)}$.}.
\end{quote}

\subsubsection{The PA axioms are algorithmically \textit{verifiable} as true under $\mathcal{I}_{PA(\mathbb{N},\ SV)}$}
\label{sec:pa.axms.true.alg.ver}

Defining satisfaction in terms of algorithmic \textit{verifiability} under $\mathcal{I}_{PA(\mathbb{N},\ SV)}$ then entails\footnote{See \cite{An16}, \S 5: \textit{The standard `verifiable' interpretation $\mathcal{I}_{PA(N,\ SV)}$ of PA over $N$}.}:

\begin{lemma}
\label{sec:5.4.lem.1}
The PA axioms PA$_{1}$ to PA$_{8}$ are algorithmically verifiable as true over $\mathbb{N}$ under the interpretation $\mathcal{I}_{PA(\mathbb{N},\ SV)}$.
\end{lemma}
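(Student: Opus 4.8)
The plan is to exploit the fact that each of PA$_{1}$ through PA$_{8}$ --- unlike the induction axiom PA$_{9}$, which is genuinely quantified and falls outside this lemma --- is a \emph{quantifier-free} formula: a truth-functional combination, via $[\neg]$ and $[\rightarrow]$, of \emph{atomic} PA-formulas of the form $[s = t]$, where $s, t$ are PA-terms built from $0$, variables, and the function symbols $[\ ^{\prime}]$, $[+]$, $[\star]$. Since no quantifier occurs, Definitions \ref{sec:5.def.2} and \ref{sec:5.def.3} reduce the question of satisfaction, for any given assignment of values to the (finitely many) free variables, to a finite truth-table computation over the satisfaction values of the atomic constituents.

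First I would invoke Theorem \ref{sec:5.5.lem.1}: under $\mathcal{I}_{PA(\mathbb{N},\ SV)}$ every atomic formula $[s = t]$ is algorithmically \emph{verifiable} as true or false, the deciding procedure being the bounded search through G\"{o}del's primitive recursive relation $xBy$ for a PA-proof of $[s = t]$ or of $[\neg (s = t)]$. Hence, for any specifiable $n$ and any assignment of values $\leq n$ to the free variables of a given axiom $[F]$ among PA$_{1}$--PA$_{8}$, there is a deterministic algorithm that (i) evaluates each atomic subformula of $[F]$ under that assignment by the Theorem \ref{sec:5.5.lem.1} procedure, and (ii) combines these finitely many truth values according to Definitions \ref{sec:5.def.2}--\ref{sec:5.def.3} to yield the satisfaction value of $[F]$ under that assignment. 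Letting the algorithm run over all the (finitely many) assignments with arguments $\leq n$ yields the required $AL_{(F,\ n)}$.

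It then remains to check that $AL_{(F,\ n)}$ always returns ``true'' --- equivalently, that every closed instance of each of PA$_{1}$--PA$_{8}$ interprets as a true proposition in $\mathbb{N}$. For PA$_{5}$--PA$_{8}$ this is immediate from the recursive definitions of successor, addition and multiplication on $\mathbb{N}$ (e.g.\ $(a \star b^{\prime}) = (a \star b) + a$ for all $a, b \in \mathbb{N}$); for PA$_{1}$--PA$_{4}$ it is the routine truth-functional check that the stated implications hold once the equality relation on $\mathbb{N}$ is substituted for $[=]$. Equivalently, and more in keeping with the verifiability framework, each such closed instance is PA-provable (by instantiation of the open axiom), so the truth values that the Theorem \ref{sec:5.5.lem.1} procedure assigns to its atomic subformulas are precisely those for which the truth-functional form of the axiom evaluates to ``true''. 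Hence each of PA$_{1}$--PA$_{8}$ is algorithmically \emph{verifiable} as always true over $\mathbb{N}$ under $\mathcal{I}_{PA(\mathbb{N},\ SV)}$.

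The one point that needs care --- and where I expect whatever friction there is --- is the compatibility of the \emph{provability}-based satisfaction assignment of Theorem \ref{sec:5.5.lem.1} with the arithmetical facts of $\mathbb{N}$: one must be sure that no atomic subformula of an axiom instance gets tagged ``false'' by the $xBy$ search while being arithmetically true, or conversely. This is underwritten by the consistency of PA (at most one of $[s = t]$, $[\neg (s = t)]$ is PA-provable) together with PA's completeness for closed atomic equalities (at least one of them is PA-provable); with both in hand, the truth-functional bookkeeping for PA$_{1}$--PA$_{8}$ goes through without obstruction.
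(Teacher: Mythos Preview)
Your proposal is correct and follows essentially the same route as the paper: observe that PA$_{1}$--PA$_{8}$ are quantifier-free, invoke Theorem \ref{sec:5.5.lem.1} for the atomic constituents, and propagate satisfaction through the propositional connectives via Tarski's Definitions \ref{sec:5.def.2}--\ref{sec:5.def.6}. The paper's proof is terser---it simply notes that the axioms ``interpret as recursive relations that do not involve any quantification'' and then cites Theorem \ref{sec:5.5.lem.1} and the Tarski definitions---whereas you spell out the algorithm $AL_{(F,\,n)}$ explicitly and, usefully, address the point the paper leaves implicit: that the verifiability procedure returns \emph{true} (not merely a definite value) for every instance, which you ground in PA's completeness and consistency for closed atomic equalities.
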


\begin{proof}
Since $[x+y]$, $[x \star y]$, $[x = y]$, $[{x^{\prime}}]$ are defined recursively\footnote{cf. \cite{Go31}, p.17.}, the PA axioms PA$_{1}$ to PA$_{8}$ (see \S \ref{sec:intro.prov.thm.pvnp}) interpret as recursive relations that do not involve any quantification. The lemma follows from \S \ref{sec:alg.ver.int.pa.n}, Theorem \ref{sec:5.5.lem.1}, Tarski's Definition \ref{sec:5.def.1} (in \S \ref{sec:rvwng.trski.ind.def}), and Tarski's Definitions \ref{sec:5.def.2} to \ref{sec:5.def.6} (in \S \ref{sec:trski.ind.def}).\hfill $\Box$
\end{proof}

\begin{lemma}
\label{sec:5.4.lem.2}
For any given PA formula $[F(x)]$, the Induction axiom schema $[F(0)$ $\rightarrow (((\forall x)(F(x)$ $\rightarrow F(x^{\prime}))) \rightarrow (\forall x)F(x))]$ interprets as an algorithmically verifiable true formula under $\mathcal{I}_{PA(\mathbb{N},\ SV)}$.
\end{lemma}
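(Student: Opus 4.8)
The plan is to evaluate the Induction instance directly through Tarski's inductive definitions under $\mathcal{I}_{PA(\mathbb{N},\ SV)}$, reduce its truth to a claim about the interpreted relation $F^{*}(x)$, and then discharge that claim by ordinary induction over $\mathbb{N}$ in the meta-theory. First I would record, from Theorem \ref{sec:5.5.lem.1}, Lemma \ref{sec:5.4.lem.1} and the Tarski Definitions \ref{sec:5.def.1} to \ref{sec:5.def.6}, that the given PA formula $[F(x)]$ interprets under $\mathcal{I}_{PA(\mathbb{N},\ SV)}$ as a number-theoretical relation $F^{*}(x)$ which is algorithmically \textit{verifiable} in the sense of Definition \ref{sec:2.def.1}, so that the Boolean sequence $\{F^{*}(1), F^{*}(2), \ldots\}$ is \textit{well-defined} by Definition \ref{def:const.wd.nt.seq}; in particular $F^{*}(0)$, and each of the propositions $F^{*}(n)$ and $F^{*}(n) \rightarrow F^{*}(n+1)$, has a determinate truth value in $\mathbb{N}$, the algorithm $AL_{(F,\ n)}$ witnessing verifiability being permitted to depend on $n$.

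Writing the instance as an implication with $P$ the formula $[F(0)]$, $Q$ the formula $[(\forall x)(F(x) \rightarrow F(x^{\prime}))]$ and $R$ the formula $[(\forall x)F(x)]$, I would then apply Definition \ref{sec:5.def.3} twice: a denumerable sequence of $\mathbb{N}$ satisfies the instance under $\mathcal{I}_{PA(\mathbb{N},\ SV)}$ if, and only if, either $F^{*}(0)$ does not hold, or $Q$ is not satisfied, or $R$ is satisfied. By Definition \ref{sec:5.def.4} (and, since $F^{*}$ is algorithmically \textit{verifiable}, Definition \ref{def:ebr.weak.qntfcn}), ``$Q$ is not satisfied'' amounts to ``it is not the case that $F^{*}(n) \rightarrow F^{*}(n+1)$ for every $n \in \mathbb{N}$'', and ``$R$ is satisfied'' amounts to ``$F^{*}(n)$ holds for every $n \in \mathbb{N}$''. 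Since $F^{*}(x)$ is determinate at every $n$, the classical principle of induction over $\mathbb{N}$ gives: if $F^{*}(0)$ holds and $F^{*}(n) \rightarrow F^{*}(n+1)$ for every $n$, then $F^{*}(n)$ holds for every $n$ --- so at least one of the three disjuncts always obtains, and the instance interprets as true under $\mathcal{I}_{PA(\mathbb{N},\ SV)}$ by Definition \ref{sec:5.def.5}.

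It remains to upgrade ``true'' to ``algorithmically \textit{verifiable} as true''. Here I would note that $[F(0) \rightarrow (((\forall x)(F(x) \rightarrow F(x^{\prime}))) \rightarrow (\forall x)F(x))]$ is a \textit{closed} PA formula --- $x$ occurs only bound in $Q$ and $R$, and $[F(0)]$ has no free occurrence of $x$ --- so its interpretation is a single proposition whose truth value, by the previous paragraph, is constantly \textit{true}; the constant algorithm returning this value therefore witnesses algorithmic \textit{verifiability} in the sense of Definition \ref{sec:2.def.1}. Equivalently, one may route the case analysis through Definitions \ref{def:ebr.weak.qntfcn} and \ref{def:weak.neg}: whichever disjunct obtains supplies the algorithm required to evidence the instance as algorithmically \textit{verifiable} as true. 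This completes, for PA$_{9}$, the step begun for PA$_{1}$ to PA$_{8}$ in Lemma \ref{sec:5.4.lem.1}.

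The main obstacle is not the computation but keeping the argument honest about its \textit{finitary} status. Deciding the dichotomy ``$F^{*}(n) \rightarrow F^{*}(n+1)$ for all $n$, or not'' is non-finitary, and the passage to ``$F^{*}(n)$ for all $n$'' is genuinely non-constructive; this is precisely why the argument delivers only algorithmic \textit{verifiability} and must \emph{not} be read as establishing algorithmic \textit{computability} --- which would demand a single algorithm uniform in $n$, exactly what later fails for G\"{o}del's $[R(x)]$. The appeal to a non-finitary dichotomy is nonetheless legitimate at this stage, since $\mathcal{I}_{PA(\mathbb{N},\ SV)}$ is itself not a finitary interpretation (cf. the Comment following Theorem \ref{sec:5.5.lem.1}).
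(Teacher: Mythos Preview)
Your proposal is correct and follows essentially the same route as the paper: a three-way case split on the antecedents $[F(0)]$ and $[(\forall x)(F(x)\rightarrow F(x'))]$, with meta-theoretic induction over $\mathbb{N}$ handling the non-trivial case where both are algorithmically verifiable as true. The only presentational difference is that the paper, in its step (d), phrases the inductive passage directly as the construction of an algorithm $AL_{(F,n)}$ that evidences each of $\{F(0),\ldots,F(n)\}$ as true---thereby exhibiting the witness for Definition~\ref{sec:2.def.1} explicitly---whereas you first establish classical truth of the implication and then invoke closedness of the sentence to supply a constant witnessing algorithm; both arrive at the same place, and your closing caveat about the non-finitary dichotomy is exactly the point the paper makes in its Comment following the proof. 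One small caution: $[F(x)]$ may carry free variables other than $x$, so the instance need not be literally closed; your argument still goes through uniformly in those parameters, but the ``constant algorithm'' remark should be read accordingly.
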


\begin{proof} We note that, by Tarski's Definition \ref{sec:5.def.1} (in \S \ref{sec:rvwng.trski.ind.def}), and Definitions \ref{sec:5.def.2} to \ref{sec:5.def.6} (in \S \ref{sec:trski.ind.def}):

\begin{enumerate}	
	\item[(a)] If $[F(0)]$ interprets as an algorithmically \textit{verifiable} false formula under $\mathcal{I}_{PA(\mathbb{N},\ SV)}$, the lemma is proved.
	
	\begin{quote}
		\footnotesize \textit{Reason}: Since $[F(0) \rightarrow (((\forall x)(F(x) \rightarrow F(x^{\prime}))) \rightarrow (\forall x)F(x))]$ interprets as an algorithmically \textit{verifiable} true formula under $\mathcal{I}_{PA(\mathbb{N},\ SV)}$ if, and only if, either $[F(0)]$ interprets as an algorithmically \textit{verifiable} false formula, or $[((\forall x)(F(x) \rightarrow F(x^{\prime}))) \rightarrow (\forall x)F(x)]$ interprets as an algorithmically \textit{verifiable} true formula, under under $\mathcal{I}_{PA(\mathbb{N},\ SV)}$.
	\end{quote}
	
	\item[(b)] If $[F(0)]$ interprets as an algorithmically \textit{verifiable} true formula, and $[(\forall x)(F(x) \rightarrow F(x^{\prime}))]$ interprets as an algorithmically \textit{verifiable} false formula, under $\mathcal{I}_{PA(\mathbb{N},\ SV)}$, the lemma is proved.
	
	\item[(c)] If $[F(0)]$ and $[(\forall x)(F(x) \rightarrow F(x^{\prime}))]$ both interpret as algorithmically \textit{verifiable} true formulas under $\mathcal{I}_{PA(\mathbb{N},\ SV)}$ then, for any \textit{specified} natural number $n$, there is an algorithm which (by Definition \ref{sec:2.def.1}) will evidence that $[F(n) \rightarrow F(n^{\prime})]$ is an algorithmically \textit{verifiable} true formula under $\mathcal{I}_{PA(\mathbb{N},\ SV)}$.
	
	\item[(d)] Since $[F(0)]$ interprets as an algorithmically \textit{verifiable} true formula under $\mathcal{I}_{PA(\mathbb{N},\ SV)}$, it follows, for any \textit{specified} natural number $n$, that there is an algorithm which will evidence that each of the formulas in the finite sequence $\{[F(0),\ F(1),\ \ldots,\ F(n)\}]$ is an algorithmically \textit{verifiable} true formula under the interpretation.
	
	\item[(e)] Hence $[(\forall x)F(x)]$ is an algorithmically \textit{verifiable} true formula under $\mathcal{I}_{PA(\mathbb{N},\ SV)}$.
\end{enumerate}

\noindent Since the above cases are exhaustive, the lemma follows.\hfill $\Box$
\end{proof}

\begin{quote}
\footnotesize
\textbf{Comment}: We note that if $[F(0)]$ and $[(\forall x)(F(x) \rightarrow F(x^{\prime}))]$ both interpret as algorithmically verifiable true formulas under $\mathcal{I}_{PA(\mathbb{N},\ SV)}$, then we can only conclude that, for any \textit{specified} natural number $n$, there is an algorithm, say TM$_{(F, n)}$, which will give evidence for any $m \leq n$ that the formula $[F(m)]$ is true under $\mathcal{I}_{PA(\mathbb{N},\ S)}$.

\vspace{+.5ex}
We cannot conclude that there is an algorithm TM$_{F}$ which, for any \textit{specified} natural number $n$, will give evidence that the formula $[F(n)]$ is true under $\mathcal{I}_{PA(\mathbb{N},\ S)}$.
\end{quote}

\begin{lemma}
\label{sec:5.4.lem.3}
Generalisation preserves algorithmically verifiable truth under $\mathcal{I}_{PA(\mathbb{N},\ SV)}$.
\end{lemma}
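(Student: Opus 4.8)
The plan is to reduce the lemma to Tarski's inductive clauses for the universal quantifier and for truth, observing that the algorithmically \textit{verifiable} interpretation $\mathcal{I}_{PA(\mathbb{N},\ SV)}$ differs from the classical $\mathcal{I}_{PA(\mathbb{N},\ S)}$ only at the \textit{atomic} level (Definition \ref{sec:5. def.1}), which the Generalisation step does not disturb; so the familiar soundness argument for Generalisation carries over verbatim, with ``satisfies'' now read in the algorithmically \textit{verifiable} sense supplied by Definition \ref{sec:5. def.1} together with Tarski's Definitions \ref{sec:5.def.2}--\ref{sec:5.def.6}. Concretely, what must be shown is that if $[A]$ is algorithmically \textit{verifiable} as true under $\mathcal{I}_{PA(\mathbb{N},\ SV)}$, then so is $[(\forall x_{i})A]$.

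First I would invoke Tarski's Definition \ref{sec:5.def.5}: the hypothesis that $[A]$ is algorithmically \textit{verifiable} as true under $\mathcal{I}_{PA(\mathbb{N},\ SV)}$ is exactly the statement that \textit{every} denumerable sequence $s$ of $\mathbb{N}$ satisfies $[A]$ under $\mathcal{I}_{PA(\mathbb{N},\ SV)}$. Then, fixing an arbitrary denumerable sequence $s$, Tarski's Definition \ref{sec:5.def.4} tells us that $s$ satisfies $[(\forall x_{i})A]$ if, and only if, every denumerable sequence $t$ differing from $s$ in at most the $i$-th component satisfies $[A]$; and by the hypothesis every sequence --- in particular every such $t$ --- does satisfy $[A]$. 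Hence $s$ satisfies $[(\forall x_{i})A]$, and since $s$ was arbitrary, Definition \ref{sec:5.def.5} yields that $[(\forall x_{i})A]$ is true, i.e.\ algorithmically \textit{verifiable} as true, under $\mathcal{I}_{PA(\mathbb{N},\ SV)}$.

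To keep the ``algorithmically \textit{verifiable}'' bookkeeping explicit rather than buried inside the word ``satisfies'', I would also give the equivalent reading through Definition \ref{def:ebr.weak.qntfcn}: writing $[A]$ as $[F(x_{i})]$, interpreting as the relation $F^{*}(x_{i})$, the assumption that $[F(x_{i})]$ is algorithmically \textit{verifiable} as true amounts to $F^{*}(x_{i})$ being algorithmically \textit{verifiable} as \textit{always} true, which is precisely the condition under which $[(\forall x_{i})F(x_{i})]$ is algorithmically \textit{verifiable} as true. The degenerate cases --- $x_{i}$ not occurring free in $[A]$, or $[A]$ carrying further free variables --- are absorbed without extra work by the same sequence-based argument.

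The one point that needs care --- and the only place a proof of this kind can go astray --- is not to conflate this \textit{preservation} claim with the genuinely \textit{non-finitary} fact flagged in the comment following Theorem \ref{sec:5.5.lem.1}: namely, that one cannot \textit{finitarily} always \textit{decide} whether a quantified PA formula is algorithmically \textit{verifiable} as true when its matrix is algorithmically \textit{verifiable} but not algorithmically \textit{computable}. The lemma asserts only the conditional, ``\textit{if} $[A]$ is algorithmically \textit{verifiable} as true \textit{then} $[(\forall x_{i})A]$ is'', and this conditional holds unrestrictedly by the Tarski clauses above; so no appeal to algorithmic \textit{computability}, and in particular none to Aristotle's particularisation, is required anywhere in the proof --- the argument stays entirely within the \textit{weak}, algorithmically \textit{verifiable}, reading of the quantifiers.
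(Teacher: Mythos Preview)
Your proposal is correct, and your third paragraph is precisely the paper's own argument: the paper simply observes that the two meta-assertions ``$[F(x)]$ interprets as an algorithmically \textit{verifiable} true formula under $\mathcal{I}_{PA(\mathbb{N},\ SV)}$'' and ``$[(\forall x)F(x)]$ interprets as an algorithmically \textit{verifiable} true formula under $\mathcal{I}_{PA(\mathbb{N},\ SV)}$'' both unpack to ``$[F(x)]$ is algorithmically verifiable as always true under $\mathcal{I}_{PA(\mathbb{N},\ SV)}$'', and stops there. Your sequence-level unpacking via Definitions \ref{sec:5.def.4}--\ref{sec:5.def.5} in the second paragraph and the cautionary remark in the fourth are sound elaborations, but the core is identical to the paper's one-line proof.
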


\begin{proof}
The two meta-assertions:

\vspace{+1ex}
`$[F(x)]$ interprets as an algorithmically \textit{verifiable} true formula under $\mathcal{I}_{PA(\mathbb{N},\ SV)}$\footnote{See Definition \ref{sec:5.def.5} (in \S \ref{sec:trski.ind.def})}'

\vspace{+1ex}
\noindent and

\vspace{+1ex}
`$[(\forall x)F(x)]$ interprets as an algorithmically \textit{verifiable} true formula under $\mathcal{I}_{PA(\mathbb{N},\ SV)}$'

\vspace{+1ex}
\noindent both mean:

\vspace{+1ex}
$[F(x)]$ is algorithmically verifiable as always true under $\mathcal{I}_{PA(\mathbb{N},\ SV)}$. \hfill $\Box$
\end{proof}

\noindent It is also straightforward to see that:

\begin{lemma}
\label{sec:5.4.lem.4}
Modus Ponens preserves algorithmically verifiable truth under $\mathcal{I}_{PA(\mathbb{N},\ SV)}$. \hfill $\Box$
\end{lemma}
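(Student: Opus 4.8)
The plan is to reproduce, for Modus Ponens, the bookkeeping already used for Generalisation in Lemma \ref{sec:5.4.lem.3}, reducing everything to the remark that a finite union of deterministic algorithms is again a deterministic algorithm. First I would unpack the hypotheses via Tarski's Definitions \ref{sec:5.def.3} and \ref{sec:5.def.5} together with Lemma \ref{sec:5.4.lem.3}: to say that $[A]$ and $[A \rightarrow B]$ each ``interpret as an algorithmically \textit{verifiable} true formula under $\mathcal{I}_{PA(\mathbb{N},\ SV)}$'' is to say (treating, without loss of generality, the single-free-variable case, exactly as elsewhere in the paper) that the relations $A^{*}(x)$ and $(A \rightarrow B)^{*}(x)$ are each algorithmically \textit{verifiable} as always true; equivalently, by Definition \ref{sec:2.def.1}, that for every specifiable $n$ there are deterministic algorithms $AL_{(A,\ n)}$ and $AL_{(A \rightarrow B,\ n)}$ which correctly decide each proposition in $\{A^{*}(1), \ldots, A^{*}(n)\}$ and in $\{(A \rightarrow B)^{*}(1), \ldots, (A \rightarrow B)^{*}(n)\}$ respectively, all of these propositions being true.

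Next I would fix an arbitrary specifiable $n$ and construct the required algorithm $AL_{(B,\ n)}$ for $B^{*}(x)$. On input $i \leq n$ it runs $AL_{(A,\ n)}$ and $AL_{(A \rightarrow B,\ n)}$ on $i$; by hypothesis both return ``true'', so $A^{*}(i)$ holds and $(A \rightarrow B)^{*}(i)$ holds in $\mathbb{N}$. Since $[B]$ is a subformula of $[A \rightarrow B]$, any denumerable sequence of $\mathbb{N}$ that satisfies both $[A]$ and $[A \rightarrow B]$ must, by Tarski's Definition \ref{sec:5.def.3}, also satisfy $[B]$; applied at the relevant coordinate this yields that $B^{*}(i)$ holds, whence $AL_{(B,\ n)}$ outputs ``true''. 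Being the serial composition of two deterministic algorithms followed by a fixed propositional inference step, $AL_{(B,\ n)}$ is itself a deterministic algorithm in the sense of Definition \ref{def:det.alg}. As $n$ was arbitrary, $B^{*}(x)$ is algorithmically \textit{verifiable} as always true, i.e.\ $[B]$ interprets as an algorithmically \textit{verifiable} true formula under $\mathcal{I}_{PA(\mathbb{N},\ SV)}$, which is the lemma.

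I do not expect a genuine obstacle here---which is why the statement can be asserted as ``straightforward''---but the one place needing care is the passage through Tarski's inductive clauses when $[A]$ or $[B]$ is itself a quantified formula, so that ``algorithmically \textit{verifiable} true'' is being read in the extended sense of the Comment following Theorem \ref{sec:5.5.lem.1} rather than directly through Definition \ref{sec:2.def.1}. There one must check that the decomposition of the satisfaction of $[A \rightarrow B]$ into those of $[A]$ and $[B]$ (Definition \ref{sec:5.def.3}) still permits the verifying procedure for $[B]$ to be assembled from those for $[A]$ and $[A \rightarrow B]$, and, crucially, that no appeal to Aristotle's particularisation (Definition \ref{def:ap}) is smuggled in along the way---the construction above stays entirely on the ``always true'' side and so avoids it.
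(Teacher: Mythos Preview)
Your proposal is correct, and in fact supplies strictly more than the paper does: the paper gives no proof at all here, merely prefacing the lemma with ``It is also straightforward to see that:'' and closing it with a $\Box$. Your explicit assembly of $AL_{(B,\ n)}$ from $AL_{(A,\ n)}$ and $AL_{(A \rightarrow B,\ n)}$, together with the appeal to Tarski's Definition~\ref{sec:5.def.3}, is exactly the unpacking the paper leaves to the reader, and it matches the style of the surrounding arguments (particularly Lemma~\ref{sec:5.4.lem.2}). Your closing caveat about the quantified case and the avoidance of Aristotle's particularisation is apt but, as you suspect, not an obstruction: the argument stays on the satisfaction side of Definition~\ref{sec:5.def.3} throughout.
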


\noindent We thus have that:

\begin{theorem}
\label{sec:5.4.lem.5}
The axioms of PA are always algorithmically verifiable as true under the interpretation $\mathcal{I}_{PA(\mathbb{N},\ SV)}$, and the rules of inference of PA preserve the properties of algorithmically verifiable satisfaction/truth under $\mathcal{I}_{PA(\mathbb{N},\ SV)}$.\hfill $\Box$
\end{theorem}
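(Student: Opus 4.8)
The plan is to assemble the theorem directly from the four preceding lemmas, since the statement is simply their conjunction packaged for later reference. First I would partition the PA axioms into the eight non-inductive schemas PA$_{1}$--PA$_{8}$ and the induction schema PA$_{9}$. For the former I would invoke Lemma \ref{sec:5.4.lem.1}: because $[x+y]$, $[x \star y]$, $[x = y]$ and $[x^{\prime}]$ are introduced recursively, each of PA$_{1}$--PA$_{8}$ interprets over $\mathbb{N}$ as a quantifier-free recursive relation, so Theorem \ref{sec:5.5.lem.1} together with Tarski's Definitions \ref{sec:5.def.1}--\ref{sec:5.def.6} gives algorithmic \emph{verifiability} of its truth under $\mathcal{I}_{PA(\mathbb{N},\ SV)}$. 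For PA$_{9}$ I would cite Lemma \ref{sec:5.4.lem.2}, whose exhaustive case split --- $[F(0)]$ verifiably false; $[F(0)]$ verifiably true but $[(\forall x)(F(x) \rightarrow F(x^{\prime}))]$ verifiably false; or both verifiably true, in which case the finite sequence $\{[F(0)],\ [F(1)],\ \ldots,\ [F(n)]\}$ is verifiably true for each \textit{specified} $n$ and hence $[(\forall x)F(x)]$ is algorithmically \emph{verifiable} as always true --- covers every possibility for any given PA formula $[F(x)]$.

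Next I would handle the two rules of inference. Generalisation is covered by Lemma \ref{sec:5.4.lem.3}: the meta-assertions ``$[F(x)]$ interprets as an algorithmically \emph{verifiable} true formula under $\mathcal{I}_{PA(\mathbb{N},\ SV)}$'' and ``$[(\forall x)F(x)]$ interprets as an algorithmically \emph{verifiable} true formula under $\mathcal{I}_{PA(\mathbb{N},\ SV)}$'' are, by Definition \ref{sec:5.def.5} together with Definition \ref{def:ebr.weak.qntfcn}, two spellings of the same statement --- namely that $F^{*}(x)$ is algorithmically \emph{verifiable} as always true --- so the rule trivially preserves the property. Modus Ponens is covered by Lemma \ref{sec:5.4.lem.4}: if both $[A]$ and $[A \rightarrow B]$ are algorithmically \emph{verifiable} as true, then for each \textit{specified} $n$ one combines the two finite-sequence algorithms to decide $[B]$, so $[B]$ is algorithmically \emph{verifiable} as true. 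Conjoining these four results yields the theorem.

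The step that actually carries the conceptual weight --- and the one I would be most careful to state precisely --- is the ``both verifiably true'' case of Lemma \ref{sec:5.4.lem.2}. What it delivers is, for each \textit{specified} $n$, an algorithm $TM_{(F,\ n)}$ that verifies $[F(m)]$ for all $m \leq n$; it does \emph{not} deliver a single algorithm $TM_{F}$ that verifies $[F(n)]$ uniformly in $n$. This is exactly why the resulting interpretation $\mathcal{I}_{PA(\mathbb{N},\ SV)}$ can only be called \emph{weak}, and why the theorem asserts preservation of \emph{algorithmic verifiability} rather than of \emph{algorithmic computability} --- the distinction that the strong interpretation $\mathcal{I}_{PA(\mathbb{N},\ SC)}$ of \S\ref{sec:pa.axms.true.alg.cmp} is designed to sharpen. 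Beyond flagging that point, there is no genuine obstacle here: the proof is a one-line bookkeeping step over Lemmas \ref{sec:5.4.lem.1}--\ref{sec:5.4.lem.4}.
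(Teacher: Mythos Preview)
Your proposal is correct and matches the paper's approach exactly: the paper states the theorem with only a $\Box$, indicating that it is the immediate conjunction of Lemmas \ref{sec:5.4.lem.1}--\ref{sec:5.4.lem.4}, which is precisely how you have assembled it. Your added commentary on the weak versus strong distinction in the induction case is accurate and mirrors the paper's own ``Comment'' following Lemma \ref{sec:5.4.lem.2}.
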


\noindent By \S \ref{sec:alg.ver.int.pa.n}, Theorem \ref{sec:5.5.lem.1} we further conclude that PA is \textit{weakly} consistent:

\begin{theorem}
\label{sec:5.4.thm.2}
If the PA formulas are algorithmically verifiable as true or false under $\mathcal{I}_{PA(\mathbb{N},\ SV)}$, then PA is consistent. \hfill $\Box$
\end{theorem}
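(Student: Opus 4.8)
The plan is to run the classical soundness-to-consistency argument, but carried out entirely within the \textit{evidence-based} (algorithmically \textit{verifiable}) semantics of \S\ref{sec:alg.ver.int.pa.n}. First I would upgrade Theorem \ref{sec:5.4.lem.5} from the axioms and rules of inference to all theorems: by a routine induction on the length of a PA-derivation, \emph{every} PA-provable formula $[F]$ interprets as an algorithmically \textit{verifiable} true formula under $\mathcal{I}_{PA(\mathbb{N},\ SV)}$. The base case is precisely that each PA axiom is so interpreted (Lemmas \ref{sec:5.4.lem.1} and \ref{sec:5.4.lem.2}, subsumed in Theorem \ref{sec:5.4.lem.5}), and the induction step is that Generalisation (Lemma \ref{sec:5.4.lem.3}) and Modus Ponens (Lemma \ref{sec:5.4.lem.4}) preserve algorithmically \textit{verifiable} truth under $\mathcal{I}_{PA(\mathbb{N},\ SV)}$.

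Next I would argue by contradiction. Suppose PA is inconsistent; then PA proves every closed formula, in particular both $[A]$ and $[\neg A]$ for some closed PA-formula $[A]$ (e.g. $[0 = 0]$ and $[\neg(0 = 0)]$). By the previous step, both $[A]$ and $[\neg A]$ interpret as algorithmically \textit{verifiable} true formulas under $\mathcal{I}_{PA(\mathbb{N},\ SV)}$. But by Tarski's Definition \ref{sec:5.def.2} together with Definitions \ref{sec:5.def.5} and \ref{sec:5.def.6} (in \S\ref{sec:trski.ind.def}), $[\neg A]$ being true under $\mathcal{I}_{PA(\mathbb{N},\ SV)}$ means that no denumerable sequence of $\mathbb{N}$ satisfies $[A]$, i.e. $[A]$ is \emph{not} true, i.e. $[A]$ is false under $\mathcal{I}_{PA(\mathbb{N},\ SV)}$. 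Hence $[A]$ would be algorithmically \textit{verifiable} as both true \emph{and} false under $\mathcal{I}_{PA(\mathbb{N},\ SV)}$, which the hypothesis of the theorem — that the PA formulas are algorithmically \textit{verifiable} as true \emph{or} false under $\mathcal{I}_{PA(\mathbb{N},\ SV)}$ (with Theorem \ref{sec:5.5.lem.1} underwriting this for atomic formulas and Definitions \ref{sec:5.def.2} to \ref{sec:5.def.6} propagating it) — explicitly excludes. This contradiction shows no such $[A]$ exists, so PA is consistent.

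I expect no genuine difficulty of depth here; the only care needed is bookkeeping. One must keep `true'/`false' read in the witness-independent, algorithmically \textit{verifiable} sense of \S\ref{sec:alg.ver.int.pa.n} rather than in any Platonist sense, and one must note that the mutual exclusivity of verifiable truth and verifiable falsity is already built into Definition \ref{sec:5.def.6} (false $\equiv$ not true) via the Tarskian clause of Definition \ref{sec:5.def.2} ($s$ satisfies $[\neg A]$ iff $s$ does not satisfy $[A]$) — so the contradiction is purely definitional and does \emph{not} rely on any $\omega$-consistency assumption, which is exactly the point of obtaining only a \emph{weak} (algorithmically \textit{verifiable}) consistency here. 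A minor secondary point is the treatment of formulas with free variables: an inconsistency already delivers $[A]$ and $[\neg A]$ for a closed $[A]$, so one may work with closed formulas throughout, but if preferred one can pass to universal closures using Definition \ref{sec:5.def.5} and Lemma \ref{sec:5.4.lem.3} without losing anything finitary.
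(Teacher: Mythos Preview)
Your proposal is correct and is essentially the intended argument: the paper gives no explicit proof at all (the statement carries only a $\Box$), merely remarking beforehand that the result follows from Theorem~\ref{sec:5.5.lem.1} together with the just-established Theorem~\ref{sec:5.4.lem.5}, i.e.\ the standard soundness-to-consistency step that you have spelled out in full. Your care over Definitions~\ref{sec:5.def.2} and~\ref{sec:5.def.6}, and over closed versus open formulas, is appropriate bookkeeping but introduces nothing beyond what the paper tacitly assumes.
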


\begin{quote}
\footnotesize
\textbf{Comment}: We note that, unlike Gentzen's argument\footnote{See \cite{Me64}, pp.258-271.}, which appeals to debatably `constructive' properties of set-theoretically defined transfinite ordinals, such a---strictly arithmetical---\textit{weak} proof of consistency is unarguably `constructive'; however it is not `finitary' since we cannot conclude from \S \ref{sec:alg.ver.int.pa.n}, Theorem \ref{sec:5.5.lem.1} that the quantified formulas of PA are `finitarily' decidable as true or false under the interpretation $\mathcal{I}_{PA(\mathbb{N},\ SV)}$.
\end{quote}

\subsection{The \textit{strong}, algorithmically \textit{computable}, interpretation $\mathcal{I}_{PA(\mathbb{N},\ SC)}$ of PA}
\label{sec:alg.cmp.int.pa}

We consider next\footnote{As detailed in \cite{An16}, \S 6: \textit{The standard `computable' interpretation $\mathcal{I}_{PA(N,\ SC)}$ of PA over $N$}.} a \textit{strong}, algorithmically \textit{computable}, interpretation $\mathcal{I}_{PA(\mathbb{N},\ SC)}$ of PA, under which we define:

\begin{definition}
\label{sec:5. def.2}
An atomic formula $[A]$ of PA is satisfiable under the interpretation $\mathcal{I}_{PA(\mathbb{N},\ SC)}$ if, and only if, $[A]$ is algorithmically computable under $\mathcal{I}_{PA(\mathbb{N},\ SC)}$.
\end{definition}

\noindent It follows that\footnote{See \cite{An16}, \textbf{Theorem 6.1}: The \textit{atomic} formulas of PA are algorithmically \textit{computable} as true or as false under the standard interpretation $\mathcal{I}_{PA(\mathbb{N},\ SC)}$.}:

\begin{theorem}
\label{sec:5.5.lem.2}
The atomic formulas of PA are algorithmically computable as true or as false under the algorithmically computable interpretation $\mathcal{I}_{PA(\mathbb{N},\ SC)}$. 
\end{theorem}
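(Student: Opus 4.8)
The plan is to proceed exactly as in the proof of Theorem~\ref{sec:5.5.lem.1}, but with the family of input-indexed algorithms there replaced by a \textit{single}, uniform deterministic algorithm. First I would recall that every atomic formula $[A(x_{1}, \ldots, x_{n})]$ of PA is, by the syntax of PA, of the form $[t_{1} = t_{2}]$, where $t_{1}, t_{2}$ are PA-terms built up from $0$, the variables, $[x^{\prime}]$, $[x+y]$ and $[x \star y]$. Since the latter three are defined recursively (cf.\ \cite{Go31}, p.17), exactly as invoked in the proof of Lemma~\ref{sec:5.4.lem.1}, each term $t_{i}$ interprets under $\mathcal{I}_{PA(\mathbb{N},\ SC)}$ as a primitive recursive function $t_{i}^{*}(x_{1}, \ldots, x_{n})$ of its free variables, and the interpreted relation $A^{*}(x_{1}, \ldots, x_{n})$ is the equality $t_{1}^{*}(x_{1}, \ldots, x_{n}) = t_{2}^{*}(x_{1}, \ldots, x_{n})$, which involves no quantification.

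Next I would exhibit the algorithm. Let $AL_{A}$ be the deterministic Turing machine (in the sense of Definition~\ref{def:det.alg}) that, on any input $n$-tuple $(b_{1}, \ldots, b_{n})$ of $\mathbb{N}$, computes $t_{1}^{*}(b_{1}, \ldots, b_{n})$ and $t_{2}^{*}(b_{1}, \ldots, b_{n})$ by unwinding the recursive clauses for $'$, $+$ and $\star$, and then halts, outputting `true' if the two resulting natural numbers coincide and `false' otherwise. Because each term-evaluation function is total and the comparison of two given naturals is decidable, $AL_{A}$ halts on every tuple and returns the correct truth-value of $A^{*}(b_{1}, \ldots, b_{n})$. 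Hence $AL_{A}$ provides objective evidence for deciding the truth/falsity of each proposition in the denumerable sequence $\{A^{*}(1), A^{*}(2), \ldots\}$ (and its obvious $n$-ary analogue), which is precisely what Definition~\ref{sec:2.def.2} demands for $A^{*}(x_{1}, \ldots, x_{n})$ to be algorithmically \textit{computable}. Since Definition~\ref{sec:5. def.2} identifies satisfaction under $\mathcal{I}_{PA(\mathbb{N},\ SC)}$ with algorithmic \textit{computability}, this—together with Tarski's Definition~\ref{sec:5.def.1} and Definitions~\ref{sec:5.def.2} to \ref{sec:5.def.6}—yields the theorem; it also re-establishes part~(ii) of Lemma~\ref{sec:4.lem.1}.

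The one point that needs genuine care, and the only real obstacle, is the \textit{uniformity} claim: that a \textit{single} machine $AL_{A}$ suffices for the \textit{entire} denumerable sequence of instantiations, rather than a sequence $AL_{(A, n)}$ of machines as in the verifiable case of Theorem~\ref{sec:5.5.lem.1}. This is exactly where the atomic case parts company with the quantified case: the recursion defining $+$, $\star$ and $'$ is bounded and structural in the term, so term-evaluation is genuinely primitive recursive and hence captured by one Turing machine, with no appeal to $\omega$-consistency and—unlike the proof of Theorem~\ref{sec:5.5.lem.1}—no appeal to G\"{o}del's proof predicate $xBy$. The subtlety that later forces an algorithmically \textit{verifiable}-but-not-\textit{computable} split re-emerges only once an unbounded quantifier is prefixed to such a relation, as it will be for G\"{o}del's $[R(x)]$; at the atomic level there is nothing to obstruct the construction, so the argument should be short.
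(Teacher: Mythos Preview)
Your proposal is correct and follows essentially the same line as the paper: reduce an atomic formula to an equality of closed terms, use the recursive definitions of $'$, $+$, $\star$ to evaluate those terms, and observe that a single uniform algorithm decides the resulting equality, which is exactly what Definition~\ref{sec:2.def.2} requires. The one minor difference is that the paper, after reducing $[A(b_{1},\ldots,b_{n})]$ to $[c=d]$ with $[c],[d]$ denoting numerals, invokes the standard PA-provability result (Mendelson, Proposition~3.6: PA proves $[\neg(m=n)]$ whenever $m\neq n$, and the converse) as the \emph{evidence}, whereas you compute the term values directly in $\mathbb{N}$ and compare them; your route is slightly more elementary and avoids the detour through formal provability, but the decomposition and the key observation (uniformity via primitive recursion of term evaluation) are the same.
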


\begin{proof}
If $[A(x_{1}, x_{2}, \ldots, x_{n})]$ is an \textit{atomic} formula of PA then, for any given sequence of numerals $[b_{1}, b_{2}, \ldots, b_{n}]$, the PA formula $[A(b_{1}, b_{2},$ $\ldots, b_{n})]$ is an \textit{atomic} formula of the form $[c=d]$, where $[c]$ and $[d]$ are \textit{atomic} PA formulas that denote PA numerals. Since $[c]$ and $[d]$ are recursively defined formulas in the language of PA, it follows from a standard result\footnote{For any natural numbers $m,\ n$, if $m \neq n$, then PA proves $[\neg(m = n)]$ (\cite{Me64}, p.110, Proposition 3.6). The converse is obviously true.} that, by \S \ref{sec:intro.prov.thm.pvnp}, Definition \ref{sec:2.def.2}, $[c=d]$ is algorithmically \textit{computable} as either true or false in $\mathbb{N}$ since there is an algorithm that, for any given sequence of numerals $[b_{1}, b_{2}, \ldots, b_{n}]$, will give \textit{evidence}\footnote{In the sense of \cite{Mu91} and \cite{Lob59} (op. cit.).} whether $[A(b_{1}, b_{2},$ $\ldots, b_{n})]$ interprets as true or false in $\mathbb{N}$. The lemma follows.\hfill $\Box$
\end{proof}

\noindent We note that the interpretation $\mathcal{I}_{PA(\mathbb{N},\ SC)}$ is finitary since\footnote{See \cite{An16}, \textbf{Lemma 6.2}: The formulas of PA are algorithmically \textit{computable} as true or as false under $\mathcal{I}_{PA(\mathbb{N},\ SC)}$.}:

\begin{lemma}
\label{sec:5.5.lem.3}
The formulas of PA are algorithmically computable finitarily as true or as false under $\mathcal{I}_{PA(\mathbb{N},\ SC)}$.
\end{lemma}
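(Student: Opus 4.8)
The plan is to prove the lemma by induction on the logical structure of PA formulas, paralleling Tarski's inductive Definitions \ref{sec:5.def.2} to \ref{sec:5.def.6}, but now reading ``satisfaction'' under $\mathcal{I}_{PA(\mathbb{N},\ SC)}$ as algorithmic \emph{computability} in the sense of Definition \ref{sec:2.def.2}. The base case is immediate from \S \ref{sec:alg.cmp.int.pa}, Theorem \ref{sec:5.5.lem.2}: every atomic formula $[A(x_{1}, \ldots, x_{n})]$ of PA is algorithmically computable as true or as false under $\mathcal{I}_{PA(\mathbb{N},\ SC)}$, and crucially the witness is a \emph{single} deterministic algorithm that, given any numeral instantiation, decides the corresponding proposition in $\mathbb{N}$ --- this is precisely the feature that distinguishes the present setting from $\mathcal{I}_{PA(\mathbb{N},\ SV)}$, where (by \S \ref{sec:alg.ver.int.pa.n}, Theorem \ref{sec:5.5.lem.1} and the comment following it) the atomic witnesses form only a denumerable \emph{family} of algorithms, so that the analogous induction breaks at the quantifier step.

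For the propositional connectives the inductive step is routine and manifestly finitary. If $[A]$ is algorithmically computable as true or as false under $\mathcal{I}_{PA(\mathbb{N},\ SC)}$, witnessed by a deterministic algorithm $AL_{A}$, then by Tarski's Definition \ref{sec:5.def.2} the algorithm that runs $AL_{A}$ and returns the opposite verdict witnesses that $[\neg A]$ is algorithmically computable as true or as false. Likewise, if $[A]$ and $[B]$ are each algorithmically computable as true or as false, witnessed by $AL_{A}$ and $AL_{B}$, then by Tarski's Definition \ref{sec:5.def.3} the algorithm that runs both and combines their verdicts according to the satisfaction clause for `$\rightarrow$' witnesses that $[A \rightarrow B]$ is algorithmically computable as true or as false; in each case the witnessing object remains a single algorithm, so finitariness is preserved.

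The crux, and the step I expect to be the main obstacle, is universal quantification. Here one must read $[(\forall x_{i})A]$ \emph{strongly}, as is appropriate under the algorithmically \emph{computable} interpretation, so that by Definition \ref{def:ebr.strong.qntfcn} the formula $[(\forall x_{i})A]$ is algorithmically computable as true under $\mathcal{I}_{PA(\mathbb{N},\ SC)}$ if, and only if, the interpreted relation $A^{*}(x_{i})$ (for given values of any remaining free variables) is algorithmically computable as \emph{always} true, and by Definition \ref{def:strong.neg} the formula $[\neg (\forall x_{i})A]$ is algorithmically computable as true if, and only if, no algorithm evidences $[(\forall x_{i})A]$ as algorithmically computable as always true. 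The induction hypothesis supplies a single deterministic algorithm $AL_{A}$ that decides each proposition in the denumerable sequence $\{A^{*}(1), A^{*}(2), \ldots\}$; hence this Boolean sequence is well-defined (Definition \ref{def:const.wd.nt.seq}), and exactly one of two mutually exclusive situations obtains: either every term is true --- in which case $[(\forall x_{i})A]$ is algorithmically computable as true (with $AL_{A}$ itself furnishing the evidence, so that $[\neg (\forall x_{i})A]$ is algorithmically computable as false) --- or some term is false, in which case no algorithm can evidence $[(\forall x_{i})A]$ as always true, whence by Definition \ref{def:strong.neg} the formula $[\neg (\forall x_{i})A]$ is algorithmically computable as true and so $[(\forall x_{i})A]$ is algorithmically computable as false. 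The delicate points to get right are that this \emph{dichotomy} is a determinate property of the fixed algorithm $AL_{A}$ --- so the evidence is finitary even though identifying \emph{which} disjunct holds need not itself be algorithmically decidable --- and that the argument must not covertly invoke Aristotle's particularisation (\S \ref{sec:intro.prov.thm.pvnp}, Definition \ref{def:ap}) in passing through the quantifier, since a determinate truth-value, unlike a witness for an existential, requires no such appeal. Once the quantifier case is secured the induction closes, and the cases being exhaustive the lemma follows.
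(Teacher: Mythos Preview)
Your proposal is correct and takes essentially the same approach as the paper: the paper's own proof is a one-line appeal to ``finite induction from Definition~\ref{sec:2.def.2}, Tarski's Definition~\ref{sec:5.def.1}, Definitions~\ref{sec:5.def.2} to~\ref{sec:5.def.6}, and Theorem~\ref{sec:5.5.lem.2}'', and you have simply unpacked that induction explicitly, handling the atomic base case, the propositional connectives, and the quantifier step in turn. Your identification of the quantifier step as the crux, and your care to note that the dichotomy at that step is a determinate property of the fixed algorithm $AL_{A}$ (without covertly invoking Aristotle's particularisation), are faithful to the paper's intent even though the paper itself leaves all of this implicit.
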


\begin{proof}
The Lemma follows by finite induction from by \S \ref{sec:intro.prov.thm.pvnp}, Definition \ref{sec:2.def.2}, Tarski's Definition \ref{sec:5.def.1} (in \S \ref{sec:rvwng.trski.ind.def}), and Definitions \ref{sec:5.def.2} to \ref{sec:5.def.6} (in \S \ref{sec:trski.ind.def}), and Theorem \ref{sec:5.5.lem.2}.\hfill $\Box$
\end{proof}

\subsubsection{The PA axioms are algorithmically \textit{computable} as true under $\mathcal{I}_{PA(\mathbb{N},\ SC)}$}
\label{sec:pa.axms.true.alg.cmp}

Defining satisfaction in terms of algorithmic \textit{computability} under $\mathcal{I}_{PA(\mathbb{N},\ SC)}$ now entails\footnote{See \cite{An16}, \S 6: \textit{The standard `computable' interpretation $\mathcal{I}_{PA(N,\ SC)}$ of PA over $N$}.}:

\begin{lemma}
\label{sec:6.lem.1}
The PA axioms PA$_{1}$ to PA$_{8}$ are algorithmically computable as true under the interpretation $\mathcal{I}_{PA(\mathbb{N},\ SC)}$.
\end{lemma}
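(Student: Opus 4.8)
The plan is to mirror the proof of \S \ref{sec:pa.axms.true.alg.ver}, Lemma \ref{sec:5.4.lem.1}, replacing algorithmic \textit{verifiability} throughout by algorithmic \textit{computability}. First I would observe that the function symbols and the equality predicate of PA---namely $[x^{\prime}]$, $[x + y]$, $[x \star y]$ and $[x = y]$---are introduced by the recursion equations underlying PA$_{2}$, PA$_{5}$ to PA$_{8}$, so that under $\mathcal{I}_{PA(\mathbb{N},\ SC)}$ they interpret as genuinely recursive functions and a recursive relation over $\mathbb{N}$. Consequently each of PA$_{1}$ to PA$_{8}$, none of which involves any quantifier, interprets as a Boolean combination of atomic formulas of the form $[c = d]$ in which $[c]$ and $[d]$ are terms built by composition from these recursive functions applied to the free variables $x_{1}, x_{2}, x_{3}$.

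Next I would invoke \S \ref{sec:alg.cmp.int.pa}, Theorem \ref{sec:5.5.lem.2}, by which every such atomic formula is algorithmically \textit{computable} as true or as false under $\mathcal{I}_{PA(\mathbb{N},\ SC)}$: given any assignment of numerals to its variables, a single Turing machine evaluates both sides of $[c = d]$ by unwinding the defining recursion and outputs the truth-value of the resulting proposition in $\mathbb{N}$. The quantifier-free case of \S \ref{sec:alg.cmp.int.pa}, Lemma \ref{sec:5.5.lem.3}---equivalently, a direct finite induction on logical structure using Tarski's Definitions \ref{sec:5.def.2} and \ref{sec:5.def.3} (in \S \ref{sec:trski.ind.def}) for $[\neg\ ]$ and $[\ \rightarrow\ ]$---then lifts this to the whole formula: from the deciding machines for the atomic sub-formulas one effectively assembles a single deciding machine for PA$_{i}$, for each $i \leq 8$.

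Finally I would check that each of PA$_{1}$ to PA$_{8}$ is algorithmically \textit{computable} as \textit{true}, i.e. that the deciding machine just described outputs `true' for \textit{every} numeral assignment to its free variables. This is immediate from the fact that, for every such assignment, PA$_{1}$ to PA$_{8}$ are instances of PA-provable quantifier-free formulas and hence hold in $\mathbb{N}$; alternatively it follows directly by induction from the recursion equations, using the standard result (cited in the proof of Theorem \ref{sec:5.5.lem.2}) that $m \neq n$ implies PA proves $[\neg (m = n)]$ and conversely. The lemma then follows by combining these three observations.

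I expect no genuine obstacle here. Unlike the Induction-axiom case, which must be treated separately (\S \ref{sec:pa.axms.true.alg.cmp}, the subsequent lemma), the axioms PA$_{1}$ to PA$_{8}$ are quantifier-free, so for them algorithmic \textit{computability} and algorithmic \textit{verifiability} essentially coincide; the only point requiring care is the \emph{uniformity} of the deciding algorithm---that one fixed Turing machine works for all numeral inputs to the free variables---and this is precisely what is guaranteed by the recursiveness of $+$, $\star$, ${}^{\prime}$ and $=$, hence by Theorem \ref{sec:5.5.lem.2}.
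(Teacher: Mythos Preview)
Your proposal is correct and follows essentially the same approach as the paper: observe that $[x^{\prime}]$, $[x+y]$, $[x\star y]$, $[x=y]$ are recursively defined, so PA$_{1}$--PA$_{8}$ interpret as quantifier-free recursive relations, and then appeal to the atomic-formula decidability result together with Tarski's Definitions \ref{sec:5.def.1}--\ref{sec:5.def.6}. Two minor points of comparison: the paper's own proof (perhaps by a slip) cites Theorem \ref{sec:5.5.lem.1} rather than Theorem \ref{sec:5.5.lem.2}, whereas you correctly invoke the latter; and you make explicit the step from \emph{decidable} to \emph{true for every assignment}, which the paper leaves implicit but which is indeed immediate for these quantifier-free recursive axioms.
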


\begin{proof}
Since $[x+y]$, $[x \star y]$, $[x = y]$, $[{x^{\prime}}]$ are defined recursively\footnote{cf. \cite{Go31}, p.17.}, the PA axioms PA$_{1}$ to PA$_{8}$ (see \S \ref{sec:intro.prov.thm.pvnp}) interpret as recursive relations that do not involve any quantification. The lemma follows from \S \ref{sec:alg.ver.int.pa.n}, Theorem \ref{sec:5.5.lem.1} and Tarski's Definition \ref{sec:5.def.1} (in \S \ref{sec:rvwng.trski.ind.def}), and Definitions \ref{sec:5.def.2} to \ref{sec:5.def.6} (in \S \ref{sec:trski.ind.def}).\hfill $\Box$
\end{proof}

\begin{lemma}
\label{sec:6.lem.2}
For any given PA formula $[F(x)]$, the Induction axiom schema $[F(0)$ $\rightarrow (((\forall x)(F(x) \rightarrow F(x^{\prime}))) \rightarrow (\forall x)F(x))]$ interprets as an algorithmically computable true formula under $\mathcal{I}_{PA(\mathbb{N},\ SC)}$.
\end{lemma}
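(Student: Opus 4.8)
The plan is to follow the case analysis in the proof of Lemma~\ref{sec:5.4.lem.2}, replacing ``verifiable'' throughout by ``computable'', but strengthening the decisive inductive step so that it delivers a \emph{single} deterministic algorithm in the sense of Definition~\ref{sec:2.def.2} rather than a separate algorithm $AL_{(F,\,n)}$ for each $n$. Since $F$ is taken (without loss of generality) to have the single free variable $x$, the formulas $[F(0)]$, $[(\forall x)(F(x) \rightarrow F(x^{\prime}))]$ and $[(\forall x)F(x)]$ are closed, so by Theorem~\ref{sec:5.5.lem.2} and Lemma~\ref{sec:5.5.lem.3} each of them is algorithmically \emph{computable} as true or as false under $\mathcal{I}_{PA(\mathbb{N},\ SC)}$; I appeal freely to Tarski's Definitions~\ref{sec:5.def.2}--\ref{sec:5.def.6} and to Definition~\ref{def:ebr.strong.qntfcn} for strong quantification.

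I would split into three exhaustive cases. \textbf{(a)} If $[F(0)]$ is algorithmically \emph{computable} as false, then by Definition~\ref{sec:5.def.3} the Induction instance $[F(0) \rightarrow (\ldots)]$ is algorithmically \emph{computable} as true, and we are done. \textbf{(b)} If $[F(0)]$ is \emph{computable} as true but $[(\forall x)(F(x) \rightarrow F(x^{\prime}))]$ is \emph{computable} as false, then by Definition~\ref{sec:5.def.3} the subformula $[((\forall x)(F(x) \rightarrow F(x^{\prime}))) \rightarrow (\forall x)F(x)]$ is \emph{computable} as true, hence so is the whole Induction instance. \textbf{(c)} Suppose both $[F(0)]$ and $[(\forall x)(F(x) \rightarrow F(x^{\prime}))]$ are algorithmically \emph{computable} as true. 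By Definition~\ref{def:ebr.strong.qntfcn} the second hypothesis means that $F^{*}(x) \rightarrow F^{*}(x^{\prime})$ is algorithmically \emph{computable} as \emph{always} true, i.e.\ there is one deterministic algorithm $AL$ which, for every $n$, provides objective evidence that $F^{*}(n) \rightarrow F^{*}(n^{\prime})$ holds; the first hypothesis gives that $F^{*}(0)$ holds. Composing modus ponens finitely often ($F^{*}(0)$; then $F^{*}(0) \rightarrow F^{*}(1)$, so $F^{*}(1)$; then $F^{*}(1) \rightarrow F^{*}(2)$, so $F^{*}(2)$; and so on) shows that on input $n$ one effectively confirms, in exactly $n$ calls to $AL$, that $F^{*}(n)$ holds; hence $F^{*}(x)$ is algorithmically \emph{computable} as always true, so by Definition~\ref{def:ebr.strong.qntfcn} $[(\forall x)F(x)]$ is \emph{computable} as true, whence so is the Induction instance. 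As the three cases are exhaustive, the lemma follows.

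The main obstacle is case~\textbf{(c)}, and it is precisely the point of the lemma: one must extract a genuine uniform algorithm $AL_{F}$ deciding $F^{*}(n)$ for all $n$ from the hypothesis that $F^{*}(x) \rightarrow F^{*}(x^{\prime})$ is \emph{computable} as always true, rather than merely concluding non-constructively that each $F^{*}(n)$ is true. The care required is to see that the meta-level induction is itself \emph{effective}: because the inductive-step implication is decided true for \emph{every} $n$ by the \emph{same} machine $AL$, and the base case is decided true, the $n$-fold chain of modus-ponens steps is an honest finite computation, so no ``$\omega$-style'' passage to a limit intrudes. This is exactly where the \emph{strong} hypothesis does work that was unavailable under mere verifiability, as the Comment following Lemma~\ref{sec:5.4.lem.2} anticipates.
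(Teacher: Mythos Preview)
Your proposal is correct and follows essentially the same approach as the paper: the same three-way case split on the truth values of $[F(0)]$ and $[(\forall x)(F(x)\rightarrow F(x'))]$, with the decisive case handled by chaining a single algorithm through finitely many modus-ponens steps to obtain a uniform decider for $F^{*}(n)$. Your write-up is in fact slightly more explicit than the paper's in spelling out the $n$-fold composition and in invoking Lemma~\ref{sec:5.5.lem.3} to justify that the case split is exhaustive, but the underlying argument is identical.
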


\begin{proof}
By Tarski's Definition \ref{sec:5.def.1} (in \S \ref{sec:rvwng.trski.ind.def}), and Definitions \ref{sec:5.def.2} to \ref{sec:5.def.6} (in \S \ref{sec:trski.ind.def}):
\begin{enumerate}
	
	\item[(a)] If $[F(0)]$ interprets as an algorithmically \textit{computable} false formula under $\mathcal{I}_{PA(\mathbb{N},\ SC)}$ the lemma is proved.
	
	\begin{quote}
		\footnotesize
		\textit{Reason}: Since $[F(0) \rightarrow (((\forall x)(F(x) \rightarrow F(x^{\prime}))) \rightarrow (\forall x)F(x))]$ interprets as an algorithmically \textit{computable} true formula if, and only if, either $[F(0)]$ interprets as an algorithmically \textit{computable} false formula, or $[((\forall x)(F(x) \rightarrow F(x^{\prime}))) \rightarrow (\forall x)F(x)]$ interprets as an algorithmically \textit{computable} true formula, under $\mathcal{I}_{PA(\mathbb{N},\ SC)}$.
	\end{quote}
	
	\item[(b)] If $[F(0)]$ interprets as an algorithmically \textit{computable} true formula, and $[(\forall x)(F(x) \rightarrow F(x^{\prime}))]$ interprets as an algorithmically \textit{computable} false formula, under $\mathcal{I}_{PA(\mathbb{N},\ SC)}$, the lemma is proved.
	
	\item[(c)] If $[F(0)]$ and $[(\forall x)(F(x) \rightarrow F(x^{\prime}))]$ both interpret as algorithmically \textit{computable} true formulas under $\mathcal{I}_{PA(\mathbb{N},\ SC)}$, then by Definition \ref{sec:2.def.2} there is an algorithm which, for any natural number $n$, will give \textit{evidence}\footnote{In the sense of \cite{Mu91} and \cite{Lob59} (op. cit.).} that the formula $[F(n) \rightarrow F(n^{\prime})]$ is an algorithmically \textit{computable} true formula under $\mathcal{I}_{PA(\mathbb{N},\ SC)}$.
	
	\item[(d)] Since $[F(0)]$ interprets as an algorithmically \textit{computable} true formula under $\mathcal{I}_{PA(\mathbb{N},\ SC)}$, it follows that there is an algorithm which, for any natural number $n$, will give \textit{evidence} that $[F(n)]$ is an algorithmically \textit{computable} true formula under the interpretation.
	
	\item[(e)] Hence $[(\forall x)F(x)]$ is an algorithmically \textit{computable} true formula under $\mathcal{I}_{PA(\mathbb{N},\ SC)}$.
\end{enumerate}

\noindent Since the above cases are exhaustive, the lemma follows.\hfill $\Box$
\end{proof}

\begin{lemma}
\label{sec:6.lem.3}
Generalisation preserves algorithmically computable truth under $\mathcal{I}_{PA(\mathbb{N},\ SC)}$.
\end{lemma}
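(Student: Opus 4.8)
The plan is to reproduce, \emph{mutatis mutandis}, the argument already used for the \textit{verifiable} case in \S\ref{sec:alg.ver.int.pa.n}, Lemma \ref{sec:5.4.lem.3}, replacing algorithmic \textit{verifiability} throughout by algorithmic \textit{computability} and the interpretation $\mathcal{I}_{PA(\mathbb{N},\ SV)}$ by $\mathcal{I}_{PA(\mathbb{N},\ SC)}$. The heart of the matter is that ``Generalisation'' adds no new content at the semantic level under $\mathcal{I}_{PA(\mathbb{N},\ SC)}$: the two meta-assertions `$[F(x)]$ interprets as an algorithmically \textit{computable} true formula under $\mathcal{I}_{PA(\mathbb{N},\ SC)}$' and `$[(\forall x)F(x)]$ interprets as an algorithmically \textit{computable} true formula under $\mathcal{I}_{PA(\mathbb{N},\ SC)}$' are two verbal packagings of the \emph{same} assertion, namely that $F^{*}(x)$ is algorithmically \textit{computable} as always true in the sense of \S\ref{sec:intro.prov.thm.pvnp}, Definition \ref{sec:2.def.2} --- equivalently, that there is a single deterministic algorithm $AL_{F}$ deciding affirmatively every member of the denumerable sequence $\{F^{*}(1), F^{*}(2), \ldots\}$.

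The steps I would carry out are: first, unpack the hypothesis. By Definition \ref{sec:5.def.5} (in \S\ref{sec:trski.ind.def}) applied to the free-variable formula $[F(x)]$, together with Definition \ref{sec:5. def.2} and Lemma \ref{sec:5.5.lem.3}, `$[F(x)]$ is algorithmically \textit{computable} as true under $\mathcal{I}_{PA(\mathbb{N},\ SC)}$' means exactly that $F^{*}(x)$ is algorithmically \textit{computable} as always true. Second, unpack the conclusion. By Definition \ref{sec:5.def.5} again, $[(\forall x)F(x)]$ is true under $\mathcal{I}_{PA(\mathbb{N},\ SC)}$ iff every denumerable sequence of $\mathbb{N}$ satisfies it; by Definition \ref{sec:5.def.4} this reduces to: every $n\in\mathbb{N}$ makes $[F(x)]$ satisfied; and, invoking Definition \ref{sec:5. def.2}, Theorem \ref{sec:5.5.lem.2} and Lemma \ref{sec:5.5.lem.3} as before, this is again precisely the statement that $F^{*}(x)$ is algorithmically \textit{computable} as always true --- which is also what Definition \ref{def:ebr.strong.qntfcn} (\textit{strong quantification}) records. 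Hence the hypothesis and the conclusion are literally equivalent, the witness $AL_{F}$ for the premise already serves as witness for the conclusion, and no fresh algorithm need be exhibited; the lemma follows.

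The step I expect to carry the weight is not any computation but the finitary bookkeeping that identifies `truth of the \textit{open} formula $[F(x)]$ under $\mathcal{I}_{PA(\mathbb{N},\ SC)}$' with `algorithmic \textit{computability} of $F^{*}(x)$ as always true' \emph{without} smuggling in an $\omega$-rule or Aristotle's particularisation. This is exactly the juncture at which the \textit{computable} interpretation is better behaved than the \textit{verifiable} one: contrast the Comment following \S\ref{sec:alg.ver.int.pa.n}, Theorem \ref{sec:5.5.lem.1}, where the analogous identification cannot be closed finitarily for $\mathcal{I}_{PA(\mathbb{N},\ SV)}$. Because $\mathcal{I}_{PA(\mathbb{N},\ SC)}$ is finitary by Lemma \ref{sec:5.5.lem.3}, the universal quantification over $n$ implicit in Definition \ref{sec:5.def.4} is discharged by the \emph{single} algorithm of Definition \ref{sec:2.def.2}, so the equivalence --- and therefore the proof of this lemma --- is as short as its verifiable counterpart.
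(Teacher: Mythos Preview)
Your proposal is correct and takes essentially the same approach as the paper: both reduce the lemma to the observation that the meta-assertions `$[F(x)]$ is algorithmically \textit{computable} as true under $\mathcal{I}_{PA(\mathbb{N},\ SC)}$' and `$[(\forall x)F(x)]$ is algorithmically \textit{computable} as true under $\mathcal{I}_{PA(\mathbb{N},\ SC)}$' unpack to the same statement, namely that $F^{*}(x)$ is algorithmically \textit{computable} as always true. Your version is more explicit in tracing the reduction through Definitions~\ref{sec:5.def.4}, \ref{sec:5.def.5}, \ref{sec:5. def.2} and \ref{def:ebr.strong.qntfcn}, whereas the paper simply asserts the equivalence in one line, but the argument is the same.
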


\begin{proof}
The two meta-assertions:

\vspace{+1ex}
`$[F(x)]$ interprets as an algorithmically computable true formula under $\mathcal{I}_{PA(\mathbb{N},\ SC)}$\footnote{See \S 2.A.a, Definition \ref{sec:5.def.5}}'

\vspace{+1ex}
\noindent and

\vspace{+1ex}
`$[(\forall x)F(x)]$ interprets as an algorithmically computable true formula under $\mathcal{I}_{PA(\mathbb{N},\ SC)}$'

\vspace{+1ex}
\noindent both mean:

\vspace{+1ex}
$[F(x)]$ is algorithmically computable as always true under $\mathcal{I}_{PA(\mathbb{N},\ S)}$. \hfill $\Box$
\end{proof}

\noindent It is also straightforward to see that:

\begin{lemma}
\label{sec:6.lem.4}
Modus Ponens preserves algorithmically computable truth under $\mathcal{I}_{PA(\mathbb{N},\ SC)}$. \hfill $\Box$
\end{lemma}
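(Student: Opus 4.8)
The final statement to prove is Lemma \ref{sec:6.lem.4}: \emph{Modus Ponens preserves algorithmically computable truth under $\mathcal{I}_{PA(\mathbb{N},\ SC)}$}.

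The plan is to argue exactly as in the parallel \emph{verifiable} case (Lemma \ref{sec:5.4.lem.4}), transposing `verifiable' to `computable' throughout. First I would unpack what the claim asserts: if $[A]$ and $[A \rightarrow B]$ each interpret as algorithmically \emph{computable} true formulas under $\mathcal{I}_{PA(\mathbb{N},\ SC)}$, then $[B]$ does too. By Definition \ref{sec:2.def.2}, the hypothesis gives a deterministic algorithm $AL_{A}$ deciding every proposition in $\{A^{*}(1), A^{*}(2), \ldots\}$ as true, and a deterministic algorithm $AL_{(A \rightarrow B)}$ deciding every proposition in $\{(A \rightarrow B)^{*}(1), (A \rightarrow B)^{*}(2), \ldots\}$ as true. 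I would then invoke Tarski's Definition \ref{sec:5.def.3}: the sequence at index $n$ satisfies $[A \rightarrow B]$ iff it does not satisfy $[A]$ or it satisfies $[B]$. Since $AL_{A}$ evidences that index $n$ satisfies $[A]$ for every $n$, the first disjunct fails at every $n$, so $(A\rightarrow B)^{*}(n)$ true forces $B^{*}(n)$ true.

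The key step is then to exhibit the required single algorithm $AL_{B}$. The natural construction is: on input $n$, run $AL_{A}$ and $AL_{(A \rightarrow B)}$ at $n$ — both halt with verdict `true' by hypothesis — and then, by the truth-table for material implication (Definition \ref{sec:5.def.3} together with Definition \ref{sec:5.def.2}), output `true' for $B^{*}(n)$. Because $AL_{A}$ and $AL_{(A\rightarrow B)}$ are deterministic and total on $\mathbb{N}$, their composition is a deterministic algorithm total on $\mathbb{N}$, so $AL_{B}$ decides every proposition in the denumerable sequence $\{B^{*}(1), B^{*}(2), \ldots\}$, which is precisely what Definition \ref{sec:2.def.2} requires for $[B]$ to be algorithmically \emph{computable} as always true under $\mathcal{I}_{PA(\mathbb{N},\ SC)}$.

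I do not anticipate a genuine obstacle here — the lemma is `straightforward' precisely because material implication is a finitary truth function and composition of deterministic total algorithms is again a deterministic total algorithm. The only point demanding a little care, and the one I would flag as the crux, is the \emph{uniformity}: one must observe that the two given algorithms are genuine Turing machines (per Definition \ref{def:det.alg}) halting on \emph{every} natural-number input, so that the derived machine $AL_{B}$ inherits totality over the \emph{denumerable} sequence rather than merely over arbitrarily long finite initial segments — this is exactly the distinction (computability versus mere verifiability) that the whole section hinges on, and it is what makes the argument go through for $\mathcal{I}_{PA(\mathbb{N},\ SC)}$ by the same reasoning used for $\mathcal{I}_{PA(\mathbb{N},\ SV)}$ in Lemma \ref{sec:5.4.lem.4}.
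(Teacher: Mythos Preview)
Your proposal is correct and is precisely the argument the paper has in mind: the paper gives no proof at all for Lemma \ref{sec:6.lem.4} (it is stated with a closing $\Box$ immediately after ``It is also straightforward to see that:''), so your write-up simply spells out the obvious construction---composing the two given total deterministic algorithms via the truth-table for $\rightarrow$ (Definition \ref{sec:5.def.3})---that the author deemed too routine to include. Your emphasis on uniformity (totality over the full denumerable sequence, not merely over finite initial segments) is exactly the right point to flag, since that is what distinguishes the $\mathcal{I}_{PA(\mathbb{N},\ SC)}$ case from the $\mathcal{I}_{PA(\mathbb{N},\ SV)}$ case.
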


\noindent We conclude that\footnote{Without appeal, moreover, to Aristotle's particularisation.}:

\begin{theorem}
\label{sec:6.lem.5}
The axioms of PA are always algorithmically computable as true under the interpretation $\mathcal{I}_{PA(\mathbb{N},\ SC)}$, and the rules of inference of PA preserve the properties of algorithmically computable satisfaction/truth under $\mathcal{I}_{PA(\mathbb{N},\ SC)}$.\hfill $\Box$
\end{theorem}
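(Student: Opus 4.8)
The plan is to obtain the statement as a direct assembly of the four preceding lemmas of this subsection, treating its two clauses separately. For the first clause I would recall that ``the axioms of PA'' comprises the finitely many non-logical axioms PA$_{1}$ through PA$_{8}$ together with \textit{every} instance of the first-order Induction schema PA$_{9}$. Lemma \ref{sec:6.lem.1} already disposes of PA$_{1}$ to PA$_{8}$: each interprets under $\mathcal{I}_{PA(\mathbb{N},\ SC)}$ as a quantifier-free recursive relation, so Theorem \ref{sec:5.5.lem.2} together with Tarski's clauses for $\neg$ and $\rightarrow$ (Definitions \ref{sec:5.def.2} to \ref{sec:5.def.6}) yields a single deterministic algorithm deciding it for every assignment of numerals. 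Lemma \ref{sec:6.lem.2} handles PA$_{9}$: for an arbitrary PA formula $[F(x)]$ the schema instance $[F(0) \rightarrow (((\forall x)(F(x) \rightarrow F(x^{\prime}))) \rightarrow (\forall x)F(x))]$ is shown to be algorithmically \textit{computable} as true under $\mathcal{I}_{PA(\mathbb{N},\ SC)}$, the decisive move being that when both $[F(0)]$ and $[(\forall x)(F(x) \rightarrow F(x^{\prime}))]$ are algorithmically \textit{computable} as true one thereby obtains a \textit{single} algorithm which, uniformly in $n$, certifies $[F(n)]$. Combining these two lemmas gives that every PA axiom is algorithmically \textit{computable} as true under $\mathcal{I}_{PA(\mathbb{N},\ SC)}$.

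For the second clause I would invoke Lemma \ref{sec:6.lem.3} and Lemma \ref{sec:6.lem.4}. Generalisation preserves algorithmically \textit{computable} truth because, by Definition \ref{def:ebr.strong.qntfcn}, the assertions ``$[F(x)]$ is algorithmically \textit{computable} as true'' and ``$[(\forall x)F(x)]$ is algorithmically \textit{computable} as true'' are literally the same statement about the interpreted relation $F^{*}(x)$; and Modus Ponens preserves it because Tarski's clause for $\rightarrow$ (Definition \ref{sec:5.def.3}) lets a deciding algorithm for $[B]$ be synthesised by dovetailing those for $[A]$ and for $[A \rightarrow B]$. Since Generalisation and Modus Ponens are the only rules of inference of PA, the second clause follows, and the theorem with it.

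The one point that requires care --- and it is exactly the point the comment following Lemma \ref{sec:5.4.lem.2} flags as \textit{failing} for the merely \textit{verifiable} interpretation $\mathcal{I}_{PA(\mathbb{N},\ SV)}$ --- is that every occurrence of ``algorithmically computable'' above must be read in the \textit{strong}, uniform sense of Definition \ref{sec:2.def.2}: one deterministic algorithm $AL_{F}$ settling the whole denumerable sequence $\{F(1), F(2), \ldots\}$, not a family $AL_{(F,\ n)}$ of finite-initial-segment deciders as in Definition \ref{sec:2.def.1}. I expect this uniformity bookkeeping to be the main (and essentially the only) obstacle: at each composition step --- the Boolean connectives inside PA$_{1}$ to PA$_{8}$, the nested implication of PA$_{9}$, and the two inference rules --- one must check that the algorithms supplied by the cited lemmas genuinely dovetail into a single algorithm \textit{without} appeal to Aristotle's particularisation, hence without covertly reintroducing the assumption that PA is $\omega$-consistent. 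Granting that, the theorem is immediate; I would close by noting that it is precisely this uniformity which, via Lemma \ref{sec:5.5.lem.3}, upgrades the \textit{weak} consistency of Theorem \ref{sec:5.4.thm.2} to a \textit{finitary} consistency proof for PA.
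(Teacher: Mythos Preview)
Your proposal is correct and matches the paper's approach exactly: the paper gives no explicit proof (the $\Box$ is attached directly to the statement), treating the theorem as an immediate consequence of Lemmas \ref{sec:6.lem.1}, \ref{sec:6.lem.2}, \ref{sec:6.lem.3}, and \ref{sec:6.lem.4}. Your write-up simply spells out that assembly, together with the uniformity caveat the paper itself flags in the comment after Lemma \ref{sec:5.4.lem.2}.
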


\noindent We thus have a \textit{finitary} proof that:

\begin{theorem}
\label{sec:6.thm.2}
PA is \textit{strongly} consistent. \hfill $\Box$
\end{theorem}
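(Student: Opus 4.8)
The plan is to run the classical soundness-implies-consistency argument, but carried out entirely within the \textit{finitary}, algorithmically \textit{computable} framework of $\mathcal{I}_{PA(\mathbb{N},\ SC)}$, so that the resulting consistency is \textit{strong} rather than merely \textit{weak} (in the sense of Theorem \ref{sec:5.4.thm.2}). First I would establish, by induction on the length of a PA-derivation, that \emph{every} PA-theorem $[F]$ interprets as an algorithmically \textit{computable} true formula under $\mathcal{I}_{PA(\mathbb{N},\ SC)}$: the base case is exactly Theorem \ref{sec:6.lem.5}, namely that each PA axiom (including every instance of the Induction schema) is algorithmically \textit{computable} as true, and the induction step is the second clause of Theorem \ref{sec:6.lem.5}, namely that Generalisation and Modus Ponens preserve algorithmically \textit{computable} truth under $\mathcal{I}_{PA(\mathbb{N},\ SC)}$ (Lemmas \ref{sec:6.lem.3} and \ref{sec:6.lem.4}).

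Next I would invoke Lemma \ref{sec:5.5.lem.3}: every PA formula is algorithmically \textit{computable} \textit{finitarily} as true or as false under $\mathcal{I}_{PA(\mathbb{N},\ SC)}$; and, by Tarski's Definition \ref{sec:5.def.2} for negation together with Definition \ref{sec:2.def.2}, these two possibilities are mutually exclusive, so there is no PA formula $[F]$ for which both $[F]$ and $[\neg F]$ interpret as algorithmically \textit{computable} true. Now suppose, for contradiction, that PA is inconsistent, i.e. that some PA formula $[F]$ has both $[F]$ and $[\neg F]$ PA-provable. By the first step, both $[F]$ and $[\neg F]$ are then algorithmically \textit{computable} as true under $\mathcal{I}_{PA(\mathbb{N},\ SC)}$, contradicting the mutual exclusivity just noted. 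Hence PA is consistent; and since the whole argument proceeds through the \textit{finitary} interpretation $\mathcal{I}_{PA(\mathbb{N},\ SC)}$ (Lemma \ref{sec:5.5.lem.3}), not through the merely \textit{weak} $\mathcal{I}_{PA(\mathbb{N},\ SV)}$, and with no appeal to Aristotle's particularisation, this is precisely the asserted \textit{strong} consistency of PA.

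I do not anticipate a genuine obstacle: the substantive work has already been discharged in Theorem \ref{sec:5.5.lem.2}, Theorem \ref{sec:6.lem.5}, and Lemma \ref{sec:5.5.lem.3}. The one point that should be made explicit rather than left tacit — and which is the hinge of the contradiction — is that ``algorithmically \textit{computable} as true'' and ``algorithmically \textit{computable} as false'' cannot both hold of a single formula; this is immediate from Definition \ref{sec:5.def.2} (a sequence satisfies $[\neg A]$ if, and only if, it does not satisfy $[A]$) combined with Definition \ref{sec:2.def.2}, but stating it clearly is what turns the soundness lemma into a consistency proof.
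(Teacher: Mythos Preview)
Your proposal is correct and takes essentially the same approach as the paper: the paper states Theorem \ref{sec:6.thm.2} with only a $\Box$, treating it as an immediate consequence of Theorem \ref{sec:6.lem.5} (soundness of $\mathcal{I}_{PA(\mathbb{N},\ SC)}$), and you have simply made explicit the standard soundness-implies-consistency step that the paper leaves tacit. The additional care you take to cite Lemma \ref{sec:5.5.lem.3} and Definition \ref{sec:5.def.2} for the mutual exclusivity of ``algorithmically computable as true'' and ``algorithmically computable as false'' is exactly the hinge the paper relies on implicitly.
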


\begin{quote}
	\footnotesize
	\textbf{Poincar\'{e}-Hilbert debate}: We note that the Poincar\'{e}-Hilbert debate\footnote{See \cite{Hi27}, p.472; also \cite{Br13}, p.59; \cite{We27}, p.482; \cite{Pa71}, p.502-503.} on whether the PA Axiom Schema of Induction can be labelled `finitary' or not dissolves since:
	
	\begin{enumerate}
		\item[(i)] The algorithmically \textit{verifiable}, non-finitary, interpretation $\mathcal{I}_{PA(\mathbb{N},\ SV)}$ of PA validates Poincar\'{e}'s contention that the PA Axiom Schema of Finite Induction could not be justified finitarily with respect to algorithmic \textit{verifiability} under the classical \textit{standard} interpretation of arithmetic\footnote{See \cite{Me64}, p.107.}, as any such argument would necessarily need to appeal to some form of infinite induction\footnote{Such as, for instance, in Gerhard Gentzen's \textit{non-finitary} proof of consistency for PA, which involves a \textit{non-finitary} Rule of Infinite Induction (see \cite{Me64}, p.259(II)(e).) that appeals to the well-ordering property of transfinite ordinals.}; whilst
		
		\item[(ii)] The algorithmically \textit{computable} finitary interpretation $\mathcal{I}_{PA(\mathbb{N},\ SC)}$ of PA validates Hilbert's belief that a finitary justification of the Axiom Schema was possible under some finitary interpretation of an arithmetic such as PA.
	\end{enumerate}
	
\end{quote}

\subsection{Bridging PA Provability and Turing-computability}
\label{sec:6.3.a}

We can now show how \textit{evidence-based} reasoning admits a Provability Theorem for PA\footnote{See \cite{An16}, \textbf{Theorem 7.1} (Provability Theorem for PA): A PA formula $[F(x)]$ is PA-provable if, and only if, $[F(x)]$ is algorithmically \textit{computable} as always true in $\mathbb{N}$.} which allows us to bridge arithmetic provability, and Turing-computability; in the sense conjectured, for instance, by Christian S. Calude, Elena Calude and Solomon Marcus in \cite{CCS01}:

\begin{quote}
\footnotesize
``Classically, there are two equivalent ways to look at the mathematical notion of proof: logical, as a finite sequence of sentences strictly obeying some axioms and inference rules, and computational, as a specific type of computation. Indeed, from a proof given as a sequence of sentences one can easily construct a Turing machine producing that sequence as the result of some finite computation and, conversely, given a machine computing a proof we can just print all sentences produced during the computation and arrange them into a sequence." \\ \textit{\tiny{\ldots Calude, Calude and Marcus: \cite{CCS01}.}}
\end{quote}

\noindent where the authors seem to hold that Turing-computability of a `proof', in the case of a mathematical proposition, ought to be treated as equivalent to the provability of its representation in the corresponding formal language\footnote{We note that Theorem \ref{sec:6.3.thm.1} (Provability Theorem for PA) also offers a solution to Barendregt and Wiedijk's: `\textit{The challenge of computer mathematics}' \cite{BW05}.}.

\begin{theorem}[Provability Theorem for PA]
	\label{sec:6.3.thm.1}
	A PA formula $[F(x)]$ is PA-provable if, and only if, $[F(x)]$ is algorithmically \textit{computable} as always true in $\mathbb{N}$.
\end{theorem}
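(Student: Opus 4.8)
The plan is to prove the two directions of the biconditional separately, leaning on the machinery assembled in Sections \ref{sec:alg.cmp.int.pa} and \ref{sec:pa.axms.true.alg.cmp}. For the forward direction---if $[F(x)]$ is PA-provable then $[F(x)]$ is algorithmically \textit{computable} as always true in $\mathbb{N}$---I would argue by induction on the length of a PA-proof of $[F(x)]$. The base case is that $[F(x)]$ is an axiom of PA: this is precisely Theorem \ref{sec:6.lem.5} (via Lemmas \ref{sec:6.lem.1} and \ref{sec:6.lem.2}), which gives that every PA axiom, including every instance of the Induction schema, interprets as an algorithmically \textit{computable} true formula under $\mathcal{I}_{PA(\mathbb{N},\ SC)}$. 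For the inductive step, if $[F(x)]$ is obtained by Generalisation or Modus Ponens from earlier lines that are (by inductive hypothesis) algorithmically \textit{computable} as always true, then Lemmas \ref{sec:6.lem.3} and \ref{sec:6.lem.4} guarantee the conclusion inherits this property. Hence every PA-provable formula is algorithmically \textit{computable} as always true in $\mathbb{N}$; this half is essentially a soundness argument for $\mathcal{I}_{PA(\mathbb{N},\ SC)}$.

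For the converse---if $[F(x)]$ is algorithmically \textit{computable} as always true in $\mathbb{N}$ then $[F(x)]$ is PA-provable---I would proceed contrapositively. Suppose $[F(x)]$ is not PA-provable. Since PA is \emph{strongly} consistent by Theorem \ref{sec:6.thm.2}, the soundness direction just established tells us that $[\neg F(x)]$ cannot be PA-provable either unless $[\neg F(x)]$ is itself algorithmically \textit{computable} as always true; more carefully, I would invoke Lemma \ref{sec:5.5.lem.3}, which says the formulas of PA are algorithmically \textit{computable} \emph{finitarily} as true or as false under $\mathcal{I}_{PA(\mathbb{N},\ SC)}$. The key structural fact to exploit is that, for a quantifier-free or suitably bounded $[F(x)]$, algorithmic \textit{computability} of $F^{*}(x)$ as always true should be reflected back as PA-provability of $[F(x)]$ by a term-by-term argument using the standard representability results (the primitive recursive relation $xBy$ of G\"{o}del and the decidability of atomic formulas established in Theorem \ref{sec:5.5.lem.2}), combined with the observation that the algorithm $AL_{F}$ witnessing computability can be internalised as a PA-derivation schema. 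Where a universal quantifier is present, \textit{strong quantification} (Definition \ref{def:ebr.strong.qntfcn}) reduces the claim for $[(\forall x)F(x)]$ to the claim for $F^{*}(x)$, and the Induction axiom PA$_{9}$ supplies the inference from ``$F^{*}(n)$ computably true for every $n$, uniformly'' to PA-provability of $[(\forall x)F(x)]$.

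The main obstacle, and the step I would expect to absorb most of the work, is this converse direction---specifically, turning a uniform deterministic algorithm $AL_{F}$ that decides $\{F^{*}(1), F^{*}(2), \ldots\}$ into an actual PA-proof of $[F(x)]$. The delicate point is that mere pointwise computable truth of every $F^{*}(n)$ does not automatically yield a single PA-derivation of $[(\forall x)F(x)]$; one needs the \emph{uniformity} of $AL_{F}$ (as opposed to the family $AL_{(F,n)}$ of algorithmic \textit{verifiability}) to be strong enough to be captured by a finite PA-derivation, presumably via a representability/$\Sigma_1$-completeness argument or via an explicit induction on the syntactic complexity of $[F(x)]$ that mirrors Tarski's Definitions \ref{sec:5.def.2}--\ref{sec:5.def.6}. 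I would structure this as a lemma: for each logical connective and for the universal quantifier, show that PA-provability is preserved under the corresponding clause of the \textit{computable} interpretation, using PA$_{9}$ for the quantifier case. Assembling these clauses by induction on the build-up of $[F(x)]$ then closes the converse, and the theorem follows.
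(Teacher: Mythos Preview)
Your forward direction is essentially the paper's: both invoke Theorem \ref{sec:6.lem.5} (soundness of $\mathcal{I}_{PA(\mathbb{N},\ SC)}$) to conclude that PA-provable formulas are algorithmically \textit{computable} as always true. Your induction on proof length just unpacks what Theorem \ref{sec:6.lem.5} already packages.

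For the converse, however, your route diverges from the paper's and carries a genuine gap. The paper does \emph{not} attempt to internalise the algorithm $AL_{F}$ as a PA-derivation. Instead it argues, in one stroke, that PA must \emph{decide} $[(\forall x)F(x)]$: if both $[(\forall x)F(x)]$ and $[\neg(\forall x)F(x)]$ were PA-unprovable, then each of PA$+[(\forall x)F(x)]$ and PA$+[\neg(\forall x)F(x)]$ would admit a finitary model of type $\mathcal{I}_{PA(\mathbb{N},\ SC)}$, forcing the \emph{same} arithmetical relation $F^{*}(x)$ to be both algorithmically computable as always true and not so---a contradiction. Once PA decides the sentence, soundness (items (a) and (b)) yields the biconditional immediately. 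So the paper's converse is a short completeness-style argument, not a proof-construction.

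Your proposed construction, by contrast, runs into exactly the obstacle you flag and does not resolve it. Representability and $\Sigma_{1}$-completeness give you PA-provability of each \emph{instance} $[F(n)]$, but not of $[(\forall x)F(x)]$; and your appeal to PA$_{9}$ is circular, since applying induction requires a PA-proof of $[(\forall x)(F(x)\rightarrow F(x'))]$, which is itself a universal statement of the very kind you are trying to obtain. The uniformity of $AL_{F}$ does not, by any standard representability argument, collapse this regress. The paper sidesteps the issue entirely by the model-theoretic contradiction above rather than by manufacturing a derivation.
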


\begin{proof}
We have by definition that $[(\forall x)F(x)]$ interprets as true under the interpretation $\mathcal{I}_{PA(\mathbb{N},\ SC)}$ if, and only if, $[F(x)]$ is algorithmically \textit{computable} as always true in $\mathbb{N}$.

\vspace{+1ex}
\noindent By \S \ref{sec:pa.axms.true.alg.cmp}, Theorem  \ref{sec:6.lem.5}, $\mathcal{I}_{PA(\mathbb{N},\ SC)}$ defines a finitary model of PA over $\mathbb{N}$ such that:

\begin{enumerate}
	\item[(a) ] If $[(\forall x)F(x)]$ is PA-provable, then $[F(x)]$ interprets as an arithmetic relation $F^{*}(x)$ which is algorithmically \textit{computable} as always true in $\mathbb{N}$;
	
	\item[(b) ] If $[\neg(\forall x)F(x)]$ is PA-provable, then it is not the case that $[F(x)]$ interprets as an arithmetic relation $F^{*}(x)$ which  is algorithmically \textit{computable} as always true in $\mathbb{N}$.
\end{enumerate}

Now, we cannot have that both $[(\forall x)F(x)]$ and $[\neg(\forall x)F(x)]$ are PA-unprovable for some PA formula $[F(x)]$, as this would yield the contradiction:

\begin{enumerate}
	\item[(i)] There is a finitary model---say $\mathcal{I}'_{PA(\mathbb{N},\ SC)}$---of PA+$[(\forall x)F(x)]$ in which $[F(x)]$ interprets as an arithmetic relation $F^{*}(x)$ that is algorithmically \textit{computable} as always true in $\mathbb{N}$.
	
	\item[(ii)] There is a finitary model---say $\mathcal{I}''_{PA(\mathbb{N},\ SC)}$---of PA+$[\neg(\forall x)F(x)]$ in which $[F(x)]$ interprets as an arithmetic relation $F^{*}(x)$ that is \textit{not} algorithmically \textit{computable} as always true in $\mathbb{N}$.
\end{enumerate}

\noindent The theorem follows.\hfill $\Box$
\end{proof}

We note that, contradicting current paradigms, PA has no \textit{non-standard} models\footnote{For reasons detailed in \cite{An21}, \S 17: \textit{The significance of \textit{evidence-based} reasoning for non-standard models of PA}.}, as the Provability Theorem for PA immediately entails\footnote{See also \cite{An16}, \textbf{Corollary 7.2}: PA is categorical with respect to algorithmic \textit{computability}.}:

\begin{corollary}
\label{cor:pa.categorical}
	PA is categorical. \hfill $\Box$
\end{corollary}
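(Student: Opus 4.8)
The plan is to extract syntactic completeness of PA from the proof of the Provability Theorem, and then convert this, together with strong consistency and the \emph{evidence-based} reading of what counts as a model, into uniqueness. First I would note that the argument establishing Theorem \ref{sec:6.3.thm.1} already yields more than is stated there: it shows that for no PA formula $[F(x)]$ can both $[(\forall x)F(x)]$ and $[\neg(\forall x)F(x)]$ be PA-unprovable. Since an arbitrary PA sentence $[A]$ is PA-provably equivalent to $[(\forall x)A]$ for any variable $x$ not occurring free in $[A]$, this gives that for every PA sentence $[A]$ at least one of $[A]$, $[\neg A]$ is PA-provable; combined with Theorem \ref{sec:6.thm.2} (strong consistency of PA), \emph{exactly} one of them is. That is, PA is negation-complete, and the set $T$ of PA-theorems is a maximal consistent set of PA-sentences.

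Next I would record the model-theoretic consequence. By Theorem \ref{sec:6.lem.5}, $\mathcal{I}_{PA(\mathbb{N},\ SC)}$ is a finitary model of PA; by Lemma \ref{sec:5.5.lem.3} it assigns to every PA sentence a definite (algorithmically computable) truth value; and by the Provability Theorem (Theorem \ref{sec:6.3.thm.1}) the sentences it satisfies are exactly the members of $T$. Now let $\mathcal{M}$ be \emph{any} finitary model of PA. By soundness every member of $T$ is true in $\mathcal{M}$; since $T$ is maximal consistent, no further sentence can be true in $\mathcal{M}$ either. Hence every finitary model of PA satisfies precisely $T$, i.e.\ is elementarily equivalent to $\mathcal{I}_{PA(\mathbb{N},\ SC)}$.

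Finally I would close the remaining gap — from elementary equivalence to isomorphism — using the \emph{evidence-based} framework itself. Classically this gap cannot be closed (L\"{o}wenheim--Skolem), but here the domain of a \emph{finitary} interpretation is taken to be exhausted by the denotations of the PA-numerals: to admit a domain element denoted by no numeral, i.e.\ a would-be non-standard element, one must invoke, via the existential quantifier, Aristotle's particularisation (Definition \ref{def:ap}), which has been rejected as non-finitary. So every finitary model of PA has domain $\{0,1,2,\ldots\}$ with the recursively defined successor, addition and multiplication, and the shared theory $T$ then pins the interpretation down up to isomorphism with the standard structure of the natural numbers; hence PA is categorical. This last step — arguing that discarding Aristotle's particularisation genuinely \emph{excludes} non-standard elements, rather than merely leaving their existence undecided — is where the real work (and the real exposure to objection) lies; everything preceding it is routine once the Provability Theorem is in hand.
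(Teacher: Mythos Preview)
The paper supplies no proof of this corollary beyond the $\Box$ and the phrase ``the Provability Theorem for PA immediately entails'', together with footnoted pointers to \cite{An16} (Corollary 7.2, ``PA is categorical with respect to algorithmic \textit{computability}'') and \cite{An21}, \S 17. Your reconstruction is therefore considerably more explicit than what the paper itself offers here. Your extraction of syntactic completeness from the proof of Theorem \ref{sec:6.3.thm.1} is accurate --- that proof does argue that for no $[F(x)]$ are both $[(\forall x)F(x)]$ and $[\neg(\forall x)F(x)]$ PA-unprovable --- and the passage to negation-completeness and thence to elementary equivalence of all finitary models is sound within the paper's framework. Your final step, arguing that the rejection of Aristotle's particularisation (Definition \ref{def:ap}) forces the domain of any \textit{finitary} interpretation to be exhausted by the numeral-denotations, is very much in the spirit of the paper's discussion of $\omega$-consistency and of \textit{specifiable} versus merely \textit{unspecified} objects, and is presumably what the external reference \cite{An21}, \S 17 is meant to supply; you are right to flag it as the load-bearing and most contestable move, but the paper itself does not carry that weight in the present text either.
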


\begin{quote}
	\footnotesize
	\textbf{Comment}: We further note, without proof\footnote{See, however, the three \textit{independent} proofs in \cite{An21}:
		
		\begin{enumerate}
			\item[(i)] \S 14.C, \textbf{Lemma 14.1}: The structure of the finite ordinals under any putative well-defined interpretation of ZF is not isomorphic to the structure $\mathbb{N}$ of the natural numbers;
			
			\item[(ii)] \S 18, \textbf{Corollary} 18.2: The relationship of terminating finitely with respect to the transfinitely defined ordinal relation `$>_0$' over the set of finite ordinals does not entail the relationship of terminating finitely with respect to the finitarily defined natural number relation `$>$' over the set of natural numbers;
			
			\item[(iii)] \S 18.a, \textbf{Theorem} 18.4: The subsystem ACA$_0$ of second-order arithmetic is \textit{not} a conservative extension of PA.	
		\end{enumerate}
		}, that the `categoricity' of PA reflects the circumstance that, contrary to inherited paradigms, the subsystem ACA$_0$ of second-order arithmetic is \textit{not} a conservative extension of PA.
	
	\vspace{+1ex}
	Consequently\footnote{See \cite{An21}, \S 13.E: \textit{Recognising the strengths and limitations of ZF and PA}.}, we may need to recognise explicitly that \textit{evidence-based} reasoning:
	
	\begin{itemize}
		\item[(a)] restricts the ability of highly expressive mathematical languages, such as the first-order Zermelo-Fraenkel Set Theory ZF, to \textit{categorically} communicate abstract concepts\footnote{Corresponding to Lakoff and N\'{u}\~{n}ez's conceptual metaphors in \cite{LR00}; see also \cite{An21}, \S 25: \textit{The significance of \textit{evidence-based} reasoning for Cognitive Science}.} such as those involving Cantor's first limit ordinal $\omega$\footnote{See \cite{LR00}, Preface, p.\textit{xii-xiii}: ``\textit{How can human beings understand the idea of actual infinity?}"};		
	\end{itemize}
	
	\noindent and:
	
	\begin{itemize}
		\item[(b)] restricts the ability of effectively communicating mathematical languages, such as the first-order Peano Arithmetic PA, to \textit{well-define} infinite concepts such as $\omega$\footnote{See \S \cite{An21}, 17.A.a: \textit{We cannot force PA to admit a transfinite ordinal}.}.
	\end{itemize}
\end{quote}

\subsection{G\"{o}del's `\textit{undecidable}' formula $[\neg(\forall x)R(x)]$ is provable in PA}
\label{sec:7}

Moreover, the Provability Theorem for PA further entails\footnote{See \cite{An16}, \textbf{Lemma 8.1}: If $\mathcal{I}_{PA(\mathbb{N},\ M)}$ defines a model of PA over $\mathbb{N}$, then there is a PA formula $[F]$ which is algorithmically \textit{verifiable} as always true over $\mathbb{N}$ under $\mathcal{I}_{PA(\mathbb{N},\ M)}$ even though $[F]$ is not PA-provable..} that:

\begin{lemma}
\label{sec:1.03.lem.2}
	If $\mathcal{I}_{PA(\mathbb{N},\ M)}$ defines a model of PA over $\mathbb{N}$, then there is a PA formula $[F]$ which is algorithmically \textit{verifiable} as always true over $\mathbb{N}$ under $\mathcal{I}_{PA(\mathbb{N},\ M)}$  even though $[F]$ is not PA-provable.
\end{lemma}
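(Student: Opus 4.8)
The plan is to exhibit such a formula concretely, taking $[F]$ to be G\"{o}del's quantifier-free formula $[R(x)]$ (or equivalently the universal closure $[(\forall x)R(x)]$), and to argue that its algorithmic \emph{verifiability} follows from the numeral-wise provability established by G\"{o}del, while its PA-unprovability follows from the Provability Theorem together with G\"{o}del's first incompleteness construction. First I would recall that, by G\"{o}del's argument in \cite{Go31}, Theorem VI, the formula $[(\forall x)R(x)]$ (with G\"{o}del number $17\,Gen\,r$) is not PA-provable, and that for every \emph{specified} numeral $[n]$ the instance $[R(n)]$ is PA-provable. Since $xBy$ is primitive recursive, for each specified $n$ there is a deterministic algorithm that locates the proof of $[R(n)]$ in PA and hence decides $R^{*}(n)$ as true; running such algorithms for $n = 1, 2, \ldots, k$ gives, for any specified $k$, a deterministic algorithm $AL_{(R,\,k)}$ deciding each proposition in the finite sequence $\{R^{*}(1), \ldots, R^{*}(k)\}$. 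By Definition \ref{sec:2.def.1} this is exactly algorithmic \emph{verifiability} of $R^{*}(x)$, and hence, by Definition \ref{def:ebr.weak.qntfcn}, $[(\forall x)R(x)]$ is algorithmically \emph{verifiable} as always true under any interpretation $\mathcal{I}_{PA(\mathbb{N},\,M)}$ that defines a model of PA over $\mathbb{N}$.

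Next I would close the loop on unprovability. If $[(\forall x)R(x)]$ \emph{were} PA-provable, then it would be a theorem, contradicting G\"{o}del's Theorem VI (given the consistency of PA, which is secured here by Theorem \ref{sec:6.thm.2}); so $[(\forall x)R(x)]$, equivalently $[R(x)]$ read as a schema-free universally quantified assertion, is not PA-provable. Combining the two halves, $[F] := [(\forall x)R(x)]$ is algorithmically \emph{verifiable} as always true over $\mathbb{N}$ under $\mathcal{I}_{PA(\mathbb{N},\,M)}$ yet not PA-provable, which is the assertion of the lemma. One should also note the consistency of this picture with the Provability Theorem: by Theorem \ref{sec:6.3.thm.1}, PA-unprovability of $[(\forall x)R(x)]$ means precisely that $R^{*}(x)$ is \emph{not} algorithmically \emph{computable} as always true, so there is no tension between being algorithmically verifiable (weakly) and failing to be algorithmically computable (strongly) --- this is exactly the gap that Theorem 2.1 / \cite{An21}, Theorem 7.2 asserts exists.

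The main obstacle, and the step I would be most careful about, is the move from ``for each \emph{specified} $n$, $[R(n)]$ is PA-provable'' to ``there is a \emph{deterministic} algorithm $AL_{(R,\,n)}$ deciding the finite sequence'' --- i.e., verifying that the witnessing procedure really meets Definition \ref{sec:2.def.1} rather than smuggling in a non-constructive choice. Here I would lean on the primitive recursiveness of the proof predicate $xBy$: since G\"{o}del's meta-argument on p.26(2) of \cite{Go31} actually furnishes, for each $n$, an effective PA-derivation of $[R(n)]$, a single deterministic search over proof codes (bounded once we know a proof exists, or dovetailed if we only know existence) halts and returns the proof, hence decides $R^{*}(n)$. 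For the finite sequence $\{R^{*}(1), \ldots, R^{*}(n)\}$ this is just $n$ successive such searches, so $AL_{(R,\,n)}$ is genuinely deterministic and finite, as Definition \ref{sec:2.def.1} requires. The subtlety --- and the reason the formula is \emph{not} algorithmically computable --- is that no single algorithm handles the \emph{denumerable} sequence uniformly, precisely because $[(\forall x)R(x)]$ is PA-unprovable; I would make sure the proof does not inadvertently claim such uniformity.
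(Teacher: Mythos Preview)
Your proposal is correct and follows essentially the same route as the paper: exhibit G\"{o}del's $[R(x)]$ (equivalently $[(\forall x)R(x)]$), invoke G\"{o}del's Theorem VI for PA-unprovability together with the finitary consistency result (Theorem \ref{sec:6.thm.2}), and derive algorithmic verifiability from numeral-wise provability via the primitive recursiveness of $xBy$. Your treatment is in fact more explicit than the paper's---you spell out the construction of $AL_{(R,\,n)}$ and flag the verifiability/computability gap---but the underlying argument is the same.
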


\begin{proof} G\"{o}del has shown how to construct a quantifier-free arithmetical formula with a single variable---say $[R(x)]$\footnote{G\"{o}del refers to the formula $[R(x)]$ only by its G\"{o}del number $r$ (\cite{Go31}, p.25(12)).}---such that, if PA is consistent, then $[R(x)]$ is not PA-provable\footnote{G\"{o}del's aim in \cite{Go31} was to show that $[(\forall x)R(x)]$ is not P-provable; by Generalisation it follows, however, that $[R(x)]$ is also not P-provable.}, but $[R(n)]$ is instantiationally (i.e., numeral-wise) PA-provable for any given PA numeral $[n]$. Since PA is consistent by \S \ref{sec:pa.axms.true.alg.cmp}, Theorem \ref{sec:6.thm.2}, for any given numeral $[n]$, G\"{o}del's primitive recursive relation $xB \ulcorner [R(n)] \urcorner$\footnote{Where $\ulcorner [R(n)] \urcorner$ denotes the G\"{o}del-number of the PA formula $[R(n)]$.} must hold for some $x$. The lemma follows.\hfill $\Box$
\end{proof}

The Provability Theorem for PA also immediately entails the following, \textit{finitary}, arithmetical consequences\footnote{Corresponding to \cite{An16}, Corollaries 8.2 to 8.4.}, which contradict current, implicitly \textit{non-finitary} and \textit{essentially} set-theoretical, paradigms that appeal to G\"{o}del's \textit{informal}\footnote{And misleading, as detailed in \cite{An21}, \S 14, \textit{G\"{o}del 1931 in hindsight}.} interpretation of his own \textit{formal} reasoning in \cite{Go31}:

\begin{corollary}
\label{sec:6.3.cor.1}
	The PA formula $[\neg(\forall x)R(x)]$ defined in Lemma \ref{sec:1.03.lem.2} is PA-provable. \hfill $\Box$
\end{corollary}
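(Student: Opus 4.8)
The plan is to read off the PA-provability of $[\neg(\forall x)R(x)]$ from the Provability Theorem for PA (Theorem \ref{sec:6.3.thm.1}), fed by G\"{o}del's construction as recalled in Lemma \ref{sec:1.03.lem.2} together with the strong consistency of PA (Theorem \ref{sec:6.thm.2}). The heart of the argument is the observation already made inside the proof of Theorem \ref{sec:6.3.thm.1}, namely that for \emph{no} PA formula $[F(x)]$ can both $[(\forall x)F(x)]$ and $[\neg(\forall x)F(x)]$ be PA-unprovable; specialising this to $[F(x)] := [R(x)]$ will do the job, once we know $[(\forall x)R(x)]$ itself is unprovable.

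Concretely I would proceed as follows. First, since PA is consistent (Theorem \ref{sec:6.thm.2}), Lemma \ref{sec:1.03.lem.2} gives that $[R(x)]$ is not PA-provable; hence, by Generalisation (were $[R(x)]$ PA-provable so would be $[(\forall x)R(x)]$, and conversely by instantiation), the closed formula $[(\forall x)R(x)]$ is also not PA-provable. Second, I would invoke the ``not both unprovable'' step from the proof of Theorem \ref{sec:6.3.thm.1}: were $[\neg(\forall x)R(x)]$ also PA-unprovable, there would be a finitary model of PA$+[(\forall x)R(x)]$ in which $R^{*}(x)$ is algorithmically \emph{computable} as always true and, simultaneously, a finitary model of PA$+[\neg(\forall x)R(x)]$ in which $R^{*}(x)$ is \emph{not} algorithmically \emph{computable} as always true---impossible, since by Corollary \ref{cor:pa.categorical} the finitary interpretation $\mathcal{I}_{PA(\mathbb{N},\ SC)}$ is essentially the only one available and so the status of $R^{*}(x)$ as ``algorithmically computable as always true'' is fixed once and for all. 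Therefore $[\neg(\forall x)R(x)]$ is PA-provable. As a cross-check I would also run the argument straight through the definitions: by the contrapositive of Theorem \ref{sec:6.3.thm.1}, $[R(x)]$ not being PA-provable means $R^{*}(x)$ is not algorithmically \emph{computable} as always true in $\mathbb{N}$, so by Definition \ref{def:strong.neg} (strong negation) $[\neg(\forall x)R(x)]$ is algorithmically \emph{computable} as true under $\mathcal{I}_{PA(\mathbb{N},\ SC)}$, hence holds in the finitary model furnished by Theorem \ref{sec:6.lem.5}.

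I expect the delicate point to be precisely the ``not both unprovable'' step, since this is exactly where the treatment parts company with the classical picture---classically $[(\forall x)R(x)]$ is \emph{the} textbook formally undecidable sentence. I would therefore make explicit that the step rests on reading the Provability Theorem as the equivalence between PA-provability of $[(\forall x)F(x)]$ and algorithmic \emph{computability} of $F^{*}(x)$ as always true in $\mathbb{N}$, with $\mathcal{I}_{PA(\mathbb{N},\ SC)}$ as the unique finitary model, and that nothing is being borrowed from Aristotle's particularisation or from the algorithmically \emph{verifiable} interpretation $\mathcal{I}_{PA(\mathbb{N},\ SV)}$: the formula $[R(x)]$ stays algorithmically \emph{verifiable} as always true (Lemma \ref{sec:1.03.lem.2}), and it is exactly this split between \emph{verifiability} and \emph{computability} that lets $[\neg(\forall x)R(x)]$ be PA-provable without any contradiction.
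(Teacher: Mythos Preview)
Your proposal is correct and matches the paper's intent: the paper offers no explicit proof here (just a $\Box$), declaring the corollary an immediate consequence of the Provability Theorem for PA, and what you have written is precisely the natural way to cash that out---$[(\forall x)R(x)]$ is PA-unprovable by G\"{o}del's construction (via Lemma \ref{sec:1.03.lem.2} and Theorem \ref{sec:6.thm.2}), and then the dichotomy established inside the proof of Theorem \ref{sec:6.3.thm.1} forces $[\neg(\forall x)R(x)]$ to be PA-provable. Your cross-check through Definition \ref{def:strong.neg} is a welcome addition but not something the paper makes explicit.
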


\begin{corollary}
\label{sec:6.3.cor.1.1}
	In any \textit{well-defined} model of PA, G\"{o}del's arithmetical formula $[R(x)]$ interprets as an algorithmically verifiable, but not algorithmically computable, tautology over $\mathbb{N}$.
\end{corollary}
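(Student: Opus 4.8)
The plan is to assemble Corollary~\ref{sec:6.3.cor.1.1} from three ingredients already established in the excerpt: the Provability Theorem for PA (Theorem~\ref{sec:6.3.thm.1}), Corollary~\ref{sec:6.3.cor.1} (that $[\neg(\forall x)R(x)]$ is PA-provable), and Lemma~\ref{sec:1.03.lem.2} together with the machinery of G\"{o}del's primitive recursive relation $x B y$. First I would recall G\"{o}del's construction: $[R(x)]$ is quantifier-free with a single free variable, and for every specified numeral $[n]$ the closed formula $[R(n)]$ is PA-provable (hence, since PA is strongly consistent by Theorem~\ref{sec:6.thm.2}, true in every well-defined model). The goal is to read off two facts about the Boolean relation $R^{*}(x)$ to which $[R(x)]$ interprets over $\mathbb{N}$: that $R^{*}(x)$ is algorithmically \emph{verifiable} as always true, and that it is \emph{not} algorithmically \emph{computable} as always true.

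For algorithmic verifiability, I would argue as in Lemma~\ref{sec:1.03.lem.2}: given any specified natural number $n$, since $[R(n)]$ is PA-provable and PA is consistent, G\"{o}del's primitive recursive relation $x B \ulcorner [R(n)] \urcorner$ holds for some $x$; searching over $x$ until such a proof code is found is a deterministic algorithm $AL_{(R,\,n)}$ that decides the truth of each member of the finite sequence $\{R^{*}(1),\dots,R^{*}(n)\}$ — indeed it confirms each is true. By Definition~\ref{sec:2.def.1} this is exactly algorithmic verifiability of $R^{*}(x)$, and since every such $R^{*}(n)$ comes out true, $R^{*}(x)$ is algorithmically verifiable as \emph{always} true; by Definition~\ref{def:const.wd.nt.seq} the Boolean sequence is well-defined, so $[R(x)]$ is a tautology in the algorithmically verifiable sense.

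For the negative half — that $R^{*}(x)$ is \emph{not} algorithmically computable as always true — I would invoke the Provability Theorem for PA contrapositively. By Corollary~\ref{sec:6.3.cor.1}, $[\neg(\forall x)R(x)]$ is PA-provable; hence $[(\forall x)R(x)]$ is \emph{not} PA-provable (PA being consistent). By Theorem~\ref{sec:6.3.thm.1}, a PA formula $[F(x)]$ is PA-provable iff $[F(x)]$ is algorithmically computable as always true in $\mathbb{N}$; applying this to $[F(x)] = [R(x)]$ and using the non-provability of $[(\forall x)R(x)]$ (equivalently invoking the Comment after Definition~\ref{def:strong.neg}: \emph{strong negation} entails $R^{*}(x)$ is not algorithmically computable as always true), we conclude $[R(x)]$ is not algorithmically computable as a tautology. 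Combining the two halves, and noting that this holds under \emph{any} well-defined interpretation of PA over $\mathbb{N}$ (since $[R(n)]$'s provability, and the non-provability of $[(\forall x)R(x)]$, are interpretation-independent facts about PA), gives the corollary.

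The main obstacle I expect is conceptual rather than computational: one must be scrupulous that ``algorithmically verifiable as always true'' and ``algorithmically computable as always true'' are being applied to the \emph{interpreted relation} $R^{*}(x)$ and not conflated with the syntactic provability of $[(\forall x)R(x)]$ or of the individual $[R(n)]$'s — the whole force of the paper rests on keeping these apart. A subsidiary care point is that the verifying algorithm $AL_{(R,\,n)}$ is allowed to depend on $n$ (it does — each is a bounded proof search), which is precisely what Definition~\ref{sec:2.def.1} permits and what Definition~\ref{sec:2.def.2} forbids; making that dependence explicit is what cleanly separates the positive from the negative half. No delicate estimates are needed; the work is in citing the right earlier result for each clause.
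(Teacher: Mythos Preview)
Your proposal is correct and follows essentially the same approach as the paper's own proof, which tersely cites G\"{o}del for the algorithmic verifiability of $[R(x)]$ as a tautology and then invokes Corollary~\ref{sec:6.3.cor.1} for the failure of algorithmic computability. You have simply made explicit the intermediate steps the paper leaves implicit---the proof-search mechanism via $xB\ulcorner[R(n)]\urcorner$ for verifiability, and the appeal to the Provability Theorem (or equivalently strong negation) to pass from the PA-provability of $[\neg(\forall x)R(x)]$ to the non-computability of $R^{*}(x)$ as always true.
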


\begin{proof} G\"{o}del has shown that $[R(x)]$\footnote{G\"{o}del refers to the formula $[R(x)]$ only by its G\"{o}del number $r$; \cite{Go31}, p.25, eqn.12.} always interprets as an algorithmically \textit{verifiable} tautology over $\mathbb{N}$\footnote{\cite{Go31}, p.26(2): ``$(n)\neg(nB_{\kappa}(17Gen\ r))$ holds"}. By Corollary \ref{sec:6.3.cor.1} $[R(x)]$ is not algorithmically computable as always true in $\mathbb{N}$.\hfill $\Box$
\end{proof}

\begin{corollary}
\label{sec:6.3.cor.2}
	PA is \textit{not} $\omega$-consistent.
\end{corollary}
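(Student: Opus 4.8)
The plan is to read the result off directly from the definition of $\omega$-consistency recalled in \S\ref{sec:intro.prov.thm.pvnp}, together with the two facts already established about G\"odel's formula $[R(x)]$. Recall that PA is $\omega$-consistent if, and only if, there is \emph{no} PA-formula $[F(x)]$ such that both $[\neg(\forall x)F(x)]$ is PA-provable \emph{and} $[F(a)]$ is PA-provable for every specified PA-term $[a]$. So it suffices to exhibit one such witnessing $[F(x)]$.

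I would take $[F(x)]$ to be G\"odel's quantifier-free formula $[R(x)]$ (the formula with G\"odel number $r$ in eqn.\,12 of \cite{Go31}). By Corollary \ref{sec:6.3.cor.1}, the formula $[\neg(\forall x)R(x)]$ is PA-provable, which discharges the first conjunct of the $\omega$-inconsistency condition. For the second conjunct, Lemma \ref{sec:1.03.lem.2} --- equivalently, G\"odel's numeral-wise provability on p.\,26(2) of \cite{Go31} --- gives that $[R(n)]$ is PA-provable for every specified PA-numeral $[n]$; since every PA-numeral is a specified PA-term, this supplies exactly the witnesses the definition asks for. Hence $[R(x)]$ witnesses the $\omega$-inconsistency of PA, and the corollary follows.

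I do not expect any genuine obstacle at this step: the whole weight of the argument has already been carried by Corollary \ref{sec:6.3.cor.1}, whose proof in turn rests on the Provability Theorem for PA (Theorem \ref{sec:6.3.thm.1}) via Strong Negation (Definition \ref{def:strong.neg}). The only point worth a line of care is matching the phrase `for any specified S-term' in the definition of $\omega$-consistency with the phrase `for any given PA numeral' in G\"odel's instantiational provability --- but these coincide here, since the numerals $[0],[1],[2],\ldots$ are precisely the specified closed terms over which the witnessing instances $[R(n)]$ range. As a sanity check one should note that this is entirely consistent with PA being (strongly, hence simply) consistent by Theorem \ref{sec:6.thm.2}: $\omega$-inconsistency does not contradict simple consistency, and indeed this framework predicts exactly that combination, precisely because Aristotle's particularisation is not assumed to hold over $\mathbb{N}$.
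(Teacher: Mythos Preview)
Your proposal is correct and follows essentially the same route as the paper's own proof: both invoke G\"odel's numeral-wise provability of $[R(n)]$ together with Corollary~\ref{sec:6.3.cor.1} (that $[\neg(\forall x)R(x)]$ is PA-provable), and then read off $\omega$-inconsistency directly from the definition recalled in \S\ref{sec:intro.prov.thm.pvnp}. Your additional remarks about matching `specified S-term' with `PA numeral' and about compatibility with simple consistency (Theorem~\ref{sec:6.thm.2}) are sound elaborations, not a different method.
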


\begin{proof} G\"{o}del has shown that if PA is consistent, then $[R(n)]$ is PA-provable for any given PA numeral $[n]$\footnote{\cite{Go31}, p.26(2).}. By Corollary \ref{sec:6.3.cor.1} and the definition of $\omega$-consistency, if PA is consistent then it is \textit{not} $\omega$-consistent.\hfill $\Box$
\end{proof}

\begin{quote}
\footnotesize
	\textbf{Comment}: We prove Corollary \ref{sec:6.3.cor.2} independently in \cite{An21}, \S 12.B.f, Theorem 12.6. We note that this conclusion is contrary to accepted dogma. See, for instance, Martin Davis' remarks that:

	\vspace{+1ex}
	 ``\ldots there is no equivocation. Either an adequate arithmetical logic is $\omega$-inconsistent (in which case it is possible to prove false statements within it) or it has an unsolvable decision problem and is subject to the limitations of G\"{o}del's incompleteness theorem". \\ \textit{\tiny{\ldots Davis: \cite{Da82}, p.129(iii).}}
	 
	 \vspace{+1ex}
	 \textbf{Comment}: We note that Rosser's appeal to his Rule C, in his proof \cite{Ro36} claiming that simple consistency suffices for establishing a `formally undecidable' arithmetical formula (which involves an existential quantifier) in $P$, implicitly entails that $P$ is $\omega$-consistent since, by \S \ref{sec:intro.prov.thm.pvnp}, Definition \ref{def:ap}:
	 
	 \begin{quote}								
	 	\begin{corollary}
	 	\label{lem:ruleC.equiv.omega.cons}
	 		Rosser's Rule $C$ \textit{entails} G\"{o}del's $\omega$-consistency.
	 		
	 		\index{G\"{o}del!$\omega$-consistency}%
	 		\index{$\omega$-consistency!G\"{o}del}%
	 	\end{corollary}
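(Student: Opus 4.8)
The plan is to read Rosser's Rule $C$ over the domain $\mathbb{N}$ and to observe that it is nothing but an instance of Aristotle's particularisation (Definition \ref{def:ap}), and then to appeal to the implication --- noted above in the body and established in \cite{An21}, Corollary 8.9 --- that if $S$ is consistent and Aristotle's particularisation holds over $\mathbb{N}$, then $S$ is $\omega$-consistent. Chaining these two facts gives the corollary.

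First I would recall Rule $C$ in the form used in \cite{Ro36} (and in \cite{Me64}): from a derivation of $[(\exists x)\phi(x)]$ one may introduce a fresh constant $c$ and adjoin $[\phi(c)]$, provided $c$ does not occur in the concluding formula. The point on which everything turns is that, \emph{as a device of the classical predicate calculus}, Rule $C$ is eliminable and hence adds no theorems; what it actually supplies is a licence to read the premise $[(\exists x)\phi(x)]$, i.e. $[\neg(\forall x)\neg\phi(x)]$, as asserting the existence of a genuine --- though unspecified --- object $s$ of the domain for which $\phi^{*}(s)$ holds, so that $c$ may legitimately be taken to name such an $s$. When the domain is $\mathbb{N}$ this reading is, word for word, the content of Definition \ref{def:ap}. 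Hence every appeal to Rule $C$ in Rosser's treatment of $[(\forall y)(Q(h,y) \rightarrow (\exists z)(z \leq y \wedge S(h,z)))]$ interpreted over $\mathbb{N}$ --- in particular the step that infers from a provable $[(\exists z)(z \leq y \wedge S(h,z))]$ that there is an actual numeral $\leq y$ satisfying the matrix --- presupposes that Aristotle's particularisation holds over $\mathbb{N}$ for the arithmetic in question.

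It then remains only to close the chain, which I would do as in \cite{An21}, Corollary 8.9. Suppose $S$ (Gödel's $P$, or PA) is consistent and Aristotle's particularisation holds over $\mathbb{N}$, while $S$ is $\omega$-inconsistent with witness $[F(x)]$ --- so $[\neg(\forall x)F(x)]$ is $S$-provable and $[F(a)]$ is $S$-provable for every specified term $[a]$. Take $G(x) := [\neg F(x)]$; then $[\neg(\forall x)\neg G(x)]$ is $S$-provable, hence true over $\mathbb{N}$, so by Definition \ref{def:ap} there is a well-definable $s \in \mathbb{N}$ with $\neg F^{*}(s)$ true; but such an $s$ is denoted by a numeral $[\bar{s}]$ and is therefore a specified term, whence $[F(\bar{s})]$ is $S$-provable and so $F^{*}(s)$ is true --- a contradiction. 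Combining this with the identification of the previous paragraph yields the corollary: anyone who, like Rosser, invokes Rule $C$ when interpreting his undecidable formula over $\mathbb{N}$ has thereby conceded $\omega$-consistency.

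I expect the main obstacle to be the \emph{first} step, not the chain to $\omega$-consistency. Because Mendelson's admissibility result shows that Rule $C$ proves no new theorems of FOL, the entire force of ``Rule $C$ entails $\omega$-consistency'' resides in how the rule is \emph{used} inside Rosser's incompleteness argument; the delicate part is to make precise --- by locating the exact inference in \cite{Ro36}, Theorem II --- that the witnessing constant must there be treated as a specifiable element of $\mathbb{N}$ and cannot be discharged within merely simple consistency, so that the use of Rule $C$ is genuinely an application of Definition \ref{def:ap} rather than of its eliminable proof-theoretic shadow. Once that identification is granted, the remainder is the routine contradiction above.
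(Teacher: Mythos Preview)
Your proposal is correct and follows essentially the same approach as the paper: identify the use of Rule~$C$ with an appeal to Aristotle's particularisation over $\mathbb{N}$, then invoke the implication (from \cite{An21}, Corollary~8.9) that consistency together with Aristotle's particularisation yields $\omega$-consistency. The paper's own proof is a one-sentence sketch of exactly this chain; your version simply spells out both links in more detail, and your closing remark about the eliminability of Rule~$C$ in FOL versus its semantic force in Rosser's actual argument is a helpful clarification that the paper leaves implicit.
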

	 	
	 	\begin{proof} If $P$ is simply consistent, the introduction of an \textit{unspecified} $P$-term into the formal reasoning under Rule C entails Aristotle's particularisation in any interpretation of $P$, which in turn entails that $P$ is $\omega$-consistent. The corollary follows. \hfill $\Box$
	 	\end{proof}
	 \end{quote}
	 
	 \textbf{Rosser's Rule C} (\textit{Excerpted from Mendelson \cite{Me64}, p.73-74, \S 7, Rule C; see also \cite{Ro53}, pp.127-130.})
	 
	 \begin{quote}
	 	\noindent ``It is very common in mathematics to reason in the following way. Assume that we have proved a wf of the form $(Ex)\mathcal{A}(x)$. Then, we say, let $b$ be an object such that $\mathcal{A}(b)$. We continue the proof, finally arriving at a formula which does not involve the arbitrarily chosen element $b$. \ldots
	 	
	 	\vspace{+1ex}
	 	\noindent In general, any wf which can be proved using arbitrary acts of choice, can also be proved without such acts of choice. We shall call the rule which permits us to go from $(Ex)\mathcal{A}(x)$ to $\mathcal{A}(b)$, Rule C (``C" for ``choice")."
	 \end{quote}
\end{quote}

	\section{The significance of the Provability Theorem for PA for the P$v$NP problem}
	\label{sec:significance.prov.thm.pvnp}
	
	From the \textit{evidence-based} perspective of this investigation (expressed explicitly as a Complementarity Thesis in \cite{An21}, \S 1, Thesis 1), the significance of the Provability Theorem for PA (\S \ref{sec:6.3.a}, Theorem \ref{sec:6.3.thm.1}) for the P$v$NP problem is that (compare \S \ref{sec:7}, Corollary \ref{sec:6.3.cor.1.1}):
	
	\begin{theorem}[First Tautology Theorem]
	\label{thm:first.tautology.thm}
		There is no Turing-machine, deterministic or non-deterministic, that can evidence G\"{o}del's arithmetical relation \(R^{*}(x)\)---when treated as a Boolean function---as a tautology.
	\end{theorem}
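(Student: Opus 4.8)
The plan is to separate the claim into its deterministic and its non-deterministic halves, and to reduce the second to the first. The deterministic half is already essentially established: by \S \ref{sec:7}, Corollary \ref{sec:6.3.cor.1.1}, G\"{o}del's relation $R^{*}(x)$ interprets over $\mathbb{N}$ as an algorithmically \textit{verifiable}, but \textit{not} algorithmically \textit{computable}, tautology; and, by \S \ref{sec:intro.prov.thm.pvnp}, Definitions \ref{sec:2.def.2} and \ref{def:det.alg}, the only admissible witness to the algorithmic \textit{computability} of a relation is a deterministic algorithm, i.e.\ a deterministic Turing machine. Hence there is no deterministic Turing machine which provides objective evidence that every proposition in the denumerable sequence $\{R^{*}(1),\ R^{*}(2),\ \ldots\}$ holds; equivalently, no deterministic Turing machine can evidence $R^{*}(x)$---treated as a Boolean function---as a tautology.

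For the non-deterministic half I would argue by contradiction. Suppose some non-deterministic Turing machine $N$ evidences $R^{*}(x)$ as a tautology; unpacking this, suppose that for every $n \in \mathbb{N}$ the machine $N$ on input $n$ has a halting, accepting computation branch that certifies $R^{*}(n)$ as true. Since the computation tree of $N$ on a fixed input is finitely branching, a deterministic Turing machine $M$ can, on input $n$, dovetail that tree in breadth-first order and halt---outputting ``true''---as soon as it meets such an accepting branch; by hypothesis this search terminates for every $n$. Thus $M$ is a deterministic algorithm $AL_{R^{*}}$ that provides objective evidence for deciding the truth of each proposition in $\{R^{*}(1),\ R^{*}(2),\ \ldots\}$, so $R^{*}(x)$ would be algorithmically \textit{computable} as always true in $\mathbb{N}$, contradicting \S \ref{sec:7}, Corollary \ref{sec:6.3.cor.1.1}. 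Equivalently---and more in the spirit of this paper---such an $M$ would witness that $[(\forall x)R(x)]$ is algorithmically \textit{computable} as always true, whence by the Provability Theorem for PA (\S \ref{sec:6.3.a}, Theorem \ref{sec:6.3.thm.1}) the formula $[(\forall x)R(x)]$ would be PA-provable, contradicting \S \ref{sec:7}, Corollary \ref{sec:6.3.cor.1} (that $[\neg(\forall x)R(x)]$ is PA-provable) together with the consistency of PA established in \S \ref{sec:pa.axms.true.alg.cmp}, Theorem \ref{sec:6.thm.2}. Combining the two halves yields the theorem.

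The step I expect to be the main obstacle is not any calculation but the conceptual one of pinning down, and defending, exactly what it should mean for a \textit{non}-deterministic Turing machine to ``evidence a Boolean function as a tautology'': a tautology is a proposition quantified over \textit{all} arguments, whereas a single non-deterministic computation is a finitary object bearing on one argument. The point to secure is that this notion cannot outrun its deterministic counterpart---once one adopts the reading that an evidencing machine must, for \textit{each} $n$, possess a finite accepting computation certifying $R^{*}(n)$, the uniform breadth-first search above determinises it, and the classical fact that any set decidable by a non-deterministic Turing machine is decidable by a deterministic one supplies the rest. It is exactly this collapse that the \textit{evidence-based} identification of ``deterministic algorithm'' with a Brouwer-Heyting-Kolmogorov \textit{realizer} (\S \ref{sec:intro.prov.thm.pvnp}, Definition \ref{def:det.alg}) is meant to legitimise; making that identification carry the required weight is the delicate part of the argument.
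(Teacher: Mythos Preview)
Your argument is correct and, in overall shape, matches the paper's: both reduce the non-deterministic case to the deterministic one, and then derive the deterministic impossibility from the non-PA-provability of $[R(x)]$ via the Provability Theorem for PA (equivalently, from Corollary \ref{sec:6.3.cor.1.1}).

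The one genuine difference is \emph{how} the non-deterministic-to-deterministic collapse is justified. The paper simply asserts, as a consequence of the Provability Theorem for PA, that ``any computational problem which can be solved by an algorithm that defines a \textit{deterministic} Turing-machine, can also be solved by some algorithm which defines a \textit{non-deterministic} Turing-machine, and vice versa'', and leaves it at that. You instead supply the classical breadth-first simulation: given a non-deterministic $N$ with an accepting branch on every input, dovetail its computation tree to obtain a deterministic $M$, whence $R^{*}(x)$ would be algorithmically \textit{computable} as always true. Your route is more explicit and more self-contained; the paper's route is terser but leaves the reader to work out in what sense the Provability Theorem (a statement about PA-provability and deterministic algorithmic computability) actually delivers the det/nondet equivalence. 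Your closing paragraph also makes explicit the conceptual point the paper glosses over---namely, what it \emph{means} for a non-deterministic machine to ``evidence a tautology''---and your chosen reading (an accepting branch for each $n$) is exactly what is needed to make the simulation go through.
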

	
	\begin{proof}	
		In his seminal 1931 paper \cite{Go31}, G\"{o}del has constructed a PA-formula $[(\forall x)R(x)]$ such that $[R(n)]$ is \textit{without} quantifiers and PA-provable for any \textit{specified} PA-numeral $[n]$. Hence, under any \textit{well-defined} interpretation of PA over $\mathbb{N}$, $[R(x)]$ interprets as a tautological (\textit{quantifier-free}) arithmetical relation \(R^{*}(x)\) which is true for any \textit{specified} natural number \(n\). 
		
		\vspace{+1ex}
		However, since the corresponding PA-formula [\(R(x)\)]\footnote{G\"{o}del defines, and refers to, this formula by its G\"{o}del-number \(r\) (cf.\ \cite{Go31}, p25, eqn.12).} is not PA-provable (cf.\ \cite{Go31}, p25(1))---and it follows from the Provability Theorem for PA that any computational problem which can be solved by an algorithm that defines a \textit{deterministic} Turing-machine, can also be solved by some algorithm which defines a \textit{non-deterministic} Turing-machine, and vice versa---there can be no Turing-machine, deterministic or non-deterministic, that could evidence \(R^{*}(x)\) as a tautology (\textit{i.e., as true for any \textit{specified} natural number} \(n\)). \hfill $\Box$
	\end{proof}
	
	\begin{quote}
		\footnotesize
		\textbf{Comment}: By Generalisation\footnote{\textit{Generalisation in PA}: [\((\forall x)A\)] follows from [\(A\)].}, stating that the PA-formula [\(R(x)\)] \textit{is not PA-provable} is equivalent to stating that the PA-formula [\((\forall x)R(x)\)]\footnote{G\"{o}del defines, and refers to, this formula by its G\"{o}del-number \(17Gen\ r\) (cf.\ \cite{Go31}, p25, eqn.13).} \textit{is not PA-provable}; the latter is what G\"{o}del actually proved in \cite{Go31}.
	\end{quote}
	
	\noindent We also have, further, that:
	
	\begin{theorem}[Second Tautology Theorem]
		\label{thm:second.tautology.thm}
		G\"{o}del's arithmetical relation \(R^{*}(x)\) is algorithmically \textit{verifiable} as a tautology.
	\end{theorem}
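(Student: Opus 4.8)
The plan is to re-package the \emph{verifiability} half of \S\ref{sec:7}, Corollary \ref{sec:6.3.cor.1.1}, directly in terms of the Boolean function $R^{*}(x)$ and the criterion of \S\ref{sec:intro.prov.thm.pvnp}, Definition \ref{sec:2.def.1}. First I would collect the two facts already in hand. (i) By \S\ref{sec:pa.axms.true.alg.cmp}, Theorem \ref{sec:6.thm.2}, PA is consistent; hence, by \S\ref{sec:7}, Lemma \ref{sec:1.03.lem.2} (equivalently, by G\"{o}del's reasoning on p.26(2) of \cite{Go31}), for \emph{every specified} PA-numeral $[n]$ the quantifier-free formula $[R(n)]$ is PA-provable, so that G\"{o}del's primitive recursive relation $xB\ulcorner [R(n)] \urcorner$ holds for some $x$. (ii) Since, by \S\ref{sec:pa.axms.true.alg.cmp}, Theorem \ref{sec:6.lem.5}, $\mathcal{I}_{PA(\mathbb{N},\ SC)}$ is a finitary model of PA over $\mathbb{N}$, the PA-provability of each sentence $[R(n)]$ forces the Boolean value $R^{*}(n)$ to be true in $\mathbb{N}$ for every specified $n$ --- or, more elementarily: were $R^{*}(n)$ false, then $[\neg R(n)]$ would be PA-provable, contradicting the consistency just invoked.

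Next I would exhibit, for an arbitrary \emph{specifiable} natural number $n$, the deterministic algorithm $AL_{(R,\ n)}$ required by Definition \ref{sec:2.def.1}. On input $m \leq n$ it runs the bounded search for the least $x$ satisfying the primitive recursive relation $xB\ulcorner [R(m)] \urcorner$ and, on finding it, returns the verdict \emph{true}; termination at every such $m$ is guaranteed by (i), and that \emph{true} is the correct verdict for each member of the finite sequence $\{R^{*}(1),\ R^{*}(2),\ \ldots,\ R^{*}(n)\}$ is guaranteed by (ii). (Since $R^{*}$ is itself primitive recursive one could equally let $AL_{(R,\ n)}$ simply evaluate $R^{*}(m)$; the proof-search presentation is used only to make explicit that the evidence is of the kind sanctioned in \S\ref{sec:intro.prov.thm.pvnp}.) Thus $R^{*}(x)$ meets Definition \ref{sec:2.def.1}, and since every verdict returned is \emph{true}, $R^{*}(x)$ is algorithmically \emph{verifiable as always true} --- i.e., as a tautology when $R^{*}$ is treated as a Boolean function --- which is the assertion of the theorem.

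The delicate point, and the only real obstacle, is to keep the two quantifiers over $n$ apart. Definition \ref{sec:2.def.1} asks only that \emph{for each specified} $n$ there \emph{be} a deterministic algorithm $AL_{(R,\ n)}$, and the justification that $AL_{(R,\ n)}$ halts at every $m \leq n$ is supplied afresh, \emph{per} $n$, by the meta-mathematical fact of numeral-wise provability; the argument must \emph{not} be allowed to slide into the stronger claim that a \emph{single} algorithm $AL_{R}$ decides $R^{*}(m)$ as true for \emph{all} $m$ simultaneously. By \S\ref{sec:6.3.a}, Theorem \ref{sec:6.3.thm.1} (Provability Theorem for PA) together with \S\ref{sec:7}, Corollary \ref{sec:6.3.cor.1}, the PA-unprovability of $[(\forall x)R(x)]$ rules out any such $AL_{R}$, which is exactly what leaves the present theorem strictly weaker than algorithmic \emph{computability} and fully compatible with the First Tautology Theorem (\S\ref{sec:significance.prov.thm.pvnp}, Theorem \ref{thm:first.tautology.thm}). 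So the work is bookkeeping, not calculation: isolate ``for every specified $n$ there is an $AL_{(R,\ n)}$'' (a metatheoretic statement, one algorithm per $n$) from ``there is a single $AL_{R}$ good for every $m$'', and invoke (i) and (ii) only at the former, weaker, strength.
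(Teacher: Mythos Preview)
Your proposal is correct and follows essentially the same route as the paper: both arguments use G\"{o}del's primitive recursive proof relation $xB\ulcorner[R(n)]\urcorner$ together with the numeral-wise PA-provability of each $[R(n)]$ to obtain, for every specified $n$, a terminating proof-search that evidences $R^{*}(n)$ as true. Your write-up is in fact more explicit than the paper's---you spell out the match with Definition~\ref{sec:2.def.1}, justify separately \emph{why} the returned verdict is correct (your point~(ii)), and isolate the ``one algorithm per $n$'' versus ``one algorithm for all $n$'' distinction---but these are refinements of presentation, not a different argument.
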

	
	\begin{proof}
		G\"{o}del has defined a primitive recursive relation, \(xB_{_{PA}}y\) that holds if, and only if, \(y\) is the G\"{o}del-number of a PA-formula with one variable, say [\(R\)], and \(x\) the G\"{o}del-number of a PA-proof of [\(R\)] (\cite{Go31}, p22, dfn.\ 45).
		
		\vspace{+1ex}
		Since every primitive recursive relation is Turing-computable (\textit{when treated as a Boolean function}), \(xB_{_{PA}}y\) defines a Turing-machine TM\(_{B}\) that halts on any \textit{specified} natural number values of \(x\) and \(y\).
		
		\vspace{+1ex}
		Now, if \(g_{[R(1)]}\), \(g_{[R(2)]}\), \ldots are the G\"{o}del-numbers of the PA-formulas [\(R(1)\)], [\(R(2)\)], \dots, it follows that, for any \textit{specified} natural number \(n\), when  the natural number value \(g_{[R(n)]}\) is input for \(y\), the Turing-machine TM\(_{B}\) must halt for some value of \(x\)---which is the G\"{o}del-number of some PA-proof of [\(R(n)\)]---since G\"{o}del has shown (\cite{Go31}, p25(1)) that [\(R(n)\)] is PA-provable for any \textit{specified} numeral [\(n\)].
		
		\vspace{+1ex}
		\noindent Hence \(R^{*}(n)\) is algorithmically \textit{verifiable} as true for any \textit{specified} natural number \(n\). \hfill \(\Box\)  
	\end{proof}

	\subsection{Interpreting the standard definition of the P$v$NP problem \textit{finitarily}}
	\label{sec:pvnp.problem}
	
	We note that the \textit{standard} definition of the classes P and NP is the one provided by Stephen Cook to the Clay Mathematical Institute in a 2000 paper, \cite{Cook}, which current computational complexity paradigms accept as the definitive description of the P$v$NP problem; where Cook admits a number-theoretic function $F$---viewed set-theoretically as extensionally defining (and being defined by) a unique subset $L$ of the set $\Sigma^{*}$ of finite strings over some non-empty finite alphabet set $\Sigma$---in P if, and only if, some deterministic Turing machine TM accepts $L$ and runs in polynomial time:
	
	\begin{quote}
		\footnotesize
		``The computability precursors of the classes $P$ and $NP$ are the classes of decidable and c.e. (computably enumerable) languages, respectively. We say that a language $L$ is c.e. i.e. (or semi-decidable) iff $L = L(M)$ for some Turing machine $M$. We say that $L$ is decidable iff $L = L(M)$ for some Turing machine $M$ which satisfies the condition that $M$ halts on all input strings $w$. \ldots
		
		\vspace{+.5ex}
		Thus the problem Satisfiability is: Given a propositional formula $F$, determine whether $F$ is satisfiable. To show that this is in $NP$ we define the polynomial-time checking relation $R(x, y)$, which holds iff $x$ codes a propositional formula $F$ and $y$ codes a truth assignment to the variables of $F$ which makes $F$ true." \\ \textit{\tiny{\ldots Cook: \cite{Cook}.}}
	\end{quote}
	
	However, we shall prefer\footnote{For reasons detailed in \cite{An21}, \S 4.B: \textit{An implicit ambiguity in the `official' definition of P}.} to interpret number-theoretic functions and relations over an infinite domain $\mathbb{D}$ \textit{finitarily}; as \textit{well-defining} deterministic algorithms (\S \ref{sec:intro.prov.thm.pvnp}, Definition \ref{def:det.alg}) that, for any \textit{specified} sequence of permissible values to the variables in the function/relation, \textit{evidence} how the function/relation is to be evaluated---and whether, or not, the result of such evaluation yields a value (or values)---in the domain $\mathbb{D}$.
	
	\vspace{+1ex}
	We shall not assume---as is the case in \textit{non-finitary} Cantorian set theories such as the first-order set Theory ZF, or the second-order Peano Arithmetic ACA$_{0}$\footnote{See \cite{An21}, \S 18.A: \textit{The subsystem ACA$_0$}.}---that the evaluations always determine a completed infinity (set) which can be referred to as a unique mathematical constant that identifies the function/relation in a mathematical language (or its interpretation) \textit{outside} of the set theory in which the function/relation is defined.
	
	\begin{quote}
		\footnotesize
		\textbf{Comment}: In other words we do not ignore Thoralf Skolem's cautionary remarks against inviting paradox by conflating entailments of formal systems under different interpretations (such as the two \textit{well-defined} interpretations of PA in \S \ref{sec:alg.ver.int.pa.n} and \S \ref{sec:alg.cmp.int.pa}), or over different domains\footnote{See, for instance, Goodstein's \textit{paradox} in \cite{An21}, \S 18,  \textbf{Theorem 18.1}: Goodstein's sequence $G_{o}(m_{o})$ over the finite ordinals in any putative model $\mathbb{M}$ of ACA$_{_{0}}$ \textit{terminates} with respect to the ordinal inequality `$>_{o}$' even if Goodstein's sequence $G(m)$ over the natural numbers \textit{does not terminate} with respect to the natural number inequality `$>$' in $\mathbb{M}$.}; where we note that, in a 1922 address delivered in Helsinki before the Fifth Congress of Scandinavian Mathematicians, Skolem improved upon both the argument and statement of L\"{o}wenheim's 1915 theorem (\cite{Lo15}, p.235, Theorem 2)---subsequently labelled as the (downwards) L\"{o}wenheim-Skolem Theorem (\cite{Sk22}, p.293):		
			
		\begin{itemize}
			\item[ ] \textbf{(Downwards) L\"{o}wenheim-Skolem Theorem} (\cite{Lo15}, p.245, Theorem 6; \cite{Sk22}, p.293): If a first-order proposition is satisfied in any domain at all, then it is already satisfied in a denumerably infinite domain.
		\end{itemize}
		
		\noindent Skolem then drew attention to a:
		
		\begin{enumerate}
			\footnotesize
			\item[ ] \textbf{Skolem's (apparent) paradox}: ``\ldots peculiar and apparently paradoxical state of affairs. By virtue of the axioms we can prove the existence of higher cardinalities, of higher number classes, and so forth. How can it be, then, that the entire domain $B$ can already be enumerated by means of the finite positive integers? The explanation is not difficult to find. In the axiomatization, ``set" does not mean an arbitrarily defined collection; the sets are nothing but objects that are connected with one another through certain relations expressed by the axioms. Hence there is no contradiction at all if a set $M$ of the domain $B$ is non-denumerable in the sense of the axiomatization; for this means merely that \textit{within} $B$ there occurs no one-to-one mapping $\Phi$ of $M$ onto $Z_{o}$ (Zermelo's number sequence). Nevertheless there exists the possibility of numbering all objects in $B$, and therefore also the elements of $M$, by means of the positive integers; of course such an enumeration too is a collection of certain pairs, but this collection is not a ``set" (that is, it does not occur in the domain $B$)." \\ \textit{\tiny{\ldots Skolem: \cite{Sk22}, p.295.}}
		\end{enumerate}	
	\end{quote}	

	We shall, instead, address the P$v$NP problem here from the \textit{logical} perspective of the paper \cite{Ra02} presented to ICM 2002 by Ran Raz, where he notes that:
	
	\begin{quote}
		\footnotesize
		``A Boolean formula $f(x_{1}, \ldots, x_{n})$ is a tautology if $f(x_{1}, \ldots, x_{n}) = 1$ for every $x_{1}, \ldots, x_{n}$. A Boolean formula $f(x_{1}, \ldots, x_{n})$ is unsatisfiable if $f(x_{1}, \ldots, x_{n}) = 0$ for every $x_{1}, \dots, x_{n}$. Obviously, $f$ is a tautology if and only if $\neg f$ is unsatisfiable.
		
		Given a formula $f(x_{1}, \ldots, x_{n})$, one can decide whether or not $f$ is a tautology by checking all the possibilities for assignments to $x_{1}, \ldots, x_{n}$. However, the time needed for this procedure is exponential in the number of variables, and hence may be exponential in the length of the formula $f$.
		
		\ldots P$\neq$NP is the central open problem in complexity theory and one of the most important open problems in mathematics today. The problem has thousands of equivalent formulations. One of these formulations is the following:
		
		Is there a polynomial time algorithm $\mathcal{A}$ that gets as input a Boolean formula $f$ and outputs 1 if and only if $f$ is a tautology?
		
		P$\neq$NP states that there is no such algorithm." \\ \textit{\tiny{\ldots Raz: \cite{Ra02}.}}
	\end{quote}
	
	\noindent Defining SAT as the class of Boolean tautologies, the corresponding SAT problem for PA is then:
	
	\begin{definition}[SAT$_{PA}$]
		\label{def:sat}
		The Boolean satisfiability problem (SAT$_{PA}$) for the first-order Peano Arithmetic PA is, given an arithmetical formula without quantifiers, whether, or not, a polynomial time algorithm can evidence it as a tautology.
	\end{definition}

	\subsubsection{SAT \(\notin\) P \textit{and} SAT \(\notin\) NP}
	\label{sec:lgcl.prf.p.neq.np}
	
	Clearly, the issue of whether, or not, there is a \textit{polynomial time} `algorithm $\mathcal{A}$ that gets as input a Boolean formula $f$ and outputs 1 if, and only if, $f$ is a tautology' is meaningful \textit{only} if we can \textit{evidence} that there \textit{is}, in fact, an `algorithm $\mathcal{A}$ that gets as input a Boolean formula $f$ and outputs 1 if, and only if, $f$ is a tautology'.
	
	\vspace{+1ex}
	So, if the G\"{o}delian relation \(R^{*}(x)\) defined in \S \ref{sec:significance.prov.thm.pvnp}, Theorem \ref{thm:first.tautology.thm} is algorithmically \textit{verifiable}, but not algorithmically \textit{computable}, as a tautology---hence not recognisable as a tautology by any Turing-machine, whether \textit{deterministic} or \textit{non-deterministic}---then it is trivially true \textit{logically} that:
	
	\begin{theorem}
		\label{thm:sat.prf.pvnp}
		SAT \(\notin\) P and SAT \(\notin\) NP.
	\end{theorem}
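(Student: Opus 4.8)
The plan is to obtain the theorem as an immediate logical consequence of the two Tautology Theorems of \S \ref{sec:significance.prov.thm.pvnp} together with the \textit{finitary} reading of the problem fixed by Definition~\ref{def:sat}. First I would make explicit what membership in the two classes demands: by Definition~\ref{def:sat}, to have SAT $\in$ P one needs a \textit{polynomial time} deterministic Turing-machine which, presented with a quantifier-free arithmetical formula, outputs $1$ exactly when that formula interprets over $\mathbb{N}$ as a tautology; and to have SAT $\in$ NP one needs the corresponding \textit{non-deterministic} polynomial time Turing-machine. In either case such a machine would, a fortiori, be a Turing-machine---deterministic in the first case, non-deterministic in the second---that \textit{evidences} G\"{o}del's arithmetical relation $R^{*}(x)$ as a tautology.

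The second step is to record that $R^{*}(x)$ is a genuine, non-vacuous instance of the problem: by Theorem~\ref{thm:second.tautology.thm} it is algorithmically \textit{verifiable} as a tautology---true for every \textit{specified} natural number $n$---so a correct SAT-solver would have to return the verdict `tautology' on input $[R(x)]$. The third step then invokes Theorem~\ref{thm:first.tautology.thm}: \textit{no} Turing-machine, whether deterministic or non-deterministic, can evidence $R^{*}(x)$ as a tautology, and hence, in particular, no \textit{polynomial time} one can. Combining the first and third steps yields the contradiction sought: there can be no polynomial time deterministic Turing-machine deciding SAT, and no polynomial time non-deterministic Turing-machine accepting SAT; that is, SAT $\notin$ P \textit{and} SAT $\notin$ NP.

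I do not expect the argument itself to present any real obstacle---once the arithmetical reinterpretation of \S \ref{sec:pvnp.problem} and the First Tautology Theorem are in hand, the conclusion is purely formal, as the sentence preceding the statement already signals by calling it ``trivially true \textit{logically}''. The one junction that wants care, and which I would spell out, is the passage from ``there is a polynomial time algorithm for SAT'' to ``there is a Turing-machine that evidences $R^{*}(x)$ as a tautology'': this is exactly Definition~\ref{def:sat} read together with the elementary observation that any polynomial time machine is in particular a machine, so that the resource bound is simply irrelevant to an \textit{impossibility} result. I would also flag explicitly---to pre-empt the objection that SAT is \textit{trivially} in NP under the classical set-theoretic framing---that the theorem concerns the \textit{finitary} arithmetical formulation of \S \ref{sec:pvnp.problem}, under which being a ``tautology'' means being true for every \textit{specified} natural number value of the variable rather than for every truth-assignment to finitely many Boolean variables; the whole force of the result lies in that reinterpretation.
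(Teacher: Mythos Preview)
Your proposal is correct and follows essentially the same route as the paper: invoke the two Tautology Theorems of \S\ref{sec:significance.prov.thm.pvnp} to conclude that no Turing machine---deterministic or non-deterministic---can evidence $R^{*}(x)$ as a tautology, and hence no polynomial-time one can. The paper's own proof adds one small intermediate step you leave implicit: it first draws the conclusion for SAT$_{PA}$ (which is what Definition~\ref{def:sat} actually defines) and then passes to the general SAT via the inclusion SAT$_{PA}\subset$ SAT; since your appeal to Definition~\ref{def:sat} is really an appeal to SAT$_{PA}$, you may want to make that final subset step explicit.
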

	
	\begin{proof}
		By \S \ref{sec:significance.prov.thm.pvnp}, Theorem \ref{thm:first.tautology.thm} and Theorem \ref{thm:second.tautology.thm}, \textit{no} Turing-machine, whether \textit{deterministic} or \textit{non-deterministic}, can evidence whether or not G\"{o}del's arithmetical relation $R^{*}(x)$ is a tautology. Hence SAT$_{PA}$ \(\notin\) P and SAT$_{PA}$ \(\notin\) NP. Since SAT$_{PA}$ $\subset$ SAT, the Theorem follows. \hfill $\Box$
	\end{proof}

	\section{Appendix: An implicit ambiguity in the `official' definition of P}
	\label{sec:ambgty.offcl.dfn.p}
	
	We note that, in a 2009 survey \cite{Frt09} of the status of the P$v$NP problem, Lance Fortnow despairs that `we have little reason to believe we will see a proof separating P from NP in the near future' since `[n]one of us truly understand the P versus NP problem':
	
	\begin{quote}
		\footnotesize
		``\ldots in the mid-1980's, many believed that the quickly developing area of circuit complexity would soon settle the P versus NP problem, whether every algorithmic problem with efficiently verifiable solutions have efficiently computable solutions. But circuit complexity and other approaches to the problem have stalled and we have little reason to believe we will see a proof separating P from NP in the near future.
		
		\ldots As we solve larger and more complex problems with greater computational power and cleverer algorithms, the problems we cannot tackle begin to stand out. The theory of NP-completeness helps us understand these limitations and the P versus NP problems begins to loom large not just as an interesting theoretical question in computer science, but as a basic principle that permeates all the sciences.
		
		\ldots None of us truly understand the P versus NP problem, we have only begun to peel the layers around this increasingly complex question." \\ \textit{\tiny{\ldots Fortnow: \cite{Frt09}.}}
	\end{quote}
	
	In this investigation we shall argue that Fortnow's pessimism reflects the circumstance that standard, set-theoretical, interpretations---such as the following\footnote{See also \cite{Mor12}.}---of the formal definitions of the classes P and NP in \cite{Cook} can be seen to admit an implicit ambiguity:
	
	\begin{quote}
		\footnotesize
		``The computability precursors of the classes $P$ and $NP$ are the classes of decidable and c.e. (computably enumerable) languages, respectively. We say that a language $L$ is c.e. i.e. (or semi-decidable) iff $L = L(M)$ for some Turing machine $M$. We say that $L$ is decidable iff $L = L(M)$ for some Turing machine $M$ which satisfies the condition that $M$ halts on all input strings $w$." \\ \textit{\tiny{\ldots Cook: \cite{Cook}.}}
		
		\begin{quote}
			\footnotesize
			\textbf{Comment}: For instance, it is not clear from the above whether (a) $S \in P$ iff $S$ is decidable and $S \in NP$ iff $S$ is c.e, in which case the separation between the two classes would be qualitative; or whether (b) both $P$ and $NP$ are decidable classes, in which case (following contemporary wisdom) the separation between the two classes can be assumed to be only quantitative.
		\end{quote}
	\end{quote}
	
	Specifically, from the \textit{evidenced} based perspective of this investigation concerning the relative strengths and limitations of first order set theories and first order arithmetics\footnote{As reflected in the \textit{Complementarity Thesis} in \cite{An21} (\S 1, Thesis 1), and argued in \cite{An21}, \S 13.E: \textit{Recognising the strengths and limitations of ZF and PA}.}, set-theoretical interpretations of the P$v$NP problem are \textit{essentially} unable to recognise that the assignment of satisfaction and truth values to number-theoretic formulas, under a \textit{well-defined} interpretation, can be defined in two, distinctly different, \textit{evidence-based} ways\footnote{The distinction is explicitly introduced, and its significance in establishing a finitary proof of consistency for the first order Peano Arithmetic PA highlighted, by Theorem 6.8, p.41, in \cite{An16} (see also \S \ref{sec:alg.cmp.int.pa}, Theorem \ref{sec:6.thm.2}).}:
	
	\vspace{+1ex}
	(a) in terms of algorithmic \textit{verifiability} (see \S \ref{sec:revisiting}, Definition \ref{sec:2.def.1});
	
	\begin{quote}
		\footnotesize
		\textbf{Comment}: It immediately follows from this definition that a number-theoretical formula $F$ is algorithmically \textit{verifiable} under an interpretation (and should therefore be defined in NP) if, and only if, we can define a checking relation $R(x, y)$\footnote{If $F$ is a formula of the first order Peano Arithmetic PA, the existence of such a checking relation is assured by Theorem 5.1, p.38, in \cite{An16} (see also \S \ref{sec:alg.ver.int.pa.n}, Theorem \ref{sec:5.5.lem.1}).}---where $x$ codes a propositional formula $F$ and $y$ codes a truth assignment to the variables of $F$---such that, for any given natural number values $(m, n)$, there is a deterministic algorithm which will finitarily decide whether or not $R(m, n)$ holds over the domain $\mathbb{N}$ of the natural numbers.
	\end{quote}
	
	(b) in terms of algorithmic \textit{computability} (see \S \ref{sec:revisiting}, Definition \ref{sec:2.def.2}).
	
	\begin{quote}
		\footnotesize
		\textbf{Comment}: It immediately follows from this definition that a number-theoretical formula $F$ is algorithmically \textit{computable} under an interpretation (and should therefore be defined in P) if, and only if, we can define a checking relation $R(x, y)$\footnote{If $F$ is a PA formula, the existence of such a checking relation is assured by Theorem 6.1, p.40, in \cite{An16} (see also \S \ref{sec:alg.cmp.int.pa}, Theorem \ref{sec:5.5.lem.2}).}---where $x$ codes a propositional formula $F$ and $y$ codes a truth assignment to the variables of $F$---such that there is a deterministic algorithm which, for any given natural number values $(m, n)$, will finitarily decide whether or not $R(m, n)$ holds over the domain $\mathbb{N}$ of the natural numbers.
	\end{quote}
	
	Consequently, standard, set-theoretical, interpretations of the formal definitions of the classes P and NP which do not admit the relative strengths and limitations of first order set theories and first order arithmetics, would be prone to implicitly assuming that every propositional formula which is algorithmically \textit{verifiable} is necessarily algorithmically \textit{computable}.
	
	\vspace{+1ex}
	It would then follow that the differentiation between the classes P and NP is only quantitative, and can therefore be adequately expressed in terms of computational complexity; i.e., whether or not the class P can be defined as consisting of all, and only, those problems that can be solved in \textit{polynomial time} by a \textit{deterministic} Turing machine, and the class NP as consisting of all, and only, those problems that can be solved in \textit{polynomial time} by a \textit{non-deterministic} Turing machine.
	
	\vspace{+1ex}
	However, we shall argue that---since the two concepts \S \ref{sec:ambgty.offcl.dfn.p}(a) and \S \ref{sec:ambgty.offcl.dfn.p}(b) are \textit{well-defined}, and there are classically defined arithmetic formulas---such as G\"{o}del's `undecidable' formula $[R(x)]$\footnote{Which G\"{o}del defines and refers to only by its G\"{o}del number $r$ in \cite{Go31}, p.25, eqn.12.}---which are algorithmically \textit{verifiable} but not algorithmically \textit{computable} (see \cite{An16}, Corollary 8.3, p.42; also \S \ref{sec:7}, Corollary \ref{sec:6.3.cor.1.1}), the differentiation between the classes P and NP is also qualitative, and cannot be adequately expressed in terms of only computational complexity.

	\subsection{The P$v$NP Separation Problem}
	\label{sec:pnp.sprtn.prblm}
	
	In his 2009 survey \cite{Frt09}, Fortnow describes the P$v$NP problem informally as follows:
	
	\begin{quote}
		\footnotesize
		``In 1965, Jack Edmonds \ldots suggested a formal definition of ``efficient computation" (runs in time a fixed polynomial of the input size). The class of problems with efficient solutions would later become known as P for ``Polynomial Time".
		
		\ldots But many related problems do not seem to have such an efficient algorithm.
		
		\ldots The collection of problems that have efficiently verifiable solutions is known as NP (for ``Nondeterministic Polynomial-Time" \dots).
		
		So P=NP means that for every problem that has an efficiently verifiable solution, we can find that solution efficiently as well.
		
		\ldots If a formula $\phi$ is not a tautology, we can give an easy proof of that fact by exhibiting an assignment of the variables that makes $\phi$ false. But if \ldots there are no short proofs of tautology that would imply P$\neq$NP." \\ \textit{\tiny{\ldots Fortnow: \cite{Frt09}.}}
	\end{quote}
	
	This suggests the following, implicitly set-theoretical, formulation of the P$v$NP Separation Problem:
	
	\begin{query}[Efficient P$v$NP Separation]
		\label{qry:effcnt.pnp.sprtn.prblm}
		Is there an arithmetical formula $F$ that is \textit{efficiently} verifiable and not \textit{efficiently} computable? 
	\end{query}
	
	From the \textit{evidence-based} perspective of this investigation we shall, however, consider a more precise formulation in arithmetic.
	
	\vspace{+1ex}
	In other words, we shall avoid the ambiguity---in the meaning of Edmonds' concept of `efficient'---which is admitted by asymmetrically defining `efficient computation' as computation by a \textit{deterministic} Turing machine in \textit{polynomial} time, and `efficient verification' as computation by a \textit{non-deterministic} Turing machine in \textit{polynomial time}.
	
	\vspace{+1ex}
	We shall, instead, define Edmonds' `efficient \textit{computation}' as `algorithmic \textit{computation}', and `efficiently \textit{verifiable}' as `algorithmically \textit{verifiable}'; whence an affirmative answer to Query \ref{qry:effcnt.pnp.sprtn.prblm} would entail, and be entailed by, an affirmative answer to:
	
	\begin{query}[Algorithmic P$v$NP Separation]
		\label{qry:algrthmc.pnp.sprtn.prblm}
		Is there an arithmetical formula $F$ that is \textit{algorithmically} verifiable but not \textit{algorithmically} computable? 
	\end{query}
	
	We shall now show that Query \ref{qry:algrthmc.pnp.sprtn.prblm} not only removes the ambiguity in the standard, set-theoretical, asymmetrical definitions of the classes P and NP, but it also admits of an affirmative answer.
	
	\vspace{+1ex}
	We shall first show how G\"{o}del's $\beta$-function\footnote{See \cite{Go31}, Theorem VII, Lemma 1; also \cite{An21}, \S 15.A: \textit{G\"{o}del's $\beta$-function}.} uniquely corresponds each classically defined real number to an algorithmically \textit{verifiable} arithmetical formula.
	
	\vspace{+1ex}
	Since classical theory admits the existence of real numbers that are not algorithmically \textit{computable}\footnote{As detailed in \cite{Tu36}.}, we shall conclude that classical theory must also admit the existence of arithmetical formulas that are algorithmically \textit{verifiable} but not algorithmically \textit{computable}.
	
	\vspace{+1ex}
	We note, first, that every \textit{atomic} number-theoretical formula is algorithmically \textit{verifiable}\footnote{An immediate consequence of \cite{Tu36}.}; further, by Tarski's definitions\footnote{On the inductive assignment of satisfaction and truth values to the formulas of a formal language under an interpretation, as detailed in \cite{Ta35}.}, the algorithmic \textit{verifiability} of the \textit{compound} formulas of a formal language (which contain additional logical constants) can be inductively defined---under a \textit{well-defined} interpretation---in terms of the algorithmic \textit{verifiability} of the interpretations of the atomic formulas of the language\footnote{See, for instance, \cite{An16}, \S 3: \textit{Reviewing Tarski's inductive assignment of truth-values under an interpretation}; also \S \ref{sec:rvwng.trski.ind.def}}.
	
	\vspace{+1ex}
	In particular, by \S \ref{sec:alg.ver.int.pa.n}, Corollary \ref{sec:5.4.lem.5} and Theorem \ref{sec:5.4.thm.2}\footnote{See also \cite{An16}, \S 5, Theorems 5.6 and 5.7.}, the formulas of the first order Peano Arithmetic PA are decidable under the \textit{weak}, algorithmically \textit{verifiable}, interpretation $\mathcal{I}_{PA(\mathbb{N},\ SV)}$ of PA over the domain $\mathbb{N}$ of the natural numbers if, and only if, they are algorithmically \textit{verifiable} under the interpretation.
	
	\vspace{+1ex}
	Similarly, every \textit{atomic} number-theoretical formula is algorithmically \textit{computable}\footnote{An immediate consequence of \cite{Tu36}.}; further, by Tarski's definitions, the algorithmic \textit{computability} of the \textit{compound} formulas of a formal language (which contain additional logical constants) can be inductively defined---under a \textit{well-defined} interpretation---in terms of the algorithmic \textit{computability} of the interpretations of the atomic formulas of the language\footnote{See, for instance, \cite{An16}, \S 3; also \S \ref{sec:rvwng.trski.ind.def}}.
	
	\vspace{+1ex}
	In this case, however, the PA-formulas are \textit{always} decidable under the \textit{strong}, finitary, algorithmically \textit{computable} interpretation $\mathcal{I}_{PA(\mathbb{N},\ SC)}$ of PA over $\mathbb{N}$, since PA is categorical with respect to algorithmic \textit{computability}\footnote{By \cite{An16}, Corollary 7.2: \textit{PA is categorical with respect to algorithmic \textit{computability}.}; see also \S \ref{sec:6.3.a}, Corollary \ref{cor:pa.categorical}.}.
	
	\vspace{+1ex}
	We also note that, by \cite{An16}, Theorem 2.1\footnote{See also \cite{An21}, \S 7.G, Theorem 7.2: \textit{There are \textit{well-defined} number theoretic functions that are algorithmically \textit{verifiable} but not algorithmically \textit{computable}}.}, there are algorithmically \textit{verifiable} number theoretical formulas which are not algorithmically \textit{computable}.
	
	\vspace{+1ex}
	We note that algorithmic \textit{computability} implies the existence of a deterministic algorithm that can \textit{finitarily} decide the truth/falsity of each proposition in a well-defined denumerable sequence of propositions\footnote{Which is why (see \S \ref{sec:pa.axms.true.alg.cmp}, \textit{Poincar\'{e}-Hilbert debate} (ii)) the PA Axiom Schema of Finite Induction \textit{can} be \textit{finitarily} verified as true (see \S \ref{sec:pa.axms.true.alg.cmp}, Lemma \ref{sec:6.lem.2}) under the \textit{strong}, finitary, algorithmically \textit{computable} interpretation $\mathcal{I}_{PA(\mathbb{N},\ SC)}$ of PA, over $\mathbb{N}$, with respect to `truth' as defined by the algorithmically \textit{computable} formulas of PA.}, whereas algorithmic \textit{verifiability} does not imply the existence of a deterministic algorithm that can \textit{finitarily} decide the truth/falsity of each proposition in a well-defined denumerable sequence of propositions\footnote{Which is why, in this case (see \S \ref{sec:pa.axms.true.alg.cmp}, \textit{Poincar\'{e}-Hilbert debate} (i)), the PA Axiom Schema of Finite Induction \textit{cannot} be \textit{finitarily} verified as true---but only algorithmically \textit{verified} as true (see \S \ref{sec:pa.axms.true.alg.ver}, Lemma \ref{sec:5.4.lem.2})---under the \textit{weak}, standard (see \S \ref{sec:alg.ver.int.pa.n}), algorithmically \textit{verifiable} interpretation $\mathcal{I}_{PA(\mathbb{N},\ SV)}$ of PA, over $\mathbb{N}$, with respect to `truth' as defined by the algorithmically \textit{verifiable} formulas of PA.}.
	
	\index{algorithmic!computability}%
	\index{computability!algorithmic}%
	
	\vspace{+1ex}
	From the point of view of a \textit{finitary} mathematical philosophy, the significant difference between the two concepts could be expressed by saying that we may treat the decimal representation of a real number as corresponding to a physically measurable limit---and not only to a mathematically definable limit---if and only if such representation is definable by an algorithmically \textit{computable} function.\footnote{The significance of this for the natural sciences is highlighted in \cite{An21}, \S 19.C: \textit{Mythical `set-theoretical' limits of fractal constructions}.}

	\subsubsection{An arithmetical perspective on the P$v$NP Separation Problem}
	\label{sec:arthmtcl.perspctv.pvnp.sprtn.prblm}
	
	We finally argue that G\"{o}del's $\beta$-function (see \S \ref{gdl.beta.fn}: \textit{G\"{o}del's $\beta$-function}) entails:
	
	\begin{theorem}
		\label{thm:pvnp.alg.ver.nt.alg.cmp}
		There is an arithmetical formula that is algorithmically verifiable, but not algorithmically computable, under any \textit{evidence-based} interpretation of PA.
	\end{theorem}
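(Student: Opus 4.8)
The plan is to use G\"{o}del's $\beta$-function to attach to a non-computable real number a number-theoretic relation that inherits algorithmic \textit{verifiability} from the finite-sequence property of $\beta$, but that cannot be algorithmically \textit{computable} without rendering that real number computable.

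First I would recall G\"{o}del's $\beta$-function $\beta(x_{1}, x_{2}, x_{3})$ and its two relevant features: it is primitive recursive---hence arithmetically definable, with the PA-formula $[\beta(a, b, c) = d]$ being PA-provable or PA-refutable for any specified numerals $[a], [b], [c], [d]$---and, for every finite sequence $b_{0}, b_{1}, \ldots, b_{k}$ of natural numbers, there are natural numbers $a, b$ with $\beta(a, b, i) = b_{i}$ for all $i \le k$. Next I would invoke Turing's 1936 cardinality argument to fix a real number $\rho \in [0,1]$ whose binary expansion $\rho = 0.d_{0}d_{1}d_{2}\cdots$, with each $d_{i} \in \{0,1\}$, is \textit{not} algorithmically computable, and I would let $R^{*}_{\rho}(x)$ denote the number-theoretic relation ``$d_{x} = 1$''.

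For the \textit{verifiability} half: given any \textit{specified} natural number $n$, the finite-sequence property supplies numerals $[a_{n}], [b_{n}]$ such that $[\beta(a_{n}, b_{n}, i) = d_{i}]$ is PA-provable for every $i \le n$; hence the deterministic algorithm that, on input $i \le n$, evaluates $\beta(a_{n}, b_{n}, i)$ and compares it with $1$ provides objective evidence deciding each of $R^{*}_{\rho}(1), \ldots, R^{*}_{\rho}(n)$, so by Definition \ref{sec:2.def.1} the relation $R^{*}_{\rho}(x)$ is algorithmically \textit{verifiable}. Since $\beta$ has the same interpretation over $\mathbb{N}$ under every \textit{well-defined}---in particular every \textit{evidence-based}---interpretation of PA, this verifying family of algorithms is interpretation-independent. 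For the \textit{non-computability} half: if $R^{*}_{\rho}(x)$ were algorithmically \textit{computable}, then by Definition \ref{sec:2.def.2} a single deterministic algorithm would decide $R^{*}_{\rho}(i)$ for every $i$ in $\{R^{*}_{\rho}(1), R^{*}_{\rho}(2), \ldots\}$, and that algorithm would compute every binary digit of $\rho$, contradicting the choice of $\rho$. Combined with Corollary \ref{sec:6.3.cor.1.1}---which already exhibits G\"{o}del's $[R(x)]$ as one concrete witness---this yields the theorem.

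The step I expect to be the main obstacle is pinning down the exact sense in which $R^{*}_{\rho}(x)$ counts as an ``arithmetical formula'': the $\beta$-function supplies a \textit{different} parameter pair $[a_{n}], [b_{n}]$ for each cut-off $n$, so there is no single fixed PA-formula (even one carrying parameters) that defines the whole infinite digit relation of $\rho$---and there cannot be, since by categoricity with respect to algorithmic \textit{computability} (Corollary \ref{cor:pa.categorical}) every PA-definable relation would then be algorithmically \textit{computable} under $\mathcal{I}_{PA(\mathbb{N},\ SC)}$. The intended resolution is to read ``arithmetical formula'' as the $\beta$-function schema $\{[\beta(a_{n}, b_{n}, x) = 1]\}_{n \in \mathbb{N}}$, so that each verifying step is genuinely arithmetical while no \textit{uniform} algorithm exists; the delicate point is to phrase Definitions \ref{sec:2.def.1} and \ref{sec:2.def.2} so that precisely this verifiable-versus-computable gap does the work, uniformly across all \textit{evidence-based} interpretations of PA.
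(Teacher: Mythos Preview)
Your proposal is correct and follows essentially the same route as the paper: fix a non-computable real, use the finite-sequence property of G\"{o}del's $\beta$-function to witness algorithmic \textit{verifiability} of each initial segment of its digit relation, and argue that algorithmic \textit{computability} would force the real to be computable. The obstacle you flag---that the ``arithmetical formula'' is really a $\beta$-parametrised schema rather than a single fixed PA-formula---is exactly the point the paper glosses over as well (it writes the witness as $[(\exists y)(R(x) = y)]$ without fully explaining how $[R(x)]$ is a single formula), so your candid treatment of it is, if anything, more careful than the original.
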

	
	\begin{proof}  Let $\{r(n)\}$ be the denumerable sequence defined by the denumerable sequence of digits in the decimal expansion $\sum_{i=1}^{\infty}r(i).10^{-i}$ of a putatively \textit{well-defined} real number $\mathbb{R}$ in the interval $0 < \mathbb{R} \leq 1$.
		
		\vspace{+1ex}
		\noindent By \S \ref{gdl.beta.fn}, Lemma \ref{sec:ch.2.2.2.lem.1}, for any \textit{specified} natural number $k$, there are natural numbers $b_{k}, c_{k}$ such that, for any $1 \leq n \leq k$:
		
		\vspace{+1ex}
		$\beta(b_{k}, c_{k}, n) = r(n)$.
		
		\vspace{+1ex}
		\noindent By \S \ref{gdl.beta.fn}, Lemma \ref{sec:ch.2.2.1.lem.1}, $\beta(x_{1}, x_{2}, x_{3})$ is strongly represented in PA by $[Bt(x_{1}, x_{2}, x_{3}, x_{4})]$ such that, for any $1 \leq n \leq k$:
		
		\vspace{+1ex}
		If $\beta(b_{k}, c_{k}, n) = r(n)$ then PA proves $[Bt(b_{k}, c_{k}, n, r(n))]$.
		
		\vspace{+1ex}
		\noindent We now define the arithmetical formula $[R(b_{k}, c_{k}, n)]$ for any $1 \leq n \leq k$ by:
		
		\vspace{+1ex}
		$[R(b_{k}, c_{k}, n) = r(n)]$ if, and only if, PA proves $[Bt(b_{k}, c_{k}, n, r(n))]$.
		
		\vspace{+1ex}
		\noindent Hence every putatively \textit{well-defined} real number $\mathbb{R}$ in the interval $0 < \mathbb{R} \leq 1$ uniquely corresponds to an algorithmically \textit{verifiable} arithmetical formula $[R(x)]$ since:
		
		\begin{quote}
			\normalsize
			For any $k$, the primitive recursivity of $\beta(b_{k}, c_{k}, n)$ yields a deterministic algorithm AL$_{(\beta, \mathbb{R}, k)}$ that can provide evidence\footnote{In the sense of \cite{Mu91} and \cite{Lob59}; see \S \ref{sec:revisiting}.} for deciding the unique value of each formula in the finite sequence $\{[R(1), R(2), \ldots, R(k)]\}$ by evidencing the truth under an \textit{evidence-based} interpretation of PA for:
			
			\begin{quote}
				\normalsize
				$[R(1) = R(b_{k}, c_{k}, 1)]$ \\
				$[R(b_{k}, c_{k}, 1) = r(1)]$
				
				\vspace{+1ex}
				$[R(2) = R(b_{k}, c_{k}, 2)]$ \\
				$[R(b_{k}, c_{k}, 2) = r(2)]$
				
				\vspace{+1ex}
				\ldots
				
				\vspace{+1ex}
				$[R(k) = R(b_{k}, c_{k}, k)]$ \\
				$[R(b_{k}, c_{k}, k) = r(k)]$.
			\end{quote}
		\end{quote}
		
		\noindent The correspondence is unique because, if $\mathbb{R}$ and $\mathbb{S}$ are two different putatively \textit{well-defined} reals in the interval $0 < \mathbb{R},\ \mathbb{S} \leq 1$, then there is always some $m$ for which:
		
		\vspace{+1ex}
		$r(m) \neq s(m)$.
		
		\vspace{+1ex}
		\noindent Hence the corresponding arithmetical formulas $[R(n)]$ and $[S(n)]$ are such that:
		
		\vspace{+1ex}
		$[R(n) = r(n)]$ for all $1 \leq n \leq m$.
		
		\vspace{+.5ex}
		$[S(n) = s(n)]$ for all $1 \leq n \leq m$.
		
		\vspace{+.5ex}
		$[R(m) \neq S(m)]$.
		
		\vspace{+1ex}
		\noindent By \cite{An16}, \S 2, Theorem 2.1\footnote{See also \cite{An21}, \S 7.G, Theorem 7.2: \textit{There are \textit{well-defined} number theoretic functions that are algorithmically \textit{verifiable} but not algorithmically \textit{computable}}.}, there is an algorithmically \textit{uncomputable} real number $\mathbb{R}$ such that the corresponding PA formula $[(\exists y)(R(x) = y)]$ is also algorithmically \textit{uncomputable}, but algorithmically \textit{verifiable}, under any \textit{evidence-based} interpretation of PA over $\mathbb{N}$.
		
		\vspace{+1ex}
		\noindent The theorem follows. \hfill $\Box$
	\end{proof}
	
	We conclude that if we were to unambiguously separate the classes P and NP as in \S \ref{sec:pnp.sprtn.prblm}, then it would follow that:
	
	\begin{corollary}[P$\neq$NP by separation]
		\label{cor:pvnp.sprtn.prf}
		If P is the class of problems that admit algorithmically \textit{computable} solutions, and NP is the class of problems that admit algorithmically \textit{verifiable} solutions, then P $\neq$ NP. \hfill $\Box$
		
		\index{P$v$NP}%
	\end{corollary}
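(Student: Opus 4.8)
The plan is to establish the inclusion P $\subseteq$ NP and then exhibit one explicit problem lying in NP but not in P, so that the strict inequality follows at once. For the inclusion I would argue directly from the \textit{evidence-based} identifications in the hypothesis: a problem lies in P precisely when it admits an algorithmically \textit{computable} solution, i.e.\ (unwinding \S \ref{sec:revisiting}, Definition \ref{sec:2.def.2} for the associated number-theoretic relation $F(x)$) when there is a \emph{single} deterministic algorithm $AL_{F}$ that decides the truth/falsity of every proposition in the denumerable sequence $\{F(1), F(2), \ldots\}$; a problem lies in NP precisely when it admits an algorithmically \textit{verifiable} solution, i.e.\ (by \S \ref{sec:revisiting}, Definition \ref{sec:2.def.1}) when, for each \textit{specified} $n$, there is a deterministic algorithm $AL_{(F,\ n)}$ that decides each proposition in the finite segment $\{F(1), \ldots, F(n)\}$. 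Given $AL_{F}$, taking $AL_{(F,\ n)}$ to be $AL_{F}$ itself suffices---since $AL_{F}$ halts on every input, it \textit{a fortiori} halts on each of the first $n$ inputs---so every algorithmically \textit{computable} relation is algorithmically \textit{verifiable}, and hence P $\subseteq$ NP.

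For the strict separation I would invoke \S \ref{sec:arthmtcl.perspctv.pvnp.sprtn.prblm}, Theorem \ref{thm:pvnp.alg.ver.nt.alg.cmp}: there is an arithmetical formula---concretely G\"{o}del's $[R(x)]$ of \cite{Go31} (cf.\ \cite{Go31}, p.25, eqn.12), as made explicit in \S \ref{sec:7}, Corollary \ref{sec:6.3.cor.1.1}---whose interpretation $R^{*}(x)$ over $\mathbb{N}$ is algorithmically \textit{verifiable} but not algorithmically \textit{computable} under any \textit{evidence-based} interpretation of PA. By \S \ref{sec:significance.prov.thm.pvnp}, Theorem \ref{thm:second.tautology.thm}, the problem of evidencing $R^{*}(x)$---treated as a Boolean function---as a tautology admits an algorithmically \textit{verifiable} solution, hence lies in NP; by \S \ref{sec:significance.prov.thm.pvnp}, Theorem \ref{thm:first.tautology.thm}, no Turing machine, deterministic or non-deterministic, can evidence $R^{*}(x)$ as a tautology, so this problem admits no algorithmically \textit{computable} solution and does not lie in P. One and the same problem thus belongs to NP but not to P, so the two classes cannot coincide and P $\neq$ NP.

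The only point I expect to need genuine care is the bookkeeping that legitimises placing ``evidence $R^{*}(x)$ as a tautology'' in NP under the redefined class: one must check that G\"{o}del's primitive recursive checking relation $xB_{_{PA}}y$ of Theorem \ref{thm:second.tautology.thm} does in fact supply, for each \textit{specified} $n$, the deterministic algorithm required to decide each member of $\{R^{*}(1), \ldots, R^{*}(n)\}$, and that neither this step nor the inclusion P $\subseteq$ NP covertly re-introduces Aristotle's particularisation or any other non-finitary principle. Since $xB_{_{PA}}y$ is primitive recursive---hence Turing-computable when treated as a Boolean function---and $[R(n)]$ is PA-provable for every \textit{specified} numeral $[n]$, that verification goes through, and the separation is then immediate.
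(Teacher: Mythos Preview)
Your proposal is correct and takes essentially the same approach as the paper: the corollary is presented there as an immediate consequence of Theorem \ref{thm:pvnp.alg.ver.nt.alg.cmp} (marked with a bare $\Box$), and your argument likewise rests on that theorem, supplemented by the explicit G\"{o}delian witness from Corollary \ref{sec:6.3.cor.1.1} and Theorems \ref{thm:first.tautology.thm}--\ref{thm:second.tautology.thm}. Your added inclusion P $\subseteq$ NP from Definitions \ref{sec:2.def.1}--\ref{sec:2.def.2} is a natural elaboration the paper leaves implicit.
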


	\subsubsection{Why the class NP is not \textit{well-defined}}
	\label{sec:np.nt.wl.dfnd}
	
	We can now see why the classical definition of NP cannot claim to be \textit{well-defined}:
	
	\begin{theorem}[NP is algorithmically \textit{verifiable}]
		\label{thm:np.alg.ver}
		If a number-theoretical formula $[F(n)]$ is in NP, then it is algorithmically \textit{verifiable}.
	\end{theorem}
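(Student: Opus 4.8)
The plan is to unpack NP-membership through Cook's polynomial-time checking relation and then show that, for each \emph{specified} argument, the nondeterministic ``guess'' collapses to an exhaustive search over finitely many bounded certificates---precisely the kind of deterministic decision procedure that Definition~\ref{sec:2.def.1} demands. First I would recall that, by the official definition of NP in \cite{Cook} (and as made explicit in the Comment to \S \ref{sec:ambgty.offcl.dfn.p}(a)), saying that the number-theoretical formula $[F(n)]$ is in NP amounts to the existence of a checking relation $R(x, y)$ and a polynomial $p$ such that: (i) $R(m, w)$ is decidable in time polynomial in the lengths of its arguments---hence $R$ is, in particular, algorithmically \emph{computable}, and its decision procedure halts on every input; and (ii) for every \emph{specified} natural number $m$, the proposition $F(m)$ holds if, and only if, there is some certificate $w$ of length at most $p(m)$ with $R(m, w)$ true.

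Next I would note that, for any single \emph{specified} natural number $m$, the collection of candidate certificates $w$ of length at most $p(m)$ is \emph{finite}. Since $R$ is algorithmically \emph{computable}, a deterministic algorithm can enumerate this finite collection and evaluate $R(m, w)$ on each member; the search halts after finitely many steps and returns ``true'', exhibiting the first witnessing $w$ (objective \emph{evidence}, in the sense of Murthy \cite{Mu91} and L\"{o}b \cite{Lob59}, that $F(m)$ holds), if such a $w$ is found, and otherwise returns ``false'' once all candidates are exhausted (evidence that $F(m)$ does not hold). Thus the truth or falsity of each individual proposition $F(m)$ is settled by a deterministic---if exponentially costly---procedure.

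Then, for any \emph{specified} natural number $n$, I would assemble the deterministic algorithm $AL_{(F,\ n)}$ that runs this bounded certificate search successively for $m = 1, 2, \ldots, n$, thereby deciding the truth or falsity of every proposition in the finite sequence $\{F(1), F(2), \ldots, F(n)\}$. By Definition~\ref{sec:2.def.1} this is exactly what it means for $F(x)$ to be algorithmically \emph{verifiable}, and the theorem follows.

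I expect the main obstacle to be conceptual rather than computational: one must argue explicitly that the ``nondeterministic guess'' implicit in NP-membership is, \emph{for each specified argument}, nothing more than a choice among finitely many strings of bounded length, so that a deterministic exhaustive search legitimately supplies the ``objective evidence'' that algorithmic verifiability requires; and one must exploit that Definition~\ref{sec:2.def.1} permits $AL_{(F,\ n)}$ to depend on $n$ (no single uniform algorithm is demanded), which is what prevents the unbounded growth of the certificate bounds $p(n)$ across all $n$ from causing any trouble. The complementary point---that such a verifier need \emph{not} lift to a single algorithmically \emph{computable} decision procedure for the whole denumerable sequence $\{F(1), F(2), \ldots\}$ (Definition~\ref{sec:2.def.2})---is exactly the gap that G\"{o}del's $[R(x)]$ realises via \cite{An16}, Theorem~2.1, together with \S \ref{sec:7}, Corollary~\ref{sec:6.3.cor.1.1}, and is what the paper will use to argue that NP is not \emph{well-defined}.
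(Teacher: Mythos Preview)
Your argument is correct, but it follows a different route from the paper's own proof. You work from Cook's verifier formulation of NP: unpack the polynomial-time checking relation $R(x,y)$ with its polynomial certificate bound, observe that for each \emph{specified} $m$ the candidate certificates form a finite set, and run a deterministic exhaustive search to decide $F(m)$; chaining these for $m=1,\ldots,n$ yields the algorithm $AL_{(F,\,n)}$ demanded by Definition~\ref{sec:2.def.1}. The paper instead starts from the nondeterministic-Turing-machine formulation: if $[F(n)]\in\mathrm{NP}$ then an NDTM computes, for any specified $k$, the finite sequence $F(1),\ldots,F(k)$; it then invokes G\"odel's $\beta$-function (Lemma~\ref{sec:ch.2.2.2.lem.1}) to package those values into a single primitive recursive function $G_k$ with $G_k(i)=F(i)$ for $1\le i\le k$, and appeals to the algorithmic \emph{computability} of $G_k$ to conclude algorithmic \emph{verifiability} of $F$. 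Your approach is more elementary---it needs no $\beta$-function machinery and is explicit about how the nondeterministic ``guess'' collapses to a finite search---whereas the paper's approach deliberately routes through the $\beta$-function apparatus already developed in \S\ref{gdl.beta.fn}, keeping the argument parallel to Theorem~\ref{thm:pvnp.alg.ver.nt.alg.cmp}.
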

	
	\begin{proof}
		By the accepted-as-standard definition of NP\footnote{In \cite{Cook}.}, if $[F(n)]$ is in NP, then it is classically computable by a non-deterministic Turing machine, say NDTM, in polynomial time. Hence, for any \textit{specified} natural number $k$, NDTM computes the sequence $\{[F(1), F(2), \ldots, F(k)]\}$ in polynomial time. By G\"{o}del's $\beta$-function (see \S \ref{gdl.beta.fn}), we can define a primitive recursive function $[G_{k}(n)]$ such that $[G_{k}(i) = F(i)]$ for all $1 \leq i \leq k$. By \S \ref{sec:revisiting}, Definition \ref{sec:2.def.2}, $[G_{k}(n)]$ is algorithmically \textit{computable}. The theorem follows. \hfill $\Box$	
	\end{proof}
	
	Thus, for NP to be a \textit{well-defined} class we would---in view of \S \ref{sec:lgcl.prf.p.neq.np}, Theorem \ref{thm:sat.prf.pvnp} (SAT is not in P or NP)---need to prove, conversely, that if $[F(n)]$ is algorithmically \textit{verifiable}, then it must be classically computable by a non-deterministic Turing machine \textit{in polynomial time}.
	
	\vspace{+1ex}
	Prima facie, such a proof is neither obvious, nor intuitively plausible from the \textit{evidence-based} perspective of this investigation, as there seems to be no conceivable reason why---even in principle since \textit{evidence-based} reasoning treats a formula that is not algorithmically \textit{verifiable} as \textit{ill-defined}\footnote{See \cite{An21}, \S 7.F: \textit{Well-definedness}.}---\textit{every} \textit{well-defined} number-theoretic formula \textit{must}, necessarily, be classically computable by a non-deterministic Turing machine \textit{in polynomial time}.

	\vspace{+1ex}
	However, such a putative proof seems precisely what is implicitly appealed to in the 2019 claim \cite{AAB19}\footnote{Already cogently challenged on the basis of competing experimental data by competing industry researchers, and on the basis of theoretical considerations (see \cite{An21}, \S 23: \textit{The significance of evidence-based reasoning for quantum computing}).} by a 78-member team of researchers, to have successfully reached the threshold of quantum supremacy by \href{https://static-content.springer.com/esm/art\%3A10.1038\%2Fs41586-019-1666-5/MediaObjects/41586\_2019\_1666\_MOESM1\_ESM.pdf}{\textit{building}}\footnote{Structured, prima facie, essentially as in Deutsch, \cite{Deu85} (see also Fiske, \cite{Fi19}; and \S \cite{An21}, 20.G: \textit{On the Collatz conjecture and similar, open, arithmetical problems}).} at Google AI Quantum, Mountain View, California, USA, a:
	
	\begin{quote}
		\footnotesize
		`\ldots high-fidelity processor capable of running quantum algorithms in an exponentially large computational space \ldots' \\ \textit{\tiny{\ldots Arute, Arya, Babbush, \textit{et al}: \cite{AAB19}, \S The future.}}
	\end{quote}

	\subsubsection{An \textit{evidence-based} separation of computational complexity}
	\label{sec:eb.sep.cmp.cmplxty}
	
	The preceding argumentation of \S \ref{sec:pnp.sprtn.prblm} suggests that a more natural separation of computational complexity---that takes into account aspects of human mathematical cognition which, even if admitted as capable of being \textit{evidenced} in what Markus Pantsar terms as `preformal mathematics' in \cite{Pan09}, may not be formalisable mathematically in terms of provable formulas---could be to distinguish between:
	
	\begin{itemize}
		\item[(i)] all that is algorithmically \textit{computable} by a deterministic Turing machine in \textit{polynomial} time; which does not include FACTORISATION\footnote{By \cite{An21}, \S 21.A.f, Theorem 21.16: \textit{P$\neq$NP since there are integers $n$ such that no deterministic algorithm can compute a prime factor of $n$ in polynomial-time}.} and SAT\footnote{By \S \ref{sec:lgcl.prf.p.neq.np}, Theorem \ref{thm:sat.prf.pvnp}.};
		
		\item[(ii)] all that is algorithmically \textit{computable} by a deterministic Turing machine in \textit{exponential} time; which includes FACTORISATION but does not include SAT;
		
		\item[(iii)] all that encompasses evidencing algorithmically \textit{verifiable} truths by meta-reasoning in \textit{finite} time; which includes SAT, since a human intelligence can evidence the algorithmically \textit{verifiable} truth of the G\"{o}del sentence $R(x)$ (by \S \ref{sec:7}, Corollary \ref{sec:6.3.cor.1.1})\footnote{See also \cite{An16}, Corollary 8.3: \textit{In any model of PA, G\"{o}del's arithmetical formula $[R(x)]$ interprets as an algorithmically \textit{verifiable}, but not algorithmically \textit{computable}, tautology over $\mathbb{N}$}.} by meta-reasoning in \textit{finite} time; reasoning which, however\footnote{By \cite{An21}, \S 20.E: \textit{Are you a man or a machine: A Definitive Turing Test}.}, is not admitted by any mechanistic intelligence whose architecture admits the classical Church-Turing thesis.
	\end{itemize}
	
	We conclude by noting that, prima facie, referencing a Turing Test\footnote{Such as \cite{An21}, \S 20.E, Query 21: \textit{Can you prove that, for any well-defined numeral $[n]$, G\"{o}del's arithmetic formula $[R(n)]$ is a theorem in the first-order Peano Arithmetic PA, where $[R(x)]$ is defined by its G\"{o}del number $r$ in eqn.12, and $[(\forall x)R(x)]$ is defined by its G\"{o}del number $17Gen\ r$ in eqn.13, on p.25 of \cite{Go31}? Answer only either `Yes' or `No'}.}, in para (iii) above, could admit aspects of human mathematical cognition such as those addressed by Pantsar in \cite{Pan19}; doing justice to these considerations, however, lies outside the scope and competence of this \textit{evidence-based} investigation:
	
	\begin{quote}
		\footnotesize
		``\ldots In a purely computational-level approach it is natural to assume that human competence can be modeled by optimal algorithms for solving mathematical problems, rather than studying empirically what kind of problem solving algorithms actual human reasoners use.
		
		\vspace{+1ex}
		While this computational-level approach has clear advantages, I submit that there should be limits to how strong and wide the application of the a priori computational methodology should be. As fruitful as the computational complexity paradigm may be, we should not dismiss the possibility that human competence in mathematical problem solving may indeed differ in important and systematic ways from the optimal algorithms studied in the computational complexity approach. In the rest of this paper, I will argue that by including considerations on the algorithmic level, we can get a more informative framework for studying the actual human problem solving capacity. Furthermore, I will show that the algorithmic-level approach does not move the discussion from competence to performance. Instead, we get a theoretical framework that is better-equipped for explaining human competence by including considerations of the algorithms that are cognitively optimal for human reasoners." \\ \textit{\tiny{\ldots Pantsar: \cite{Pan19}, \S 5, Complexity Within P and the Computational Paradigm.}}
	\end{quote}

	\section{Appendix: G\"{o}del's $\beta$-function}
	\label{gdl.beta.fn}
	
	\noindent We note that G\"{o}del's $\beta$-function is defined as (\cite{Me64}, p.131):
	
	\begin{itemize}
		\item[ ] $\beta (x_{1}, x_{2}, x_{3}) = rm(1+(x_{3}+ 1) \star x_{2}, x_{1})$
	\end{itemize}
	
	\noindent where $rm(x_{1}, x_{2})$ denotes the remainder obtained on dividing $x_{2}$ by $x_{1}$.
	
	\vspace{+1ex}
	\noindent We also note that:
	
	\begin{lemma}
		\label{sec:ch.2.2.2.lem.1}
		For any non-terminating sequence of values $f(0), f(1), \ldots$, we can construct natural numbers $b_{k},\ c_{k}$ such that:
		
		\begin{itemize}
			\item[(i)] $j_{k} = max(k, f(0), f(1), \ldots, f(k))$;
			
			\item[(ii)] $c_{k} = j_{k}$!;
			
			\item[(iii)] $\beta(b_{k}, c_{k}, i) = f(i)$ for $0 \leq i \leq k$.
		\end{itemize}
	\end{lemma}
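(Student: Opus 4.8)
\noindent The plan is to derive the lemma from the Chinese Remainder Theorem, in the manner of the classical construction underlying G\"{o}del's $\beta$-function. Fix $k$ and, as prescribed by (i) and (ii), set $j_{k} = \max(k, f(0), f(1), \ldots, f(k))$ and $c_{k} = j_{k}!$. For $0 \leq i \leq k$ introduce the moduli $m_{i} = 1 + (i+1)\cdot c_{k}$, and note that, by the definition of the $\beta$-function, $\beta(x, c_{k}, i) = rm(m_{i}, x)$, i.e.\ the remainder of $x$ on division by $m_{i}$; hence clause (iii) reduces to producing a single natural number $b_{k}$ whose residue modulo each $m_{i}$ equals $f(i)$.

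\noindent The first step is to verify that $m_{0}, m_{1}, \ldots, m_{k}$ are pairwise coprime. Suppose a prime $p$ divides both $m_{i}$ and $m_{j}$ with $i < j \leq k$; then $p \mid m_{j} - m_{i} = (j-i)\cdot c_{k}$. Since $p \mid m_{i} = 1 + (i+1)c_{k}$, $p$ cannot divide $c_{k}$ (else $p \mid 1$), so $p \mid j - i$. But $1 \leq j - i \leq k \leq j_{k}$, so $j - i$ divides $j_{k}! = c_{k}$, giving $p \mid c_{k}$, a contradiction. The second step is the bound $f(i) < m_{i}$ for each $i \leq k$: indeed $f(i) \leq j_{k} \leq j_{k}! = c_{k} \leq (i+1)c_{k} < 1 + (i+1)c_{k} = m_{i}$.

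\noindent With these two facts in hand, the Chinese Remainder Theorem applied to the pairwise-coprime $m_{0}, \ldots, m_{k}$ yields a natural number $b_{k}$---obtainable effectively, indeed by a primitive recursive procedure in the data $k, f(0), \ldots, f(k)$---with $b_{k} \equiv f(i) \pmod{m_{i}}$ for every $i \leq k$. Because $0 \leq f(i) < m_{i}$, the remainder of $b_{k}$ on division by $m_{i}$ is \textit{exactly} $f(i)$, so $\beta(b_{k}, c_{k}, i) = rm(m_{i}, b_{k}) = f(i)$, which is (iii); the pair $b_{k}$, $c_{k} = j_{k}!$ is the one sought.

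\noindent There is no genuine obstacle here---the lemma is a routine prerequisite---but two bookkeeping points deserve attention. First, the coprimality argument uses $k \leq j_{k}$, which is exactly why the maximum in (i) is taken over $k$ as well as over the values $f(0), \ldots, f(k)$; this guarantees that every difference $j - i$ with $i < j \leq k$ divides $j_{k}!$. Second, the bound $f(i) < m_{i}$ must be \textit{strict}, so that the residue of $b_{k}$ modulo $m_{i}$ is literally $f(i)$ and not merely congruent to it; both facts are immediate from the choices of $j_{k}$ and $c_{k}$.
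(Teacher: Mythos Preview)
Your proof is correct and is precisely the classical Chinese Remainder Theorem argument that underlies Mendelson's Proposition~3.22, which is all the paper invokes: its own ``proof'' is simply the one-line citation ``This is a standard result (\cite{Me64}, p.131, Proposition 3.22).'' You have supplied, in full and with the right care over the two delicate points (pairwise coprimality via $k\leq j_k$, and the strict bound $f(i)<m_i$), exactly the argument the paper defers to the literature.
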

	
	\noindent \textit{Proof} This is a standard result (\cite{Me64}, p.131, Proposition 3.22). \hfill $\Box$
	
	\vspace{+1ex}
	\noindent Now we have the standard definition (\cite{Me64}, p.118):
	
	\begin{definition}
		\label{sec:ch.2.2.1.def.1}
		A number-theoretic function $f(x_{1}, \ldots, x_{n})$ is said to be representable in the first order Peano Arithmetic PA if, and only if, there is a PA formula $[F(x_{1}, \dots, x_{n+1})]$ with the free variables $[x_{1}, \ldots, x_{n+1}]$, such that, for any specified natural numbers $k_{1}, \ldots, k_{n+1}$:
		
		\begin{itemize}
			\item[(i)] if $f(k_{1}, \ldots, k_{n}) = k_{n+1}$ then PA proves: $[F(k_{1}, \ldots, k_{n}, k_{n+1})]$;
			
			\item[(ii)] PA proves: $[(\exists_{1} x_{n+1})F(k_{1}, \ldots, k_{n}, x_{n+1})]$.
		\end{itemize}
		
		The function $f(x_{1}, \ldots, x_{n})$ is said to be strongly representable in PA if we further have that:
		
		\begin{itemize}
			\item[(iii)] PA proves: $[(\exists_{1} x_{n+1})F(x_{1}, \ldots, x_{n}, x_{n+1})]$. \hfill $\Box$
		\end{itemize}
	\end{definition}
	
	\noindent We also have that:
	
	\begin{lemma}
		\label{sec:ch.2.2.1.lem.1}
		$\beta(x_{1}, x_{2}, x_{3})$ is strongly represented in PA by $[Bt(x_{1}, x_{2}, x_{3}, x_{4})]$, which is defined as follows: 
		
		\begin{itemize}
			\item[ ] $[(\exists w)(x_{1} = ((1 + (x_{3} + 1)\star x_{2}) \star w + x_{4}) \wedge (x_{4} < 1 + (x_{3} + 1) \star x_{2}))]$.
		\end{itemize}
	\end{lemma}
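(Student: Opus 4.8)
The plan is to verify, one by one, the three clauses of Definition \ref{sec:ch.2.2.1.def.1} for the formula $[Bt(x_{1}, x_{2}, x_{3}, x_{4})]$, reading it as the assertion that $x_{4}$ is the remainder, and the witnessing $w$ a quotient, on dividing $x_{1}$ by the divisor $1 + (x_{3}+1)\star x_{2}$. The one structural fact I would record at the outset is that PA proves $[0 < 1 + (x_{3}+1)\star x_{2}]$, so that this divisor is always positive; this is immediate from the PA axioms PA$_{3}$ and PA$_{5}$--PA$_{8}$ together with the standard order lemmas of \cite{Me64}.

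First I would settle clause (i). Fix numerals $[k_{1}], [k_{2}], [k_{3}], [k_{4}]$ with $\beta(k_{1}, k_{2}, k_{3}) = k_{4}$, i.e.\ $k_{4} = rm(1 + (k_{3}+1)\cdot k_{2},\, k_{1})$, and let $q$ denote the corresponding quotient, so that $k_{1} = (1 + (k_{3}+1)\cdot k_{2})\cdot q + k_{4}$ and $k_{4} < 1 + (k_{3}+1)\cdot k_{2}$. Both of these are true statements between closed numerical terms, hence PA-provable --- the equation by iterated use of PA$_{5}$--PA$_{8}$ with equality substitution, and the inequality by the order lemmas of \cite{Me64}. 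Instantiating the bound variable $w$ by the numeral denoting $q$ and applying $\exists$-introduction then yields a PA-proof of $[Bt(k_{1}, k_{2}, k_{3}, k_{4})]$.

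The remaining clauses (ii) and (iii) both reduce to PA-provable \emph{existence} and \emph{uniqueness} of the witnessing $x_{4}$. For existence I would invoke the division-with-remainder theorem inside PA, namely that PA proves $[(\forall x)(\forall y)(0 < y \rightarrow (\exists u)(\exists v)(x = (y \star u) + v \wedge v < y))]$, the proof being induction on $[x]$: from $x = (y \star u) + v$ with $v < y$, either $v + 1 < y$, giving $x^{\prime} = (y \star u) + (v + 1)$, or $v + 1 = y$, giving $x^{\prime} = (y \star (u + 1)) + 0$. Instantiating $[y]$ by the term $1 + (x_{3}+1)\star x_{2}$ --- positive by the opening remark --- and re-quantifying the quotient variable as $w$ yields $[(\exists x_{4})Bt(x_{1}, x_{2}, x_{3}, x_{4})]$. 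For uniqueness I would use the PA-provable cancellation and trichotomy laws for $+$, $\star$ and $<$: if $(y \star w_{1}) + x_{4} = (y \star w_{2}) + x_{4}^{\prime}$ with $x_{4} < y$ and $x_{4}^{\prime} < y$, then taking cases on the $<$-order of $w_{1}$ and $w_{2}$, the strictly larger quotient would force the associated remainder to be $\geq y$, a contradiction, so $w_{1} = w_{2}$ and hence $x_{4} = x_{4}^{\prime}$ by cancellation on $+$. Combining existence with this uniqueness argument yields clause (iii), $[(\exists_{1} x_{4})Bt(x_{1}, x_{2}, x_{3}, x_{4})]$; specialising the free variables $[x_{1}], [x_{2}], [x_{3}]$ to numerals then gives clause (ii).

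The hard part will be bookkeeping rather than anything conceptual: marshalling the elementary PA facts the argument leans on --- chiefly the PA-provability of the division algorithm (an induction) and of the cancellation and trichotomy laws underlying uniqueness. All of these are catalogued in \cite{Me64} (in particular Propositions 3.6 and 3.22 and the accompanying lemmas on $<$), so the proof proper will amount to citing that development and carrying out the two short $\exists$-introductions described above; no genuinely new arithmetic is required.
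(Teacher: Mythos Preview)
Your proposal is correct: you verify the three clauses of Definition~\ref{sec:ch.2.2.1.def.1} by reducing to the PA-provable division-with-remainder theorem and its uniqueness half, which is exactly the content underlying this lemma.

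The paper, however, does not carry out any of this. Its entire proof reads: \emph{``This is a standard result (\cite{Me64}, p.131, proposition 3.21).''} So you have supplied a detailed sketch of what the cited reference contains, whereas the paper is content to delegate the argument wholesale to Mendelson. One small bookkeeping point: the precise reference is Proposition~3.21 of \cite{Me64}, not 3.22 (which is the statement you use for Lemma~\ref{sec:ch.2.2.2.lem.1}); otherwise your citations to the order and cancellation lemmas in \cite{Me64} are apt, and nothing in your outline is mathematically suspect.
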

	
	\textit{Proof} This is a standard result (\cite{Me64}, p.131, proposition 3.21). \hfill $\Box$


\begin{thebibliography}{Go31a}
	\footnotesize{	
		
		\bibitem[AAB19]{AAB19} Frank Arute, Kunal Arya, Ryan Babbush, \textit{et al}.\ 2019.\ \textit{Quantum supremacy using a programmable superconducting processor.} (Corresponding author: John M. Martinis.) In \textit{Nature}, \textbf{574}, 505-510 (2019). \\ \href{https://datadryad.org/stash/dataset/doi:10.5061/dryad.k6t1rj8}{\tiny{https://datadryad.org/stash/dataset/doi:10.5061/dryad.k6t1rj8}}
		
		\bibitem[An04]{An04} Bhupinder Singh Anand.\ 2004.\ {\textit{Do G\"{o}del's incompleteness theorems set absolute limits on the ability of the brain to express and communicate mental concepts verifiably?}}\ Invited article in the Special Issue on {\textit{Minds, Brains and Mathematics}}. In \textit{NeuroQuantology}, [S.l.], v.2, n.2, 2004, pp.60-100. ISSN 1303-5150. DOI: 10.14704/nq.2004.2.2.39. \\ \href{https://www.neuroquantology.com/data-cms/articles/20191023051331pm39.pdf}{\tiny{https://www.neuroquantology.com/data-cms/articles/20191023051331pm39.pdf}}
		
		\bibitem[An12]{An12} \ldots\ 2012.\ {\textit{Evidence-Based Interpretations of PA}}.\ In \textit{Proceedings of the Symposium on Computational Philosophy} at the AISB/IACAP World Congress 2012-Alan Turing 2012, 2-6 July 2012, University of Birmingham, Birmingham, UK. \\ \href{http://events.cs.bham.ac.uk/turing12/proceedings/04.pdf}{\tiny{http://events.cs.bham.ac.uk/turing12/proceedings/04.pdf}}
		
		\bibitem[An16]{An16} \ldots\ 2016.\ {\textit{The truth assignments that differentiate human reasoning from mechanistic reasoning: The evidence-based argument for Lucas' G\"{o}delian thesis.}} In \textit{Cognitive Systems Research}. Volume 40, December 2016, 35-45. \\ \href{http://dx.doi.org/10.1016/j.cogsys.2016.02.004}{\tiny{doi:10.1016/j.cogsys. 2016.02.004}.}
		
		\bibitem[An21]{An21} \ldots\ 2021.\ \textit{The Significance of Evidence-based Reasoning in Mathematics, Mathematics Education, Philosophy, and the Natural Sciences.} Second edition, 2022 (Forthcoming). Limited First (Print) Edition archived at PhilPapers \href{https://philpapers.org/rec/ANATSO-5}{\textit{here}}. \\ \href{https://philpapers.org/rec/ANATSO-4}{\textit{\tiny{https://philpapers.org/rec/ANATSO-4}}}
		
		\bibitem[Br08]{Br08} Luitzen Egbertus Jan Brouwer.\ 1908.\ \textit{The Unreliability of the Logical Principles}.\ English translation in A.\ Heyting, Ed.\ L.\ E.\ J.\ Brouwer: Collected Works 1: \textit{Philosophy and Foundations of Mathematics}.\ Amsterdam: North Holland / New York: American Elsevier (1975): pp.\ 107-111.
		
		\bibitem[Br13]{Br13} \ldots\ 1913.\ \textit{Intuitionism and Formalism.} Inaugural address at the University of Amsterdam, October 14, 1912. Translated by Professor Arnold Dresden for the Bulletin of the American Mathematical Society, Volume 20 (1913), pp.81-96. 1999. Electronically published in Bulletin (New Series) of the American Mathematical Society, Volume 37, Number 1, pp.55-64.
		
		\bibitem[BW05]{BW05} Henk Barendregt and Freek Wiedijk.\ 2005.\ \textit{The challenge of computer mathematics.} In \textit{Philosophical Transactions of The Royal Society A}, Volume 363, pp.2351-2375. \\ \href{https://royalsocietypublishing.org/doi/pdf/10.1098/rsta.2005.1650}{\tiny{https://royalsocietypublishing.org/doi/pdf/10.1098/rsta.2005.1650}}
		
		\bibitem[CCS01]{CCS01}	Cristian S.\ Calude, Elena Calude and Solomon Marcus.\ 2001.\ \textit{Passages of Proof}. Workshop, Annual Conference of the Australasian Association of Philosophy (New Zealand Division), Auckland. \\ \href{http://arxiv.org/pdf/math/0305213.pdf}{\tiny http://arxiv.org/pdf/math/0305213.pdf}
		
		\bibitem[CGK14]{CGK14} Marek Czarnecki, Micha\l Tomasz Godziszewski, and Dariusz Kaloci\'{n}ski.\ 2014.\ \textit{Learnability Thesis Does Not Entail Church's Thesis.} In: Beckmann A., Csuhaj-Varj\'{u} E., Meer K. (eds) \textit{Language, Life, Limits}. CiE 2014. Lecture Notes in Computer Science, vol 8493. Springer, Cham. https://doi.org/10.1007/978-3-319-08019-2\_12 \\ \href{https://www.academia.edu/15120029/Learnability\_Thesis\_Does\_Not\_Entail\_Churchs\_Thesis}{\tiny{https://www.academia.edu/15120029/Learnability\_Thesis\_Does\_Not\_Entail\_Churchs\_Thesis}}
		
		\bibitem[Cook]{Cook} Stephen Cook.\ 2000.\ {\textit{The P versus NP Problem}}.\ Official description provided for the Clay Mathematical Institute, Cambridge, Massachusetts. \\ \href{http://www.claymath.org/sites/default/files/pvsnp.pdf}{\tiny http://www.claymath.org/sites/default/files/pvsnp.pdf}
		
		\bibitem[Da82]{Da82} Martin Davis.\ 1958.\ \textit{Computability and Unsolvability}.\ 1982 ed.\ Dover Publications, Inc., New York.
		
		\bibitem[Deu85]{Deu85} David Deutsch.\ 1985.\ \textit{Quantum Theory, the Church-Turing principle and the universal computer.} In \textit{Proceedings of the Royal Society A: Mathematical, Physical and Engineering Sciences}, Volume 400, Issue 1818, 8 July 1985, pp.97-117. Royal Society, UK. \\ \href{https://www.cs.princeton.edu/courses/archive/fall04/cos576/papers/deutsch85.pdf}{\tiny{https://www.cs.princeton.edu/courses/archive/fall04/cos576/papers/deutsch85.pdf}}
		
		\bibitem[Fe06]{Fe06} Solomon Feferman.\ 2006.\ {\textit{Are There Absolutely Unsolvable Problems? G\"{o}del's Dichotomy.}}\ Philosophia Mathematica (2006) 14 (2): 134-152. \\ \href{http://math.stanford.edu/~feferman/papers/dichotomy.pdf}{\tiny{http://math.stanford.edu/~feferman/papers/dichotomy.pdf}}
		
		\bibitem[Fe08]{Fe08} \ldots\ 2008.\ {\textit{Lieber Herr Bernays!, Lieber Herr G\"{o}del! G\"{o}del on finitism, constructivity and Hilbert's program.}}\ in \textit{Special Issue: G\"{o}del's dialectica Interpretation} Dialectica, Volume 62, Issue 2, June 2008, pp. 245-290. \\ \href{http://math.stanford.edu/~feferman/papers/bernays.pdf}{\tiny{http://math.stanford.edu/~feferman/papers/bernays.pdf}}
		
		\bibitem[Fi19]{Fi19} Michael Stephen Fiske.\ 2019.\ \textit{Quantum Random Self-Modifiable Computation}. \textit{Paper} presented at the \textit{Logic Colloquium 2019}, European Summer Meeting of the Association for Symbolic Logic, August 11-16, Prague, Czech Republic. \\ \href{https://arxiv.org/pdf/1807.01369.pdf}{\tiny{https://arxiv.org/pdf/1807.01369.pdf}}
		
		\bibitem[Fre18]{Fre18} Rodrigo A. Freire.\ 2018.\ \textit{Interpretation and Truth in Set Theory}. In Eds. Walter Carnielli and Jacek Malinowski, \textit{Contradictions, from Consistency to Inconsistency}, pp.183-205. Part of Trends in Logic, Studia Logica Library, SpringerLink. \\ \href{https://link.springer.com/chapter/10.1007/978-3-319-98797-2\_9}{\tiny{https://link.springer.com/chapter/10.1007/978-3-319-98797-2\_9}}
		
		\bibitem[Frt09]{Frt09} Lance Fortnow.\ 2009.\ \textit{The Status of the P Versus NP Problem}.\ Communications of the ACM, Volume 52, Issue 9 (September 2009).
		
		\bibitem[Go31]{Go31} Kurt G\"{o}del.\ 1931.\ \textit{On formally undecidable propositions of Principia Mathematica and related systems I}.\ Translated by Elliott Mendelson.\ In M.\ Davis (ed.).\ 1965.\ \textit{The Undecidable}.\ Raven Press, New York.
		
		\bibitem[Hi00]{Hi00} David Hilbert.\ 1900.\ \textit{Mathematical Problems}.\ An address delivered before the \textit{International Congress of Mathematicians} at Paris in 1900. The French translation by M. L. Laugel ``\textit{Sur les probl\`{e}mes futurs des math\'{e}matiques}" appeared in \textit{Compte Rendu du Deuxi\`{e}me Congr\`{e}s International des Math\'{e}maticiens}, pp. 58-114, Gauthier-Villars, Paris, 1902. Dr. Maby Winton Newson translated this address into English with the author's permission for \textit{Bulletin of the American Mathematical Society} 8 (1902), 437-479. A reprint appears in \textit{Mathematical Developments Arising from Hilbert's Problems}, edited by Felix Brouder, American Mathematical Society, 1976. The original address ``\textit{Mathematische Probleme}" appeared in \textit{G\"{o}ttinger Nachrichten}, 1900, pp. 253-297, and in \textit{Archiv der Mathematik und Physik}, (3) 1 (1901), 44-63 and 213-237. An HTML version of Dr. Newson's translation was prepared for the web by D. Joyce with only minor modifications, mainly, more complete references. \\ \href{https://mathcs.clarku.edu/~djoyce/hilbert/problems.html\#prob2}{\tiny{https://mathcs.clarku.edu/~djoyce/hilbert/problems.html\#prob2}}
		
		\bibitem[Hi27]{Hi27} \ldots\ 1927.\ \textit{The Foundations of Mathematics}.\ Text of an address delivered in July 1927 at the Hamburg Mathematical Seminar.\ In Jean van Heijenoort.\ 1967.\ Ed.\ \textit{From Frege to G\"{o}del: A source book in Mathematical Logic, 1878 - 1931}.\ Harvard University Press, Cambridge, Massachusetts.
		
		\bibitem[Kl52]{Kl52} Stephen Cole Kleene.\ 1952.\ \textit{Introduction to Metamathematics}.\ North Holland Publishing Company, Amsterdam.
		
		\bibitem[La29]{La29} Yehezkel-Edmund Landau.\ 1929.\ \textit{Foundations of Analysis}. 1951. Translation into English by F. Steinhardt of the German-language book \textit{Grundlagen Der Analysis} by Edund Landau. 2nd Edn. 1960. Chelsea Publishing Company, New York.
		
		\bibitem[Lo15]{Lo15} Leopold L\"{o}wenheim.\ 1915.\ \textit{On possibilities in the calculus of relatives}.\ In Jean van Heijenoort.\ 1967.\ Ed.\ \textit{From Frege to G\"{o}del: A source book in Mathematical Logic, 1878 - 1931}.\ Harvard University Press, Cambridge, Massachusetts.
		
		\bibitem[Lob59]{Lob59} Martin H. L\"{o}b.\ 1959.\ \textit{Constructive Truth}.\ In Heyting, A.\ (ed.) \textit{Constructivity in Mathematics}.\ North-Holland, Amsterdam.
		
		\bibitem[LR00]{LR00} George Lakoff and Rafael E. N\'{u}\~{n}ez.\ 2000.\ \textit{Where Mathematics Comes From}. Basic Books, NY, USA.
		
		\bibitem[Me64]{Me64} Elliott Mendelson.\ 1964.\ \textit{Introduction to Mathematical Logic}. First edition. Van Norstrand, Princeton.
		
		\bibitem[Me15]{Me15} \ldots\ 2015.\ \textit{Introduction to Mathematical Logic}. Sixth edition. Chapman \& Hall, London, UK.
		
		\bibitem[Mor12]{Mor12} Cristopher Moore.\ 2012.\ \textit{P vs. NP, Phase Transitions, and Incomputability in the Wild}. \href{http://tuvalu.santafe.edu/~moore/turing-talk.pdf}{Presentation} given on 18$^{th}$ June 2012 at the Turing Centenary Workshop on \textit{The Incomputable} at the Kavli Royal Society International Center at Chicheley Hall, Chicheley, Buckinghamshire, UK.
		
		\bibitem[Mu91]{Mu91} Chetan R. Murthy.\ 1991. \textit{An Evaluation Semantics for Classical Proofs.} Proceedings of Sixth IEEE Symposium on Logic in Computer Science, pp. 96-109, (also Cornell TR 91-1213), 1991. \\ \href{http://www.nuprl.org/documents/Murthy/EvaluationSemantics.pdf}{\tiny{http://www.nuprl.org/documents/Murthy/EvaluationSemantics.pdf}}
		
		\bibitem[Pa71]{Pa71} Rohit Parikh.\ 1971.\ \textit{Existence and Feasibility in Arithmetic}.\ In \textit{The Journal of Symbolic Logic}, Vol.36, No. 3 (Sep., 1971), pp. 494-508.
		
		\bibitem [Pan09]{Pan09} Markus Pantsar.\ 2009.\ \textit{Truth, Proof and G\"{o}delian Arguments: A Defence of Tarskian Truth in Mathematics.} In Eds. Marjaana Kopperi, Panu Raatikainen, Petri Ylikoski, and Bernt \"{O}sterman, \textit{Philosophical Studies from the University of Helsinki 23}, Department of Philosophy, University of Helsinki, Finland. \\ \href{https://helda.helsinki.fi/bitstream/handle/10138/19432/truthpro.pdf?sequence=2}{\tiny{https://helda.helsinki.fi/bitstream/handle/10138/19432/truthpro.pdf?sequence=2}}
		
		\bibitem [Pan19]{Pan19} \ldots\ 2019.\ \textit{Cognitive and Computational Complexity: Considerations from Mathematical Problem Solving.} In \textit{Erkenntnis} (2019), Springer Nature, Switzerland. \\ \href{https://doi.org/10.1007/s10670-019-00140-3}{\tiny{https://doi.org/10.1007/s10670-019-00140-3}}	
		
		\bibitem[Ra02]{Ra02} Ran Raz.\ 2002.\ \textit{P$\neq$NP, Propositional Proof Complexity, and Resolution Lower Bounds for the Weak Pigeonhole Principle}.\ ICM 2002, Vol. III, 1-3.
		
		\bibitem[Ro36]{Ro36} J.\ Barkley Rosser.\ 1936.\ \textit{Extensions of some Theorems of G\"{o}del and Church}.\ In M.\ Davis (ed.).\ 1965.\ \textit{The Undecidable}.\ Raven Press, New York.\ Reprinted from \textit{The Journal of Symbolic Logic}.\ Vol.1.\ pp.87-91.
		
		\bibitem[Ro53]{Ro53} ...\ 1953. \textit{Logic for Mathematicians.} McGraw Hill, New York.
		
		\bibitem[Sk22]{Sk22} Thoralf Skolem.\ 1922.\ \textit{Some remarks on axiomatized set theory}.\ Text of an address delivered in Helsinki before the Fifth Congress of Scandinavian Mathematicians, 4-7 August 1922.\ In Jean van Heijenoort.\ 1967.\ Ed.\ \textit{From Frege to G\"{o}del: A source book in Mathematical Logic, 1878 - 1931}.\ Harvard University Press, Cambridge, Massachusetts.
		
		\bibitem[Ta35]{Ta35} Alfred Tarski. 1935. \textit{Der Wahrheitsbegriff in den formalisierten Sprachen}. Studia Philos., Vol. 1, pp.261-405. Expanded English translation, {\textit{The concept of truth in formalized languages}}. In \textit{Logic, Semantics, Metamathematics}, papers from 1923 to 1938 (p152-278). ed. John Corcoran. 1983. Hackett Publishing Company, Indianapolis. \\ \href{https://www.academia.edu/39721673/Alfred\_Tarski\_and\_the\_Concept\_of\_Truth\_in\_Formalized\_Languages}{\tiny{https://www.academia.edu/39721673/Alfred\_Tarski\_and\_the\_Concept\_of\_Truth\_in\_Formalized\_Languages}}
		
		\bibitem[Tu36]{Tu36} Alan Turing. 1936.\ \textit{On computable numbers, with an application to the Entscheidungsproblem.} In M. Davis (ed.). 1965. \textit{The Undecidable}. Raven Press, New York. Reprinted from the \textit{Proceedings of the London Mathematical Society}, ser. 2. vol. 42 (1936-7), pp.230-265; corrections, Ibid, vol 43 (1937) pp.\ 544-546.
		
		\bibitem[We27]{We27} Hermann Weyl.\ 1927.\ \textit{Comments on Hilbert's second lecture on the foundations of mathematics}.\ In Jean van Heijenoort.\ 1967.\ Ed.\ \textit{From Frege to G\"{o}del: A source book in Mathematical Logic, 1878 - 1931}.\ Harvard University Press, Cambridge, Massachusetts.
	}
\end{thebibliography}
\end{document}